\documentclass[11pt,twoside]{amsart}
\usepackage{amsmath}
\usepackage{amsthm}
\usepackage{aliascnt}
\usepackage{amsfonts}
\usepackage{amssymb}
\usepackage{dutchcal}
\usepackage{mathrsfs}
\usepackage{mathtools}
\usepackage{upgreek}
\DeclareMathAlphabet{\mathcal}{OMS}{cmsy}{m}{n}

\usepackage{enumitem}
\usepackage{indentfirst}
\usepackage{ulem}
\usepackage{tikz}
\usepackage{tikz-cd}	
\usepackage{array}  
\newcolumntype{L}{>{$}l<{$}} 
\usepackage{verbatim}
\usepackage[colorlinks=true,
linkcolor=black,
citecolor=black,
urlcolor=black]{hyperref}
\usepackage[capitalize]{cleveref}
\makeatletter
\renewcommand{\eqref}[1]{\textup{(\ref{#1})}}
\makeatother

\usepackage{lipsum}
\usepackage{comment}
\usepackage{adjustbox}
\usepackage{scalerel}
\usepackage{marginnote}
\usepackage[toc]{appendix} 
\usepackage[style = numeric, doi=false,isbn=false,url=false, giveninits=true, maxalphanames=10,maxbibnames=99]{biblatex}
\usepackage{microtype}
\usepackage{xcolor}

\setlist[enumerate]{
	leftmargin=*,
	labelsep=0.5em
}

\usepackage[]{todonotes}

\def \ix {I_{\scaleto{X}{4pt}}}
\def \zs {Z_{\scaleto{S}{4pt}}}
\def \lx {\mathbb{L}_{\scaleto{X}{4pt}}}
\def \ly {\mathbb{L}_{\scaleto{Y}{4pt}}}
\def \hx {h_{\scaleto{X}{4pt}}}
\def \hy {h_{\scaleto{Y}{4pt}}}
\def \gx {g_{\scaleto{X}{4pt}}}
\def \gy {g_{\scaleto{Y}{4pt}}}
\def \tmcc {\widetilde{\mathcal{C}}}
\def \tcl {\widetilde{C}_l}
\def \tpl {\widetilde{P}_l}
\def \lhom {\textup{hom}}

\def \sing {\textup{Sing} }
\def \ts {\widetilde{S}}

\def \mcs {\mathcal{S}}
\def \mcy {\mathcal{Y}}
\def \mcz {\mathcal{Z}}
\def \mcc {\mathcal{C}}
\def \mbc {\mathbb{C}}
\def \mbp {\mathbb{P}}
\def \mso {\mathcal{O}}
\def \mcf {\mathcal{F}}
\def \mbq {\mathbb{Q}}
\def \mbp {\mathbb{P}}

\def \mbz {\mathbb{Z}}

\def \pic {\textup{Pic}}

\def \ch {\textup{CH}}
\def \ord {\textup{ord}}
\def \ra {\rightarrow}

\renewbibmacro{in:}{} 
\DeclareFieldFormat{pages}{#1}

\newtheorem{thm}{Theorem}[section]

\newaliascnt{prop}{thm}
\newtheorem{prop}[prop]{Proposition}
\aliascntresetthe{prop}

\newaliascnt{lem}{thm}
\newtheorem{lem}[lem]{Lemma}
\aliascntresetthe{lem}

\newaliascnt{cor}{thm}
\newtheorem{cor}[cor]{Corollary}
\aliascntresetthe{cor}

\theoremstyle{definition}
\newaliascnt{defn}{thm}
\newtheorem{defn}[defn]{Definition}
\aliascntresetthe{defn}

\newaliascnt{rmk}{thm}
\newtheorem{rmk}[rmk]{Remark}
\aliascntresetthe{rmk}

\newaliascnt{ex}{thm}
\newtheorem{ex}[ex]{Example}
\aliascntresetthe{ex}

\newaliascnt{conj}{thm}

\aliascntresetthe{conj}

\newaliascnt{quest}{thm}
\newtheorem{quest}[quest]{Question}
\aliascntresetthe{quest}
\Crefname{thm}{Theorem}{Theorems}
\Crefname{prop}{Proposition}{Propositions}
\Crefname{lem}{Lemma}{Lemmas}
\Crefname{cor}{Corollary}{Corollaries}
\Crefname{defn}{Definition}{Definitions}
\Crefname{rmk}{Remark}{Remarks}
\Crefname{ex}{Example}{Examples}
\Crefname{conj}{Conjecture}{Conjectures}
\Crefname{quest}{Question}{Questions}

\title{Constant Cycle Surfaces on Fano Varieties of Cubic Fourfolds}
\author{Jiexiang Huang} 
\thanks{The author is supported by the ERC Synergy Grant HyperK (ID 854361).}
\hypersetup{colorlinks = true, linkcolor=black}

\begin{document}
	\begin{abstract}
	Huybrechts proved the finiteness  of constant cycle curves of  fixed order in any linear system $|L|$ on a K3 surface. In this paper, we  study constant cycle surfaces on the Fano variety of lines $F(X)$ of a smooth cubic fourfold $X$. Fano surfaces $F(Y) \subset F(X)$ of hyperplane sections $Y \subset X$ are higher-dimensional analogues of curves on K3 surfaces. We prove that there are at most finitely many constant cycle surfaces of the form $F(Y)$ of any fixed order on $F(X)$.
	\end{abstract}
\maketitle
\tableofcontents

\section{Introduction}

Constant cycle subvarieties on a complex hyperk\"ahler variety $Z$  are  subvarieties whose points are all rationally equivalent in $Z$. Such subvarieties may be viewed as   Chow-theoretic generalizations of rational subvarieties, and are particularly interesting because the Chow group  $\ch_0(Z)$ of zero-cycles is infinite-dimensional \cite{Mumford69surface, Roitman1971gamma}. A constant cycle subvariety $\Gamma \subset Z$ has dimension at most  $\frac{1}{2}\dim(Z)$; if equality holds, the subvariety $\Gamma$ is Lagrangian, and  $Z$ contains at most countably many such subvarieties \cite{Voisin2016}. 

\subsection{Background and Motivation} K3 surfaces are the simplest hyperk{\"a}hler varieties. Constant cycle curves on K3 surfaces have been studied systematically  by Huybrechts \cite{Huybrechts2014}. Typical examples include rational curves and fixed curves of non-symplectic automorphisms. By \cite{voisin2015rational}, every point of a constant cycle curve on a K3 surface $T$ represents the Beauville--Voisin class $o_T \in \ch_0(T)$ constructed in \cite{BeauvilleVoisin2004}.

A key notion of a constant cycle curve $C \subset T$, introduced by Huybrechts, is its \textit{order} $\ord(C)$: a curve $ C \subset T$ is a constant cycle curve precisely when the diagonal  class  $[\Delta_C] \in \ch_1(T \times C)$ becomes decomposable up to an integer multiple, and $\ord(C)$ is defined as the minimal such positive integer. 
Rational curves on K3 surfaces are constant cycle curves of order one \cite{Huybrechts2014}, and every positive integer can be realized as the order of a smooth elliptic constant cycle curve on some Kummer surface \cite{Huang2025EllipticCCCKummer}. 
While a linear system $|L|$ may contain infinitely  many constant cycle curves \cite[Sec.~6.3]{Huybrechts2014}, one has the following finiteness theorem for fixed order:
\begin{thm}[{\cite[Prop.~5.1]{Huybrechts2014}}] \label{fin_ccc}
	For a complex projective K3 surface $T$, there are at most finitely many constant cycle curves $C \subset T$ of fixed order $N$ in any linear system $|L|$.
\end{thm}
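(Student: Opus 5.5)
Recall Huybrechts' argument. The approach is to show that, inside the linear system $|L| \cong \mathbb{P}^n$, the locus of curves $C$ with $N\cdot[\Delta_C]$ decomposable is a countable union of closed subsets, and that each of these subsets is finite.

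\textbf{Step 1: reformulate order $N$ as vanishing of a cycle on the universal curve.} Let $\mathcal{C} \to |L|$ be the universal curve, with $\mathcal{C} \subset T \times |L|$. For a point $t \in |L|$ and fiber $C_t$, the condition that $C_t$ is a constant cycle curve of order dividing $N$ says that
\[
N\bigl([\Delta_{C_t}] - [\{c\}\times C_t]\bigr) = 0 \quad \text{in } \ch_1(T\times C_t)_{\mathbb{Q}}.
\]
Because constant cycle curves have all points equal to $o_T$, one may take $c$ with $[c] = o_T$. Equivalently, the class $N\bigl(\Delta - o_T \times \mathcal{C}\bigr)$ restricts to zero integrally on the fiber $T \times C_t$, where $\Delta \subset T \times_{} \mathcal{C}$ is the relative diagonal.

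\textbf{Step 2: the vanishing locus is a countable union of closed subsets.} By the standard Hilbert-scheme argument (as in Voisin's or Bloch--Srinivas's spreading), the set of $t$ at which a fixed relative cycle restricts to zero in Chow is a countable union of closed algebraic subsets $Z_i \subset |L|$. One must also restrict to the open loci where the fibers are reduced/integral, or handle non-integral curves componentwise.

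\textbf{Step 3: rigidity, i.e.\ no positive-dimensional $Z_i$.} Suppose some $Z_i$ contains an irreducible curve $B \subset |L|$ whose general fiber is an order-$N$ constant cycle curve. Over $B$ the vanishing spreads: after a base change $B' \to B$, the relative cycle $N(\Delta - o_T\times\mathcal{C}_{B'})$ vanishes on a nonempty open subset of $T \times \mathcal{C}_{B'}$ restricted over $B'$. Pushing forward along $\mathcal{C}_{B'} \to T$ (dominant, since curves in a moving one-parameter family cover $T$) then forces every point of $T$ to be rationally equivalent to $o_T$, up to torsion and an integer multiple. Concretely, the correspondence $\mathcal{C}_{B'} \to T$ acts on $\ch_0$ by composing with $\Delta$, giving $N\deg\cdot(x - o_T)=0$ for general $x \in T$. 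Roitman's theorem then says $\ch_0(T)_{\hom}$ is torsion-free, so the multiple $N$ can be discarded. We conclude $\ch_0(T)=\mathbb{Z}$, contradicting Mumford's theorem since $h^{2,0}(T)\neq 0$.

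Thus each $Z_i$ is finite, but there are countably many. \textbf{Step 4, the main obstacle, is passing from countable to finite.} Here I would use that for fixed $N$ the decomposition can be realized with bounded data. The expectation is that $N[\Delta_C] = N[c\times C] + (\text{something from } T\times\{pt\})$ holds on the nose after fixing $c$ with $[c]=o_T$, so only one relative cycle is involved. Its vanishing locus is then the countable union from Step 2, and this union is constructible in the sense that the Chow variety parametrizing rational equivalences is a countable union of quasi-projective pieces. To cut down, I would argue that the locus is closed under specialization in $|L|$ (a limit of order-$N$ curves gives a vanishing cycle on the limit fiber). Combined with Step 3, the locus is then a countable set closed under specialization with no positive-dimensional components. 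A Baire-type argument is not enough here, so instead I would bound the degree of the rational equivalence: the relation $N(\Delta_C - c\times C)\sim 0$ is witnessed by curves on $T\times C \times \mathbb{P}^1$ of bounded degree depending on $N$ and $L$. That gives a single finite-type parameter space, whose image in $|L|$ is constructible and, by Step 3, finite. Establishing this degree bound is the step I expect to be delicate, and it is where I would follow Huybrechts' approach.
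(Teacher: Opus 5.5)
Your Step~4 is a genuine gap, and it carries the whole content of the theorem. Steps~1--3 only show that the order-$N$ locus in $|L|$ is a countable union of closed subsets with no positive-dimensional member. (Step~3 is in fact easier than you make it: a one-parameter family of constant cycle curves in $|L|$ sweeps out $T$, which forces $\ch_0(T)=\mbz$.)

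The route you propose for upgrading ``countable'' to ``finite'' does not work. There is no uniform degree bound for the rational equivalences involved, and Chow-theoretic vanishing loci are in general genuinely countable rather than constructible. For example, $\{x\in T\mid [x]=o_T\}$ contains every rational curve on $T$. It is therefore not constructible as soon as $T$ has infinitely many of them, and no finite-type family of rational equivalences can witness it. ``Closed under specialization'' is vacuous for a countable set of closed points.

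Step~1 is also misstated. $\ord(C)\mid N$ means that $N([\Gamma]-o_T\times[\widetilde C])$ is supported over finitely many points of $\widetilde C$, i.e.\ lies in the image of $\ch^1(T)\otimes\ch^1(\widetilde C)$; it does not mean that it vanishes. Moreover, with $\mbq$-coefficients the order is invisible.

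The missing idea is Hodge-theoretic; it is the strategy the paper attributes to Huybrechts and adapts in \Cref{sec_finiteness}.

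First, subtract product cycles from the graph:
$Z_C:=[\Gamma]-o_T\times[\widetilde C]-\sum_i\frac{1}{L_i^2}L_i\times f^\ast L_i$, for an orthogonal $\mbq$-basis $\{L_i\}$ of $\mathrm{NS}(T)$. With $M:=\prod_i L_i^2$, the cycle $MZ_C$ is integral and cohomologically trivial. Moreover, $\ord(C)=N$ forces $N\cdot M^2Z_C=0$ in $\ch^2(T\times\widetilde C)$ (compare \Cref{cri_AJ_Cl}). So the order is detected, up to a constant independent of $C$, as a torsion order in the intermediate Jacobian.

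Next, stratify $|L|$ and resolve the universal curve simultaneously over each stratum $B$. The cycles $M^2Z_C$ then spread to a fibrewise cohomologically trivial cycle $\mathcal{Z}$ with normal function $\nu$. The zero locus of $N\nu$ contains the order-$N$ locus and is Zariski closed by Brosnan--Pearlstein and Schnell, hence has finitely many components.

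None of these components is positive-dimensional. Over every curve $T_0\subset B$ the class $[\mathcal{Z}|_{T_0}]$ is nonzero, and the Leray boundary map is injective over a one-dimensional affine base (\Cref{spec_bound_inj}). Hence $\delta\nu=d[\mathcal{Z}]\neq0$; this is \Cref{fini_tor}.

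Your Mumford-type rigidity in Step~3 cannot replace this last step, since the zero locus of $\nu$ may be strictly larger than the Chow-theoretic locus.
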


This naturally raises the following question:
\begin{quest}
	Does an analogous finiteness result hold for constant cycle Lagrangian subvarieties on higher-dimensional hyperk\"ahler varieties?
\end{quest}  

In this paper, we answer this question for Fano varieties of lines $F(X)$ of smooth cubic fourfolds $X$. These are hyperkähler fourfolds of $\mathrm{K3}^{[2]}$-type, i.e.\ deformation equivalent to Hilbert schemes of two points of K3 surfaces \cite{BeauvilleDonagi}, and share many features with K3 surfaces. For example, the Chow group $\ch_0(F(X))$  contains a canonical degree-one class $o_F$ \cite{Voisin2008}, which is an analogue of the Beauville--Voisin class on a K3 surface, and every point lying on a constant cycle surface  represents the class $o_F$ \cite{ShenYin2020}.

\subsection{Main results}
We first extend the basic theory of constant cycle curves on K3 surfaces to constant cycle surfaces on $F(X)$; see \Cref{ccs_generaltheory} for details. In particular, for a constant cycle surface $S \subset F(X)$, we define its  \textit{order} $\ord(S)$ to be the minimal positive integer $N$ such that the diagonal class $[\Delta_S] \in \ch^4(F(X) \times S)$ admits a Bloch--Srinivas  decomposition after multiplication by $N$.  Rational surfaces are constant cycle surfaces of order one. 
In general,  determining the order of a non-rational constant cycle surface is difficult.

To formulate an analogous finiteness theorem for constant cycle surfaces on $F(X)$, we restrict to the  family of Fano varieties of lines $F(Y) \subset F(X)$ of hyperplane sections $Y \subset X$. These surfaces  on $F(X)$ play a role similar to  that of curves in a  linear system on a K3 surface: they are Lagrangian \cite{Voisin_1992}, have ample normal bundles for generic $Y \in |\mso_X(1)|$ \cite{Voisin2010coniveau}, and the class  $[F(Y)] \in \ch^2(F(X))$ is independent of $Y$; see also  \cite[Sec.~6.4.3]{Huybrechts_cubicbook} for details.

Our main result is the following.

\begin{thm} \label{main_thm_intro}
	Let $X$ be a smooth cubic fourfold, and let $F(X)$ be its Fano variety of lines. For each integer $N > 0$, there exist at most finitely many hyperplane sections $Y \subset X$ such that the Fano surface $F(Y)$ is a constant cycle surface of order $N$ on $F(X)$. 
\end{thm}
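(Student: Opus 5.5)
We will follow the strategy of Huybrechts' proof of \Cref{fin_ccc}: show that the locus of hyperplane sections $Y$ for which $F(Y)$ is a constant cycle surface of order dividing $N$ is Zariski closed in $|\mso_X(1)|$, and that it is contained in a countable union of proper subvarieties. A closed subset that is also contained in a countable union of points must be finite, so it is enough to show that this locus is closed and that it has no positive-dimensional component.

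\textbf{Closedness.} Let $U \subset |\mso_X(1)| \cong \mbp^5$ be the open set of hyperplanes with $Y$ having at worst mild singularities, so that $F(Y)$ is a surface. Let $\mcs \to U$ be the relative Fano surface, with $\mcs \subset F(X)\times U$. The condition ``$N[\Delta_{F(Y)}]$ admits a Bloch--Srinivas decomposition'' can be rewritten as $N(\iota_y - o_F\times F(Y)) = 0$ in $\ch^4(F(X)\times F(Y))$, using $o_F$ and the results of \cite{ShenYin2020}. Here $\iota_y$ denotes the graph of the inclusion. Consider the relative cycle $\Gamma := N([\Delta_{\mcs/U}] - o_F\times \mcs)$ on $F(X)\times_U \mcs$. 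By the standard spreading argument for relative cycles (Voisin/Bloch--Srinivas), the set of $y\in U$ where $\Gamma_y = 0$ in $\ch$ is a countable union of closed subsets. Huybrechts' refinement for a fixed order goes further: the vanishing of $\Gamma_y$ is equivalent to $\Gamma_y$ being the image of a cycle in a relative Chow variety of bounded degree. This is where fixing $N$ matters. The bound shows that, for fixed $N$, the locus is a finite union of closed subsets, hence closed. We would carry out the same argument here via a relative Hilbert or Chow scheme, using that the decomposition can be taken with bounded complexity because the class of $F(Y)$ is constant. We would also check that the order is upper semicontinuous in the appropriate sense, handling separately the finitely many boundary strata of singular $Y$.

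\textbf{No positive-dimensional families (main obstacle).} Suppose a curve $B\subset |\mso_X(1)|$ has $F(Y_b)$ constant cycle for all $b\in B$. The surfaces $F(Y_b)$ then sweep out a threefold $W\subset F(X)$, since distinct hyperplanes give distinct Fano surfaces. Every point of $W$ would represent $o_F$, so $W$ would be a constant cycle subvariety of dimension $3 > \tfrac12\dim F(X)$, contradicting the Mumford/Voisin bound quoted in the introduction. More concretely, the symplectic form restricts to zero on the Lagrangian $F(Y_b)$, and a constant cycle $W$ would force $\sigma|_W=0$, which is impossible for $\dim W=3$. What needs verifying is that a one-parameter family of distinct $F(Y_b)$ really covers a threefold rather than a surface. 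This holds because a line lies in $Y_b$ only for hyperplanes containing it, a $\mbp^3$ of hyperplanes, so a general point of $F(Y_b)$ lies on only finitely many members of a general curve $B$. For a special $B$, such as a pencil, we check this directly.

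Combining the two steps, the locus is closed of dimension zero, hence finite.
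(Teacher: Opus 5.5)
Your second step is essentially fine. A one-parameter family of constant cycle Fano surfaces would sweep out a threefold all of whose points represent $o_F$, contradicting Voisin's bound, and a version of this argument appears in the proof of \Cref{global_cycle_property}.

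The gap is the first step, which carries the whole content of the theorem. The spreading argument only shows that the locus where a family of cycles is rationally trivial (or $N$-torsion) is a countable union of closed subsets. There is no principle bounding the degrees of the subvarieties and rational functions that witness such a vanishing. Constancy of the cohomology class of $F(Y)$ gives no such bound, and fixing $N$ does not help. Such loci can in fact be infinite and non-closed. Let $E$ be a very general fibre of an elliptic K3 surface $T$ with zero section. Then $x\mapsto [x]-o_T$ is a homomorphism $E\to\ch_0(T)$. By Roitman's theorem its $N$-torsion locus equals its zero locus, which contains all torsion points of $E$. This zero locus is a proper subgroup, because $E$ is not a constant cycle curve, so it is infinite but not closed.

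So the claimed closedness, and the semicontinuity of the order, are unsupported. This is also not how Huybrechts proves \Cref{fin_ccc}; his argument goes through normal functions. In addition, the condition you write down is the wrong one. $N([\Gamma_f^t]-o_F\times[\ts])$ is never zero in $\ch^4(F(X)\times\ts)$, since its Künneth component $[S]\otimes[\mathrm{pt}]$ is nonzero. The Bloch--Srinivas condition only asks for vanishing over the generic point, that is, up to cycles supported on $F(X)\times D$.

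The missing idea is to replace Chow-theoretic vanishing by vanishing of a normal function, whose zero locus is Zariski closed by Brosnan--Pearlstein and Schnell. Two further ingredients are then needed, and your proposal supplies neither.

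First, you need a fibrewise cohomologically trivial family of cycles that becomes $N$-torsion, up to a constant independent of $Y$, whenever $\ord(F(Y))=N$. This is delicate, because the non-product term in the Bloch--Srinivas decomposition for $F(Y)$ has no controlled Abel--Jacobi image (\Cref{problem_coeff}). The paper resolves this by passing to the curve $C_l$ of lines meeting a generic line, and to $X$ via the Fano correspondence (\Cref{ccs_iff_ccc_general}, \Cref{ccs_decomp_curve}, \Cref{cri_AJ_Cl}). There the decomposition involves only product cycles.

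Second, the zero locus of the normal function may be strictly larger than the order-$N$ locus. A Chow-level statement like your step 2 therefore no longer excludes positive-dimensional components. The paper instead shows that the normal function does not vanish identically on any curve in the base. It first stratifies $|\mso_X(1)|$ so that the curves $C_{l_b}$ admit a simultaneous resolution (\Cref{stratification}). It then uses the Leray spectral sequence (\Cref{spec_bound_inj}) and the cohomological nontriviality of the global cycle (\Cref{global_cycle_property}).
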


Our proof follows Huybrechts' two-step strategy for  \Cref{fin_ccc}: first,  we derive an Abel--Jacobi criterion that detects constant cycle surfaces and their orders in the intermediate Jacobians;  then, we describe the locus of  $Y \in |\mso_X(1)|$ for which $F(Y)$ is a constant cycle  surface of  fixed order $N$, via the $N$-torsion locus of a normal function. The desired  finiteness then follows from the algebraicity of zero loci of normal functions, established independently by Brosnan--Pearlstein \cite{Brosnan_Pearlstein_2013} and  Schnell \cite{Schnell2012}. 
 
 The main difficulty lies in the first step. A direct adaptation of the strategy for \Cref{fin_ccc} does yield an Abel--Jacobi criterion for constant cycle surfaces $S \subset F(X)$, but the order $\ord(S)$ cannot be read off directly; see \Cref{bridge}. The reason is that the Bloch--Srinivas decomposition of the diagonal $\Delta_S$ is  not a $\mbz$-linear combination of product cycles in general. We resolve this by passing from $F(X)$ to the cubic fourfold $X$ via the Fano correspondence (see \Cref{equi_one_cycle}), and by applying the following result: 

\begin{prop}[\Cref{ccs_iff_ccc_general}] \label{intro_ccsiffccc}
	Let $Y \subset X$ be a hyperplane section that is not a cone, and $l \subset Y$ be a generic line. The Fano variety $F(Y)$ is a constant cycle surface on $F(X)$ if and only if the curve $C_l \subset F(Y)$ of lines intersecting  $l$ is a constant cycle curve on $F(X)$.
\end{prop}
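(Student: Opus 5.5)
One direction is immediate. If $F(Y)$ is a constant cycle surface on $F(X)$, then all points of $F(Y)$ are rationally equivalent in $F(X)$. Since $C_l \subset F(Y)$, all points of $C_l$ are rationally equivalent in $F(X)$, so $C_l$ is a constant cycle curve.

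The converse is the substantial part, and the plan is to exploit the geometry of lines on the cubic threefold $Y$. For a generic line $l \subset Y$, the curve $C_l$ is a smooth curve (of genus $11$ when $Y$ is smooth). Through a general point $[l'] \in F(Y)$ one can connect back to $C_l$ using the classical relation between incident lines. If $l'$ meets $l$, the plane $\langle l, l'\rangle$ cuts $Y$ in $l + l' + l''$, and Voisin's relation in $\ch_0(F(X))$ reads $[l]+[l']+[l'']$ up to a fixed multiple of $o_F$ and a term $g$-related to the plane. Concretely, I would use the identity $2[l]+[l'] = $ const (for $l$ with tangent plane, $l'$ residual), together with the fact that for incident lines $[l']+[l''] = c$ depends only on $l$.

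The key step is this. Given a general $[m] \in F(Y)$, I would choose a line $l' \in C_l$ meeting $m$; generically such lines exist, since the lines meeting both $l$ and $m$ are finitely many and nonzero (there are $5$ for skew general lines on a cubic threefold). For incident $l', m$, the third residual line $m''$ in the plane $\langle l', m\rangle$ satisfies $[m] + [m''] = c(l')$ in $\ch_0(F(X))$, where $c(l')$ is given by the intersection of $[l']$-related classes. More robustly, I would use the Shen–Vial/Voisin relation $[l_1]+[l_2]+[l_3] = $ const$\,\cdot o_F + \text{(class of } h^2\text{-type)}$, which for lines in a plane $P$ gives $\sum [l_i] = 3 o_F$ modulo a term depending only on the plane restricted to $F(X)$. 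Combining the relation for the plane $\langle l, l'\rangle$ with that for $\langle l', m\rangle$, and using that $[l'] = o$ is constant along $C_l$, I would express $[m]$ via points of $C_l$ and points of $C_{l'}$. I would then show that $C_{l'}$ is also constant cycle for $l' \in C_l$, by a specialization/spreading argument. The locus of lines $l \in F(Y)$ for which $C_l$ is constant cycle is a countable union of closed subsets, and it contains a generic point, hence is all of $F(Y)$. This gives constancy on a dense set of points of $F(Y)$, hence on all of $F(Y)$ by the closedness of constant cycle loci.

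The main obstacle is exactly this "generic $l$ implies all $l$" step, together with making the triangle relation precise with $\mathbb{Z}$-coefficients. I would first try Voisin's relation $2[l]+[l'] = $ const (valid for tangent planes) and the family version over the incidence correspondence, using the Bloch–Srinivas spreading principle: if $C_l$ is constant cycle for very general $l$, then it is so for all $l$ in the closure. Torsion issues do not arise, since Roitman and $\ch_0$-torsion-freeness apply after rationalization, and constancy of points is a statement in $\ch_0(F(X))$ with the class $o_F$. The assumption that $Y$ is not a cone guarantees that $F(Y)$ is an irreducible surface and that the incidence constructions above are dominant.
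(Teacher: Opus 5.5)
\textbf{There is a genuine gap in the converse direction.} The forward direction is fine. The gap sits exactly at the step you call the main obstacle.

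Your plane relation for $\langle l',m\rangle$ only controls the sum $[m]+[m'']$ of two points of $C_{l'}$. To isolate $[m]$ you therefore need $C_{l'}$ to be constant cycle for $l'\in C_l$. Your justification is that the set of lines $l$ with $C_l$ constant cycle is a countable union of closed subsets containing a generic point, hence all of $F(Y)$. This does not work:
\begin{itemize}
\item The hypothesis concerns one fixed line $l$, and ``generic'' here only means $\dim C_l=1$. That is an open condition, not that $l$ is very general.
\item A countable union of closed subsets containing one point, or even the whole curve $C_l$, need not be all of $F(Y)$. The spreading principle passes from the very general point to all points, not from a single point to all.
\item Knowing that $C_{l'}$ is constant cycle for every line $l'$ meeting $l$ is essentially the statement you are trying to prove, so the argument is circular.
\end{itemize}

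\textbf{What is missing} is a way to express the class of an arbitrary line $m\subset Y$ using lines of $C_l$ alone. The paper obtains this in \Cref{criterion_ccc_iff_ccs}:
\begin{itemize}
\item When $Y$ and $C_l$ are smooth, $\mathrm{Alb}(C_l)\to\mathrm{Alb}(F(Y))\simeq J(Y)\simeq\ch_1(Y)_{\lhom}$ is surjective (Clemens--Griffiths, Bloch--Srinivas, Murre). Hence $\ch_0(C_l)$ and $\ch_0(F(Y))$ have the same image in $\ch_1(X)$.
\item The general case is reduced to this by deforming to such pairs and specializing, using that $\ch_1(X)$ is torsion-free. Your sketch does not address singular $Y$ or singular $C_l$; generic only gives $\dim C_l=1$, not smoothness. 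Also, contrary to your last sentence, $F(Y)$ may be reducible when $Y$ is singular.
\item Constancy along $C_l$ plus a degree count then forces all lines of $Y$ to have one class in $\ch_1(X)$, and \Cref{equi_one_cycle} concludes.
\end{itemize}

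\textbf{A possible elementary repair.} If you want to stay with incidence geometry, use the five-line relation instead of the triangle relation. Suppose $m$ is skew to $l$ and $\langle l,m\rangle\cap Y$ is a smooth cubic surface. Then the five lines $l'_1,\dots,l'_5$ meeting both satisfy $\sum_j[l'_j]=3\hx^3-2[l]-2[m]$ in $\ch_1(X)$. This determines $2[m]$, hence $[m]$ by torsion-freeness, from classes of points of $C_l$ only. Degenerate configurations would still need separate treatment.

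\textbf{Work in $\ch_1(X)$.} The relations you quote in $\ch_0(F(X))$ (such as $2[l]+[l']=\mathrm{const}$) are not available there in that form. By contrast, $[l_1]+[l_2]+[l_3]=\hx^3$ holds integrally in $\ch_1(X)$ for lines cut out by a plane. \Cref{equi_one_cycle} then transports the conclusion back to $F(X)$.
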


When both $Y$ and $C_l$ are  smooth, the statement follows from the surjectivity of the natural map $\ch_0(C_l) \ra \ch_1(Y)$, as a consequence of  the classical isomorphisms $\mathrm{Alb}(F(Y)) \simeq J(Y)$ \cite{ClemensGriffiths_cubic3fold} and $\ch_1(Y)_{\hom} \simeq J(Y)$ \cite{Bloch-Srinivas83, Murre1985Applications}; the general case follows by specialization  (see \Cref{criterion_ccc_iff_ccs}).

\subsection{Further remarks}
A few  examples of constant cycle surfaces on $F(X)$ are known. For a general cubic fourfold $X$, these include the fixed locus of the Voisin map $\phi \colon F(X) \dashrightarrow F(X)$ constructed in \cite{Voisin02-Kcorrespondence} (see \cite{ShenYin2020}), and the Fano surface $F(Y)$ of a five-nodal hyperplane section $Y \subset X$, which is rational \cite{Voisin2008}. More generally, the Fano surface $F(Y)$ of a sufficiently singular hyperplane section $Y \subset X$ is expected to be rational, hence a constant cycle surface.

Further  candidates come from fixed loci of non-symplectic automorphisms of $F(X)$. For involutions this has been verified in \cite{li2023bloch}. For odd prime order, the generalized Bloch conjecture  (see \cite[Conj.~11.22]{Voisin_HodgeII}) predicts that the fixed locus is a constant cycle subvariety; this has been  confirmed for polarized automorphisms of $F(X)$ whose fixed loci are two-dimensional. Indeed, by \cite{GAL2011automorphisms}, the only possible order is three, and  only two families of cubic fourfolds, $\mcf_3^1$ and $\mcf_3^2$, are involved: in the first case the fixed locus is the Fano variety of a cubic threefold \cite[Ex.~6.4]{BCS16classification}, known to be a constant cycle surface by \cite{Laterveer2018algebraicII}; in the second case the fixed locus consists of rational surfaces \cite[Ex.~6.5]{BCS16classification}.

We are not aware of any  smooth cubic fourfold $X$ such that $F(X)$ contains countably infinitely many constant cycle (Fano) surfaces, and it would be interesting to construct such an example.

\hfill 

\noindent \textbf{Structure of this paper.}  
\Cref{ccs_generaltheory} studies the general theory of constant cycle surfaces on $F(X)$. 
\Cref{ccfs} focuses on Fano surfaces $F(Y)$ of hyperplane sections $Y \subset X$. \Cref{geo_prop}  recalls the geometry of Fano surfaces $F(Y)$ and the curves $C_l \subset F(Y)$.  \Cref{sec_ccs_iff_ccc} proves   \Cref{intro_ccsiffccc}, the characterization of constant cycle Fano surfaces $F(Y)$ in terms of the curves $C_l$; an alternative criterion that applies to all Lagrangian surfaces on $F(X)$ is given in \Cref{appendix}.  \Cref{AJ_Cl} derives the Abel--Jacobi criterion  for constant cycle Fano surfaces $F(Y)$. 
\Cref{sec_finiteness} is devoted to the proof of the finiteness of constant cycle Fano surfaces. \Cref{toolbox} recalls and adapts the theory of normal functions to our setting, and \Cref{proof_finiteness} proves  \Cref{main_thm_intro}. 

\hfill

\noindent \textbf{Acknowledgement.}
The research in this paper was motivated by a question of Emre Sertöz. I would like to thank my advisor Daniel Huybrechts for suggesting this interesting problem, for numerous inspiring discussions, and for valuable feedback that greatly improved the manuscript. I am also grateful to Frank Gounelas for stimulating conversations and helpful comments.

\hfill

\noindent \textbf{Convention.\ }
We work over the field $\mbc$. Subvarieties are assumed to be locally closed. For a scheme $Z$ of dimension $m$, we denote by  $\ch^k(Z) = \ch_{m - k}(Z)$ (resp.\ $\ch^k(Z)_{\mbq} = \ch_{m - k}(Z)_{\mbq}$) the Chow group of cycles of codimension $k$ with integral (resp.\ rational) coefficients. For a cycle $\Gamma \subset X \times Y$ on the product of two varieties,  write $\Gamma^t \subset Y \times X$ for its transpose. When $X$ and $Y$ are smooth and proper, we denote by $\Gamma_{\ast} \colon \ch_{\ast}(X) \to \ch_{\ast}(Y)$ and $\Gamma^{\ast} \colon \ch_{\ast}(Y) \to \ch_{\ast}(X)$ the induced maps on Chow rings.

\section{Constant cycle surfaces} \label{ccs_generaltheory}
Let $X \subset \mbp^5$ be a smooth cubic fourfold, and denote by $F(X)$ its Fano variety of lines. Constant cycle subvarieties contained in the hyperk\"ahler fourfold $F(X)$ have dimension at most two by \cite[Cor.~2.1]{Voisin2016}. In this section, we study the general theory of constant cycle surfaces on $F(X)$. 
We begin by recalling the following pointwise definition.
\begin{defn}
	 \label{def_ccs_pointwise}
	A surface $S \subset F(X)$ is  called a \textit{constant cycle surface} if for any two points $x, y \in S$, we have $[x] = [y]$ in $\ch_0(F(X))$.  
\end{defn}

As observed in \cite{ShenYin2020}, any point on a constant cycle surface $S \subset F(X)$ represents the canonical class $o_F \in \ch_0(F(X))$ constructed by Voisin in \cite[Lem.~3.2]{Voisin2008}. The class $o_F$ has the distinguished property that every zero-cycle class arising as an intersection product of divisors and Chern classes on $F(X)$ is a multiple of $o_F$ \cite[Thm.~1.4]{Voisin2008}. 

\begin{rmk} \label{ccs_is_Lag}
	An integral constant cycle surface $S$ on the Fano variety $F(X)$ is Lagrangian \cite[Cor.~1.2]{Voisin2016}; that is, the restriction of a non-degenerate holomorphic $2$-form on $F(X)$ to the smooth locus of $S$ is trivial.
\end{rmk}

The property of $S \subset F(X)$ being a constant cycle surface can also be detected by passing to the cubic fourfold $X$ via the incidence correspondence $\lx \subset  F(X) \times X$.

For a line $l \subset X$, we use  $[l]$ to denote both the associated one-cycle on $X$ and the corresponding zero-cycle on the Fano variety $F(X)$. By definition, the induced map $[\lx]_{\ast} \colon \ch_0(F(X)) \ra \ch_1(X)$ sends the class $[l]$ to the one-cycle $[l] \in \ch_1(X)$.

The following criterion is a direct corollary of  \cite[Thm.~3.4]{ShenYinZhao2020}.
\begin{lem} \label{equi_one_cycle}
	Let $l_1,l_2$ be two lines contained in a smooth cubic fourfold $X$. Then $[l_1] = [l_2]$ in $\ch_0(F(X))$ if and only if $[l_1] = [l_2]$ in $\ch_1(X)$.
	
	In particular, a surface $S \subset F(X)$ is a constant cycle surface if and only if all lines parametrized by $S$ are rationally equivalent on $X$.  \qed
\end{lem}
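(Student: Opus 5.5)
The plan is to deduce the lemma from \cite[Thm.~3.4]{ShenYinZhao2020}. As I recall it, that theorem says the following: the map $[\lx]_{\ast}\colon \ch_0(F(X)) \ra \ch_1(X)$ (more precisely its restriction to the relevant graded pieces) becomes injective once one accounts for the known relation coming from the quadratic relation for lines. Concretely, Shen--Yin--Zhao show that $\ch_0(F(X))$ decomposes as $\mbz\, o_F \oplus \ch_0(F(X))_{\hom}$, and that the Fano correspondence induces an isomorphism of the homologically trivial part of $\ch_0(F(X))$ onto $\ch_1(X)_{\hom}$, integrally and not just with rational coefficients. I will use it in exactly this form.

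The forward direction is immediate. If $[l_1]=[l_2]$ in $\ch_0(F(X))$, apply $[\lx]_{\ast}$. By the convention fixed just before the lemma, this sends the point class $[l_i]$ to the one-cycle $[l_i]\in \ch_1(X)$, so $[l_1]=[l_2]$ in $\ch_1(X)$.

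For the converse, assume $[l_1]=[l_2]$ in $\ch_1(X)$. The difference $[l_1]-[l_2]\in \ch_0(F(X))$ has degree zero, so it is homologically trivial. Its image under $[\lx]_{\ast}$ is zero. The injectivity statement of \cite[Thm.~3.4]{ShenYinZhao2020} on degree-zero cycles then forces $[l_1]-[l_2]=0$ in $\ch_0(F(X))$. The one point to check is that the cited injectivity is integral rather than only rational. If only a rational statement were available, I would fall back on Roitman's theorem: $\ch_0(F(X))_{\hom}$ is torsion free, because $F(X)$ is a smooth projective variety whose Albanese is trivial, since $b_1(F(X))=0$. Hence injectivity with $\mbq$-coefficients upgrades to injectivity on integral degree-zero classes. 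I expect this integrality bookkeeping to be the only real subtlety.

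The ``in particular'' statement follows directly from the pointwise Definition~\ref{def_ccs_pointwise}. A surface $S$ is constant cycle exactly when $[x]=[y]$ in $\ch_0(F(X))$ for all $x,y\in S$. By the equivalence just proved, this holds exactly when all lines parametrized by $S$ are rationally equivalent as one-cycles on $X$.
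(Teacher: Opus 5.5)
Your forward direction and the ``in particular'' part are fine. The converse has a genuine gap: it rests on a false statement. The map $[\lx]_*\colon \ch_0(F(X))_{\lhom}\to \ch_1(X)_{\lhom}$ is not injective, not even with $\mbq$-coefficients. Its kernel contains the summand $\ch_0(F(X))_4$ of the Shen--Vial decomposition, which is nonzero.

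You can see the non-injectivity directly with Voisin's map $\phi\colon F(X)\dashrightarrow F(X)$. For a general line $l$, the plane tangent to $X$ along $l$ cuts out $2l+\phi(l)$, so $2[l]+[\phi(l)]=\hx^3$ in $\ch_1(X)$. Hence $[\lx]_*(\phi_*\sigma+2\sigma)=0$ for every degree-zero class $\sigma$, where $\phi_*$ is the action of the closure of the graph. Injectivity would force $\phi_*=-2$ on $\ch_0(F(X))_{\lhom}$. Bloch--Srinivas would then make $\phi^*$ act by $-2$ on $H^{4,0}(F(X))$. But $\phi^*\sigma_F=-2\sigma_F$ gives $\phi^*\sigma_F^2=4\sigma_F^2$.

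Heuristically, $\ch_0(F(X))_{\lhom}$ behaves like $\ch_0(T^{[2]})_{\lhom}$ and $\ch_1(X)_{\lhom}$ like $\ch_0(T)_{\lhom}$ for a K3 surface $T$, and the Fano correspondence forgets the $\mathrm{Sym}^2$-part. So ``degree zero and in the kernel'' does not imply ``zero''. Your Roitman fallback cannot repair this, since the obstruction is a large kernel, not torsion.

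The missing idea is that the lemma is intrinsically a statement about classes of points, and it is nonlinear. It cannot be obtained from any linear injectivity statement. What \cite[Thm.~3.4]{ShenYinZhao2020} supplies, and what the paper quotes (calling the lemma a direct corollary), is precisely this pointwise statement for lines; you should invoke it in that form.

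To see why points are special, let $I\subset F(X)\times F(X)$ be the incidence correspondence of intersecting lines. Since $I=[\lx^t]\circ[\lx]$, the class $I_*[l]=[\lx^t]_*[l]$ depends only on the class of $l$ in $\ch_1(X)$. The Shen--Vial quadratic relation is $I^2=2\Delta_F+\Gamma_1\cdot I+\Gamma_2$, with $\Gamma_1,\Gamma_2$ polynomials in classes pulled back from the two factors. Applied to the point $[l]$, it expresses $2[l]$ as $(I_*[l])^2-\gamma\cdot I_*[l]+c\,o_F$, where $\gamma$ and $c$ are independent of $l$. The identity $(I^2)_*[l]=(I_*[l])^2$ holds only because $[l]$ is a point class. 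This is where the quadratic relation you allude to genuinely enters. Torsion-freeness of $\ch_0(F(X))$ then removes the factor $2$.
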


\begin{rmk}\label{dist_line_class}
Every line $l \subset X$ parametrized by a constant cycle surface $S \subset F(X)$ represents the distinguished class $[\lx]_{\ast}(o_F) \in \ch_1(X)$. By \cite[Lem.~A.3(v)]{ShenVial2013FTofHK}, one has $3 [\lx]_{\ast}(o_F) = \hx^3$, where $\hx \in \ch^1(X)$ denotes the  hyperplane class. Since $\ch_1(X)$ is torsion-free \cite[Lem.~1.2]{ShenYin2020}, the class $\hx^3$ is uniquely divisible by three, and thus $[\lx]_{\ast}(o_F) = \frac{1}{3}\hx^3$ in $\ch_1(X)$. 
\end{rmk}

We now turn to global characterizations of constant cycle surfaces on $F(X)$. Without loss of generality, we restrict to integral surfaces.

For an integral surface $S \subset F(X)$ with function field $k(S)$, the generic point $\eta_S$ defines a $k(S)$-rational point of the base change $F(X) \times k(S)$. We associate to $S$ the degree-zero class
\begin{equation*}\label{def_kappaS}
	\kappa_S \coloneqq [\eta_S] - o_F \times k(S) \in \ch^4(F(X) \times k(S)).
\end{equation*}
Alternatively, one can describe $\kappa_S$ as follows. Choose a resolution of singularities $\ts \ra S$, and consider the composition $f \colon \ts \ra S \subset F(X)$. Let $\Gamma_f \subset \ts \times F(X)$ denote the graph of $f$, and  $\Gamma_f^t \subset F(X) \times \ts$ its transpose.
Then under the base change 
\begin{equation*} \label{alt_def_kappaS}
	r \colon \ch^4(F(X) \times \ts) \ra \ch^4(F(X) \times k(S))
\end{equation*}
we have
\begin{equation} 
	 r([\Gamma_f^t] - o_F \times [\ts]) =  \kappa_S \in \ch^4(F(X) \times k(S)).
\end{equation}

The following result generalizes  \cite[Lem.~3.4, Prop.~3.7]{Huybrechts2014} to our setting. Note that it also extends to constant cycle Lagrangian subvarieties on hyperkähler varieties, as the main input is the torsion-freeness of $\ch_0$,  a property shared by all  hyperk\"ahler varieties  \cite{Rojtman1980torsion}. 
\begin{lem} \label{equi_def_ccs}
	Let $S \subset F(X)$ be an integral surface. The following statements are equivalent:
	\begin{enumerate}[label=(\roman*), font=\normalfont]
		\item \label{ccs_def} The surface $S$ is a constant cycle surface on $F(X)$.
		\item \label{BS_decomp} There exists a nonzero integer $N$ such that 
		\begin{equation*}
			N[\Gamma_f^t] = N \cdot o_F \times [\ts] + Z \quad  \text{in } \ch^4(F(X) \times \ts),
		\end{equation*}
	 where $Z$ is a cycle supported on $F(X) \times D$ for some proper closed subset $D \subset \ts$.
		\item  \label{eq_def_kappas} The class $\kappa_S \in \ch^4(F(X) \times k(S))$  is torsion. 
		\item \label{alg_closure}The pullback of $\kappa_S$ to $\ch^4(F(X) \times \overline{k(S)})$ is zero.
	\end{enumerate}
\end{lem}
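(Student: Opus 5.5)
I will prove the cycle of implications (i)$\Rightarrow$(iv)$\Rightarrow$(iii)$\Rightarrow$(ii)$\Rightarrow$(i), following \cite[Lem.~3.4, Prop.~3.7]{Huybrechts2014}. Throughout I use the identification
\[
\ch^4(F(X)\times k(S)) = \varinjlim_{U} \ch^4(F(X)\times U),
\]
with $U$ running over nonempty open subsets of $\ts$. The restriction $r$ is surjective, and the localization sequence shows that its kernel consists of classes supported on $F(X)\times D$ for proper closed subsets $D\subset\ts$.

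\textbf{(ii)$\Rightarrow$(i).} Restricting the decomposition in \ref{BS_decomp} to a fibre over a point $s\in \ts\setminus D$ gives $N[f(s)] = N o_F$ in $\ch_0(F(X))$. Since $\ch_0(F(X))$ is torsion-free by Roitman \cite{Rojtman1980torsion}, we get $[f(s)]=o_F$ for all $s$ in a dense open subset of $\ts$. To pass to every point, specialize: for an arbitrary point $s_0\in\ts$, choose a curve through $s_0$ meeting the open set and take the closure of the relevant family of zero-cycles. Since rational equivalence to a fixed class is a countable union of closed conditions, the locus where $[f(s)]=o_F$ is closed. Hence it is all of $\ts$, and all points of $S$ are rationally equivalent.

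\textbf{(iii)$\Rightarrow$(ii).} Suppose $N\kappa_S=0$. Then $N([\Gamma_f^t]-o_F\times[\ts])$ lies in $\ker r$, so it equals a cycle $Z$ supported on $F(X)\times D$ for some proper closed $D\subset\ts$. This is exactly \ref{BS_decomp}.

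\textbf{(iv)$\Rightarrow$(iii).} This is the standard norm argument. The vanishing of $\kappa_S$ over $\overline{k(S)}$ already happens over some finite extension $L/k(S)$. Composing pullback and pushforward along $\operatorname{Spec} L\to\operatorname{Spec} k(S)$ multiplies classes by $[L:k(S)]$, so $[L:k(S)]\,\kappa_S=0$.

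\textbf{(i)$\Rightarrow$(iv).} By Bloch--Srinivas, for a universal domain $\Omega\supset k(S)$, the map $\ch_0(F(X)_{\overline{k(S)}})\to\ch_0(F(X)_\Omega)$ is injective. It therefore suffices to prove that $\kappa_S$ vanishes after base change to $\Omega=\mbc$ via an abstract embedding $\overline{k(S)}\hookrightarrow\mbc$. Under such an embedding, $\eta_S$ becomes a very general complex point $x$ of $S$, relative to the countable field of definition, and $o_F$ is defined over that field. Statement \ref{ccs_def} gives $[x]=o_F$ in $\ch_0(F(X)_\mbc)$, hence the pulled-back class is zero.

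I expect the main obstacle to lie in this last step and in the specialization used in (ii)$\Rightarrow$(i). One must justify that $o_F$ and the relevant cycles are defined over a countable field, and invoke the injectivity of $\ch_0$ under extension of algebraically closed fields (which holds rationally, and integrally for zero-cycles by the Lecomte/Bloch argument). I will address this by citing \cite[Prop.~3.7]{Huybrechts2014}, whose proof carries over verbatim, since its only input specific to the K3 case is the torsion-freeness of $\ch_0$. That property holds for $F(X)$ by \cite{Rojtman1980torsion}.
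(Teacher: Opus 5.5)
Your cycle (i)$\Rightarrow$(iv)$\Rightarrow$(iii)$\Rightarrow$(ii)$\Rightarrow$(i) is correct in substance. It uses the same ingredients as the paper, routed differently.

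The paper proves three separate equivalences. For (i)$\Rightarrow$(ii) it simply quotes the Bloch--Srinivas principle. For (iii)$\Rightarrow$(iv) it uses Roitman's torsion-freeness of $\ch_0(F(X)_{\overline{k(S)}})$, together with the fact that base change to $\overline{k(S)}$ has torsion kernel.

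You instead unpack the Bloch--Srinivas principle: a very general point, plus injectivity of $\ch_0$ under extensions of algebraically closed fields. This reaches (iv) directly, so Roitman is needed only once, in (ii)$\Rightarrow$(i). The paper's version is shorter because it black-boxes exactly the step you spell out. Yours makes visible where the countable field of definition and the Shen--Yin identification of the common class with $o_F$ enter; the paper also uses the latter, implicitly.

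Two steps need repair as written.

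\emph{The step (ii)$\Rightarrow$(i).} The locus $\{s : [f(s)]=o_F\}$ is only a countable union of closed subsets, and such loci need not be closed. So "hence it is closed" is a non sequitur. There are two easy fixes:
\begin{itemize}
\item Use the uncountability of $\mbc$: a countable union of closed subsets containing a dense open subset of the irreducible $\ts$ is all of $\ts$.
\item Argue as the paper does and let the decomposition act on an arbitrary degree-one $z\in\ch_0(\ts)$. Every zero-cycle on the smooth projective surface $\ts$ can be moved off $D$, so $Z$ acts as zero on $\ch_0(\ts)$. Hence $N f_\ast(z)=N o_F$ for all such $z$, and no specialization is needed.
\end{itemize}

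\emph{The step (i)$\Rightarrow$(iv).} Since $\mbc\subset k(S)$, no embedding $\overline{k(S)}\hookrightarrow\mbc$ restricts to the identity on $\mbc$. Base change along such an embedding therefore lands in $\ch_0$ of a conjugate variety, not in $\ch_0(F(X))$, so (i) cannot be applied there. The argument has to be run as follows:
\begin{itemize}
\item Fix a countable algebraically closed field $k_0\subset\mbc$ over which $X$, $S$ and a zero-cycle representing $o_F$ are defined, and embed $\overline{k_0(S)}\hookrightarrow\mbc$ over $k_0$.
\item Under this embedding the generic point becomes a very general point $x\in S$. By (i) and the Shen--Yin fact, $[x]=o_F$.
\item Injectivity of $\ch_0$ under extensions of algebraically closed fields gives vanishing of the descended class over $\overline{k_0(S)}$.
\item Base change along $\overline{k_0(S)}\subset\overline{k(S)}$ then yields (iv).
\end{itemize}
You flagged the countable field yourself, so this is a matter of stating the embedding correctly rather than a missing idea.
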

\begin{proof}
		We first prove the equivalence of \ref{ccs_def} and \ref{BS_decomp}. By \Cref{def_ccs_pointwise}, a surface $S \subset F(X)$ is a constant cycle surface if and only if the pushforward $[\Gamma_f]_{\ast} \colon \ch_0(\ts) \to \ch_0(F(X))$ sends every degree-one cycle to the same class. The implication \ref{ccs_def} $\Rightarrow$ \ref{BS_decomp} then follows from the Bloch--Srinivas principle  \cite{Bloch-Srinivas83}. Conversely, assume \ref{BS_decomp}. For any degree-one class $z \in \ch_0(\ts)$, we have $N[\Gamma_f^t]_{\ast}(z) = N o_F$ in $\ch_0(F(X))$; the torsion-freeness of $\ch_0(F(X))$  \cite{Rojtman1980torsion} then implies that the image  $[\Gamma_f^t]_{\ast}(z) = o_F$ is independent of $z$, and hence \ref{ccs_def} holds.
		
		Next, we show the equivalence \ref{BS_decomp} $\Leftrightarrow$ \ref{eq_def_kappas}. The implication  \ref{BS_decomp} $\Rightarrow$  \ref{eq_def_kappas} follows by applying the base change $r \colon \ch^4(F(X) \times \ts) \ra \ch^4(F(X) \times k(S))$. For the converse, note that $r$ is the direct limit of restrictions $r_U \colon \ch^4(F(X) \times \ts) \ra \ch^4(F(X) \times U)$ for $U$ ranging over nonempty open subsets of $\ts$; see \cite[Lem.~1A.1]{Bloch_2010}. 
		 If  $\kappa_S = r([\Gamma_f^t] - o_F \times \ts)$  is $N$-torsion, then  $N([\Gamma_f^t] - o_F \times \ts)$ is supported on $F(X) \times D$ for some proper closed subset $D \subset \ts$, i.e.\ \ref{BS_decomp} holds.
		
		Finally, since  $\ch^4(F(X) \times \overline{k(S)})$ is torsion-free  \cite{Rojtman1980torsion}, and the pullback $\ch^4(F(X) \times k(S)) \to \ch^4(F(X) \times \overline{k(S)})$ has torsion kernel  \cite[Lem.~1A.3]{Bloch_2010}, the class $\kappa_S$ is torsion if and only if its base change to the algebraic closure $\overline{k(S)}$ vanishes. Thus \ref{eq_def_kappas} and \ref{alg_closure} are equivalent. This completes the proof.
\end{proof}

\Cref{equi_def_ccs} shows that the property of $S \subset F(X)$ being a constant cycle surface is completely encoded  by its generic point $\eta_S$. In particular, condition \ref{BS_decomp} in \Cref{equi_def_ccs} is independent of the choice of the resolution $\widetilde{S} \to S$.

Analogous to the case of constant cycle curves on K3 surfaces, one defines the order of a constant cycle surface on $F(X)$ as follows.
\begin{defn} \label{def_order_ccs}
	The \textit{order} of an integral constant cycle surface $S \subset F(X)$, denoted by $\ord(S)$, is defined as the order of the torsion class $\kappa_S \in \ch^4(F(X) \times k(S))$. For an arbitrary constant cycle surface $S \subset F(X)$, the order $\ord(S)$ is defined as the least common multiple of the orders of the irreducible components of the underlying reduced surface $S_{\mathrm{red}}$.
\end{defn}

\begin{rmk} \label{ord_alt_def}
	By the proof of the equivalence \ref{BS_decomp} $\Leftrightarrow$ \ref{eq_def_kappas} in \Cref{equi_def_ccs}, the order of an integral constant cycle surface $S \subset F(X)$ can equivalently be  defined as the minimal positive integer $N$ such that $N[\Gamma_f^t]$ admits a decomposition as in \Cref{equi_def_ccs}\ref{BS_decomp}; this definition is again independent of the choice of resolution $\ts \to S$.
\end{rmk}

Similar to \cite[Lem.~6.1]{Huybrechts2014}, we have the following example.
\begin{ex}  \label{rational_ord1}
	A rational surface $S \subset F(X)$ is a constant cycle surface of order one. Indeed, since $\ch^4(F(X) \times \mbp^2) \simeq \oplus_{i=0}^2 \ch^{4-i}(F(X)) \otimes \ch^{i}(\mbp^2)$, we obtain $$\ch^4(F(X) \times k(S)) \simeq \ch^4(F(X) \times k(\mbp^2)) \simeq \varinjlim_{U \subset \mbp^2} \ch^4(F(X) \times U) \simeq \ch^4(F(X)).$$  Hence $\ch^4(F(X) \times k(S))$ is torsion-free. In particular, the torsion class $\kappa_S = 0$ and $\ord(S) = 1$.
\end{ex}

In general, the order of a constant cycle surface on $F(X)$ is difficult to determine.

We conclude this section with a few observations on constant cycle curves on $F(X)$. 

Let $S \subset F(X)$ be a constant cycle surface. An integral curve $C \subset S$ is a constant cycle curve on $F(X)$. Viewing the generic point $\eta_C \in C$ as a $k(C)$-rational point on the base change $C \times k(C)$, one associates to $C$ the class $$\kappa_C \coloneqq  [\eta_C] - o_F \times k(C) \in \ch^4(F(X) \times k(C)).$$ The same argument as in \Cref{equi_def_ccs} shows that $\kappa_C$ is torsion. Following \cite[Def.~3.5]{Huybrechts2014}, we define  the \textit{order} $\textup{ord}(C)$ of the constant cycle curve $C$ as the order of the torsion class $\kappa_C$. 

The following lemma is immediate.
\begin{lem} \label{order_ccc}
	Let $S \subset F(X)$ be an integral constant cycle surface that is smooth at the generic point of an integral curve $C \subset S$. Then $C$ is a constant cycle curve on $F(X)$ with $\ord(C) \mid \ord(S)$.
\end{lem}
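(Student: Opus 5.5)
The plan is to specialize the class $\kappa_S$ from the generic point of $S$ to the generic point of $C$. Since $S$ is smooth at $\eta_C$, the local ring $A \coloneqq \mso_{S,\eta_C}$ is a discrete valuation ring. Its fraction field is $k(S)$ and its residue field is $k(C)$.

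We use Fulton's specialization map for cycles over a DVR,
\[
\sigma \colon \ch^4(F(X) \times k(S)) \longrightarrow \ch^4(F(X) \times k(C)).
\]
This map is a group homomorphism. It is defined by taking the closure of a cycle in $F(X) \times \operatorname{Spec} A$ and then applying the refined Gysin pullback along the regular embedding of the closed fibre; this is the construction in Fulton's \emph{Intersection Theory}, \S 20.3.

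Concretely, choose an open neighbourhood $U \subset S$ of $\eta_C$ that is smooth, with $C \cap U$ a Cartier divisor in $U$. Then $\sigma$ is induced by the composite
\[
\ch^4(F(X)\times U) \xrightarrow{\ \iota^!\ } \ch^4\bigl(F(X)\times (C\cap U)\bigr) \longrightarrow \ch^4(F(X)\times k(C)),
\]
where $\iota^!$ is the Gysin pullback along $\iota \colon F(X) \times (C\cap U) \hookrightarrow F(X)\times U$. The first step is to check that this composite factors through the direct limit $\ch^4(F(X)\times k(S)) = \varinjlim_{V\subset U}\ch^4(F(X)\times V)$. The point is that a cycle supported over a proper closed subset of $U$ restricts, after shrinking $U$ around $\eta_C$, to a cycle supported over a proper closed subset of $C\cap U$. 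Hence its image in $\ch^4(F(X)\times k(C))$ vanishes.

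The second step is to compute the specialization on the two terms of
\[
\kappa_S = [\eta_S] - o_F\times k(S).
\]
The point $\eta_S$ is the restriction of the graph $\Gamma \subset F(X)\times U$ of the inclusion $U\hookrightarrow F(X)$ (transposed). Its Gysin pullback to $F(X)\times (C\cap U)$ is the graph of $C\cap U \hookrightarrow F(X)$, since the intersection is transversal: $\Gamma$ is a section over $U$. So $[\eta_S]$ specializes to $[\eta_C]$. Likewise $o_F\times U$ pulls back to $o_F\times (C\cap U)$, so $o_F \times k(S)$ specializes to $o_F\times k(C)$. Therefore $\sigma(\kappa_S)=\kappa_C$.

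Since $\sigma$ is a homomorphism and $N\kappa_S=0$ with $N=\ord(S)$, we get $N\kappa_C=0$. Hence $\kappa_C$ is torsion, which means $C$ is a constant cycle curve (this also follows pointwise directly), and $\ord(C)\mid \ord(S)$.

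The main obstacle is the well-definedness of $\sigma$ on the colimit, that is, that it kills classes supported over proper closed subsets of $S$. This is exactly where the smoothness of $S$ at $\eta_C$ enters: it makes $C$ locally Cartier, so a Gysin map exists. If verifying the factorization by hand is awkward, I would instead cite Fulton's Proposition~20.3 and its compatibility with localization, after working over $\operatorname{Spec} A$ directly.
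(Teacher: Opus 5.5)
Your proposal is correct and follows the paper's proof, which simply invokes Fulton's specialization map (\cite[Sec.~20.3]{fulton1998intersection}) over the DVR $\mso_{S,\eta_C}$ and observes that it sends $\kappa_S$ to $\kappa_C$. One small correction to your well-definedness sketch: a cycle supported over a proper closed subset $D \subset U$ does not become supported over a proper closed subset of $C \cap U$ when $C \subset D$. For such cycles $\iota_*\alpha$ you instead use $\iota^!\iota_*\alpha = c_1(\mathcal{O}_U(C)|_{C\cap U}) \cap \alpha$, which vanishes once $U$ is shrunk so that $C \cap U$ is cut out by a uniformizer. This is exactly Fulton's argument.
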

\begin{proof}
	The specialization map  $\textup{sp} \colon \ch^4(F(X) \times k(S)) \ra \ch^4(F(X) \times k(C))$ (see \cite[Sec.~20.3]{fulton1998intersection}) sends $\kappa_S$ to $\kappa_C$. If $S$ is a constant cycle surface of order $N$, i.e.\  the associated  torsion class $\kappa_S$ has order $N$, then  $\kappa_C$ is also  $N$-torsion.
\end{proof}
\begin{rmk} 
	If a constant cycle curve $C \subset F(X)$ is not contained in any constant cycle surface, the image of 
	$\ch_0(C) \to \ch_0(F(X))$ need not be generated by the canonical class $o_F$. 
	A general rational curve in the ruling of the uniruled divisor $D \subset F(X)$ constructed in \cite[Prop.~4.4(b)]{Voisin2016} provides such an example, since $F(X)$ contains no constant cycle divisors. 
	Nevertheless, the order of such a constant cycle curve $C$ remains well-defined, by replacing $o_F$ with the degree-one generator of the image of $\ch_0(C) \to \ch_0(F(X))$ in the definition of $\kappa_C$.
\end{rmk}

\section{Constant cycle Fano surfaces} \label{ccfs}
Let $X$ be a smooth cubic fourfold with $F(X)$ its Fano variety of lines. In this section, we focus on constant cycle surfaces on $F(X)$ arising as Fano varieties $F(Y) \subset F(X)$ of hyperplane sections $Y \subset X$. We prove a  criterion for $F(Y)$ to be a constant cycle surface, in terms of the curve  $C_l \subset F(Y)$ of lines intersecting a generic line $l \subset Y$ (see \Cref{ccs_iff_ccc_general}), and derive an Abel--Jacobi criterion for constant cycle Fano surfaces (see \Cref{cri_AJ_Cl}).
\subsection{Geometry of Fano surfaces} \label{geo_prop}
We begin by recalling the geometry of Fano surfaces of hyperplane sections of a smooth cubic fourfold $X$.

Let $Y \subset X$ be a hyperplane section. Then $Y$ is a cubic threefold with at most isolated singularities. By \cite[Sec.~1]{AltmanKleiman_Fanoscheme}, its Fano variety of lines $F(Y)$ is a connected locally complete intersection of pure dimension two, and the singular locus $\mathrm{Sing}(F(Y))$ consists precisely of the points that correspond to lines through a singular point of $Y$. The following three cases occur:

\begin{enumerate}[label=(\roman*), font=\normalfont]
	\item 
	If  $Y$ is smooth, then $F(Y)$ is a smooth irreducible surface.	
	\item If $Y$ is singular with no triple points, then through each singular point of $Y$ there passes a one-dimensional family of lines contained in $Y$, so $\dim \sing(F(Y)) = 1$. The surface $F(Y)$ is geometrically reduced, though possibly  reducible.
	\item \label{cone_case}If $Y$ has a triple point $O$, then $Y$ is a cone over a smooth cubic surface $S$ with vertex $O$ and contains $27$ planes.  In this case, a line $l \subset X$ is contained in $Y$ if and only if either $O \in l$ or $l$ lies on one of the $27$ planes. The Fano surface $F(Y)$ is nonreduced, and the underlying reduced surface $F(Y)_{\rm red}$ has $28$ irreducible rational components. 
\end{enumerate}

\begin{rmk} \label{exclude_cone}
	Case \ref{cone_case} occurs precisely when $X$ has an \textit{Eckardt point}\footnote{Also called a \textit{star point} in \cite[Def.~2.2]{CollsCoppensstarpoints}.}, 
	that is, a point through which passes a two-dimensional family of lines on $X$. Such cubic fourfolds admit  non-symplectic involutions and form a $14$-dimensional family in the  moduli space  \cite[Thm.~3.8]{GAL2011automorphisms}. By \cite[Lem.~1.6]{LPZ18}, in suitable coordinates, the cubic fourfold $X$ is given by the equation $x_5^2x_0 + G(x_0,\ldots,x_4)=0$
	with Eckardt point $O=[0:\cdots:0:1]$, and the hyperplane section $Y = X \cap V(x_0)$ is the unique cone with vertex $O$. 
	Since  a smooth cubic fourfold  has at most  finitely many Eckardt points  \cite[Cor.~2.2]{CoskunStarr09}, only finitely many hyperplane sections $Y \subset X$ are cones, and  thus their Fano surfaces contribute only finitely many constant cycle surfaces of order one on $F(X)$ (see \Cref{rational_ord1}).
\end{rmk}

For a line $l \subset Y$, define $C_l \subset F(Y)$ as the reduced scheme supported on the closure of the set
\begin{equation} \label{def_Cl}
	\{[l^{\prime}] \in F(Y) \mid l^{\prime} \subset Y \ \textup{with} \ l^{\prime} \cap l \neq \varnothing\}.
\end{equation} 
When $Y$ is smooth, the scheme $C_l$  is a curve, and $[l] \in C_l$ if and only if $l$ is a line of \textit{second type}, i.e.\ there exists a unique plane in $\mbp^4$ tangent to $Y$ along $l$; see \cite[Lem.~10.7]{ClemensGriffiths_cubic3fold}. Further, if $l \subset Y$ is \textit{non-special}\footnote{Such a line is also referred to as a \textit{good line} in \cite[Def.~2.2]{LazaSaccaVoisin}.} in the sense  of \cite[Def.~3.4]{CM-Laza09}, namely $l$ is neither of second type nor the residual line of a line of second type, then the curve $C_l$ is smooth and irreducible  \cite[p.~326]{ClemensGriffiths_cubic3fold}.

In general,  the subscheme $C_l \subset F(Y)$ can be: 
\begin{enumerate}[label=(\roman*)]
	\item singular, e.g.\ when $Y$ is smooth and $l \subset Y$ is of second type;
	\item reducible, e.g.\ when $Y$ is singular and $l$ passes through a singular point of $Y$;
	\item not equidimensional, e.g.\ when $Y$ contains a plane $\Pi$ and $l \subset \Pi$; in this case, the scheme $C_l$ contains a two-dimensional rational component. \label{nonequidim}
\end{enumerate}

The most degenerate case occurs when $Y \subset X$ is a cone: for every line $l \subset Y$, the scheme $C_l$ contains at least one  two-dimensional component. The following lemma  shows that this happens precisely when $Y$ is a cone. 
\begin{lem} \label{existence_1dim}
	Let  $Y \subset X$ be a hyperplane section with no triple points. Then there  exists an irreducible component $S \subset F(Y)$ in which the locus $\{[l] \in S \mid \dim(C_l) = 1\}$ 
	is Zariski open and dense. We call a line $l \subset Y$ generic if $\dim(C_l) = 1$.
\end{lem}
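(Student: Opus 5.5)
We must show: for $Y$ a hyperplane section with no triple point, some irreducible component $S\subset F(Y)$ contains a dense open set of lines $l$ with $\dim C_l = 1$. The plan is to first prove that $\dim C_l=1$ is an open condition on $[l]\in F(Y)$, and then produce, on a suitable component, a single line $l$ with $\dim C_l = 1$.

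\textbf{Openness.} Consider the incidence
\[
I=\{([l],[l'])\in F(Y)\times F(Y)\mid l\cap l'\neq\varnothing\},
\]
which is closed, together with its first projection $p\colon I\to F(Y)$. The fibre of $p$ over $[l]$ is set-theoretically $C_l$ (together with $[l]$ itself). By upper semicontinuity of fibre dimension for the proper morphism $p$, the locus $\{[l] \mid \dim p^{-1}([l])\ge 2\}$ is closed. Fibres always have dimension at least one: through a general point of $l$ there pass lines of $Y$, since $\dim F(Y)=2$ and the lines cover $Y$. Hence the locus where $\dim C_l=1$ is open. It therefore suffices to find one component $S$ and one point $[l]\in S$ with $\dim C_l=1$.

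\textbf{When $\dim C_l\geq 2$.} Suppose $C_l$ contains a two-dimensional component $T$. Then $T$ is a component of $F(Y)$, so a two-parameter family of lines in $Y$ all meet $l$. I will show this forces $T$ to consist of lines in a plane, or of lines through a fixed point of $l$.

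\begin{itemize}
\item If a general point of $l$ lies on only finitely many lines of $T$, then the lines of $T$ sweep out a surface containing $l$ whose lines form a two-dimensional family. Such a surface is a plane $\Pi\subset Y$.
\item Otherwise, some point $q\in l$ lies on a one-dimensional family of lines of $T$, and in fact every line of $T$ passes through a common point. A two-dimensional family of lines through one point of $Y$ means $Y$ contains the cone of all lines through that point. At a smooth point $q$ of $Y$, the lines through $q$ in $Y$ are cut out by equations of degrees $1,2,3$ in $\mathbb{P}^3$, hence form a family of dimension at most one unless degenerate. Through a double point, lines form the $(2,3)$ complete intersection, a curve. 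Thus a two-dimensional family through one point forces a triple point, which is excluded.
\end{itemize}

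So $\dim C_l\ge 2$ forces $l\subset\Pi$ for some plane $\Pi\subset Y$.

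\textbf{Choice of $S$.} $Y$ contains only finitely many planes; otherwise infinitely many planes would give a divisor of planes in $F(Y)$, again only possible for cones. The lines lying in these planes form finitely many $\mathbb{P}^{2\vee}$-components of $F(Y)$. I claim $F(Y)$ has a component not of this form. If every component were a dual plane $\Pi^\vee$, then $Y$ would be covered by its lines only through finitely many planes, which is impossible since $Y$ is an irreducible threefold. Take $S$ to be such a component. A general $[l]\in S$ lies in no plane of $Y$, since lying in a plane is a closed condition whose locus is the union of the $\Pi^\vee$, none of which is $S$. Hence $\dim C_l=1$ for general $[l]\in S$, and openness gives a Zariski open dense subset.

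\textbf{Main obstacle.} The key step is the second case above: ruling out a two-dimensional family of lines all passing through a point of $l$ without a triple point. This needs the local analysis of lines through smooth and double points. The degenerate subcase at a smooth point, where the degree-$2$ and degree-$3$ conditions vanish identically on the tangent directions, must be excluded using that $X$ is smooth and $Y$ has isolated singularities. For this I would invoke the Eckardt point discussion in \Cref{exclude_cone}.
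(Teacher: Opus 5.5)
\textbf{There is a genuine gap in the central step.} Your openness argument and the choice of a component $S$ not of the form $\Pi^\vee$ are fine. The problem is the claim that a two-dimensional component $T\subset C_l$ forces $l$ into a plane of $Y$: your case distinction does not match the geometry. Every line of $T$ meets $l$, so the incidence $\{(q,[l'])\mid [l']\in T,\ q\in l\cap l'\}$ is two-dimensional. Hence its image in $l$ is either a single point, in which case all lines of $T$ pass through one point of $l$, or all of $l$, in which case a \emph{one-parameter} family of lines of $T$ passes through a general point of $l$.

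Your first case, where a general point of $l$ lies on only finitely many lines of $T$, is therefore exactly the cone case. The conclusion ``they sweep out a surface, hence a plane'' does not follow. Your second case asserts that a one-dimensional family through points of $l$ forces all lines of $T$ through a common point, and this is false. If $l\subset\Pi\subset Y$ and $T=\Pi^\vee$, every point of $l$ lies on a pencil of lines of $T$, yet the lines of $T$ have no common point. So the actual difficult situation is never analysed: a one-parameter family of lines of $T$ through each general $q\in l$, sweeping out a cone inside $T_qY\cap Y$.

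Conversely, the ``degenerate subcase'' you single out as the main obstacle is harmless. A two-dimensional family of lines through a smooth point $q$ would fill $T_qY\simeq\mathbb{P}^3$, which cannot lie in $Y$. Eckardt points of $X$ are triple points of $Y$ and play no role there.

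\textbf{How the paper avoids this, and how to repair your outline.} The paper never analyses a single line. It assumes $\dim C_l=2$ for all $l$ and takes two general lines $l_1,l_2$ of a component $S_i$ whose curves $C_{l_1},C_{l_2}$ share a two-dimensional component $S_j$. A two-dimensional family of lines meeting two skew lines fills their span $\mathbb{P}^3$, which cannot lie in $Y$, so $l_1\cap l_2\neq\varnothing$. Pairwise meeting lines lie in a plane or pass through one point.

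You can graft this onto your outline as follows.
\begin{enumerate}
\item Suppose $\dim C_l=2$ for general $[l]\in S$. Since $F(Y)$ has finitely many components and $\{[l]\mid S'\subset C_l\}$ is closed, some component $S'$ satisfies $S'\subset C_l$ for every $[l]\in S$.
\item Any two lines of $S$ then meet. Otherwise $S'$ would consist of all lines joining two skew lines, forcing $\mathbb{P}^3\subset Y$.
\item So $S$ is either $\Pi^\vee$, which your choice of $S$ excludes, or a two-dimensional family through one point, which needs a triple point.
\end{enumerate}
This repair also makes your separate claim that $Y$ contains only finitely many planes unnecessary; that claim was only sketched.
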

\begin{proof}
	This result is likely classical; however, as we were unable to locate a reference, we include a proof for completeness. It suffices to show that there exists a line $l \subset Y$ such that $\dim(C_l) = 1$. Suppose for contradiction that $\dim(C_l) = 2$ for every line $l \subset Y$. For each irreducible component $S_i \subset F(Y)$, there exists a (possibly identical) irreducible component $S_j \subset F(Y)$ such that for a generic point $[l] \in S_i$, the scheme $C_l$ contains $S_j$. Then, for any two generic points $[l_1], [l_2] \in S_i$, the lines $l \subset Y$ meeting both $l_1$ and $l_2$ form a two-dimensional family. It follows that  $l_1 \cap l_2 \neq \varnothing$, and the lines parametrized by $S_i$ sweep out either a plane or a cone contained in $Y$. Since the former cannot occur for all irreducible components of $F(Y)$, the cubic threefold $Y$ must be a cone, a contradiction.
\end{proof}

In what follows, to simplify the discussion, we consider only hyperplane sections $Y \subset X$ that are not cones and  lines $l \subset Y$ that are generic in the above sense. 

\subsection{A criterion via the curve $C_l$}\label{sec_ccs_iff_ccc} 
 The aim of this section is to prove the following  characterization of constant cycle Fano surfaces on $F(X)$.
\begin{prop}[\Cref{intro_ccsiffccc}] \label{ccs_iff_ccc_general}
	Let $Y \subset X$ be a  hyperplane section with no triple points, and let $l \subset Y$ be a generic line in the sense of \Cref{existence_1dim}. The following statements are equivalent:
	\begin{enumerate}[label=(\roman*), font=\normalfont]
		\item \label{ccs} The Fano surface $F(Y)$ is a constant cycle surface on $F(X)$.
		\item \label{ccc}The curve $C_l \subset F(Y)$  is a constant cycle curve on $F(X)$.
	\end{enumerate} 
\end{prop}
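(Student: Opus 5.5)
The implication (i) $\Rightarrow$ (ii) is immediate, since $C_l \subset F(Y)$: any two points of $C_l$ are points of $F(Y)$, so they are rationally equivalent in $F(X)$ by \Cref{def_ccs_pointwise}. The work is in (ii) $\Rightarrow$ (i). By \Cref{equi_one_cycle}, it suffices to show that every line $l' \subset Y$ has class $[l'] = c$ in $\ch_1(X)$, where $c$ is the common class of the lines parametrized by $C_l$ (which is $\frac13 \hx^3$ by \Cref{dist_line_class}). So the plan is to show that the one-cycles $[l']$ with $[l'] \in C_l$ generate enough of $\ch_1(Y)$ that every line of $Y$ has class $c$ after pushing forward to $X$.

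\textbf{Smooth case.} Assume first that $Y$ is smooth and $l$ is non-special, so that $C_l$ is a smooth irreducible curve. Consider the Fano correspondence $\ch_0(C_l) \to \ch_0(F(Y)) \to \ch_1(Y)$. Restricted to homologically trivial classes, this becomes
\[
J(C_l) \to \mathrm{Alb}(F(Y)) \simeq J(Y) \simeq \ch_1(Y)_{\hom},
\]
using Clemens--Griffiths and Bloch--Srinivas/Murre. The map $J(C_l)\to \mathrm{Alb}(F(Y))$ is surjective because $C_l$ is an ample curve on $F(Y)$: it is a hyperplane-type divisor class, and a Lefschetz-type argument gives surjectivity of $H_1$. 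Hence every $[l'] - [l''] \in \ch_1(Y)_{\hom}$ is represented by a degree-zero cycle supported on $C_l$. Pushing forward to $X$, such a class maps to $0$ in $\ch_1(X)$ by (ii), so $[l'] = [l'']$ in $\ch_1(X)$ for all lines $l', l'' \subset Y$.

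\textbf{General case by specialization.} For general $Y$ (singular, no triple point) and generic $l$, I would put $(Y,l)$ in a one-parameter family $(Y_t, l_t)$ over a smooth curve $T$, with $Y_t$ smooth and $l_t$ non-special for $t \neq 0$. The difficulty is that (ii) holds only at $t=0$, so the argument must run the other way. I would instead argue that for a generic line $l'$ of $Y$ there is a degree-zero cycle $\sum n_i [m_i]$ with each $m_i$ meeting $l$, such that $[l'] - [l] = \sum n_i [m_i]$ in $\ch_1(Y)$. Such a relation holds on $Y_t$ for all $t\neq 0$ by the smooth case. The relation spreads out over a finite cover of $T$ as a family of cycles $\Gamma_t$ supported on lines meeting $l_t$, and then specializes via Fulton's specialization map to $t=0$. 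The specialized cycles are supported on limits of lines meeting $l_t$, hence on $C_l$, or possibly on further components when $C_l$ degenerates. The choice of $l$ generic ensures $C_l$ is one-dimensional; I would arrange that the limit of $C_{l_t}$ is contained in $C_l$ as a cycle, by properness of the relative incidence scheme.

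Once every line $l'$ in a dense open subset of each component of $F(Y)$ satisfies $[l']=c$ in $\ch_1(X)$, the statement extends to all of $F(Y)$. This follows from closedness: the locus of points of $F(X)$ representing $o_F$ is a countable union of closed subsets. This gives (i). The main obstacle is the specialization step: controlling the flat limit of the cycles supported on $C_{l_t}$ so that it is supported on $C_l$, and handling reducibility of $F(Y)$ so that a single generic $l$ reaches every component.
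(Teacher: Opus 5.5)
Your proposal is correct and follows the paper's route. The smooth case uses surjectivity of $\ch_0(C_l)_{\lhom} \to \ch_1(Y)_{\lhom}$ (Clemens--Griffiths plus Bloch--Srinivas/Murre), and the general case deforms $(Y,l)$ to good pairs, spreads the relation out, and specializes it to the central fibre before invoking (ii), which is exactly the content of \Cref{criterion_ccc_iff_ccs}. The obstacles you flag are harmless: limits of lines meeting $l_t$ meet $l$ because meeting is a closed condition, and you need not restrict to generic $l'$ (so reducibility of $F(Y)$ plays no role), because every line of $Y$ extends to a section after a finite base change; the paper carries out the spreading-out step via the relative Picard/intermediate Jacobian and Bloch--Srinivas, killing the resulting torsion by torsion-freeness of $\ch_1(X)$.
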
 

The following corollary is immediate. Note that compared with condition~\ref{BS_decomp}
of \Cref{equi_def_ccs}, the Chow-theoretic decomposition in
condition~\ref{ccc_decomp} below involves only product cycles.

\begin{cor} \label{ccs_decomp_curve}
	In the setup of \Cref{ccs_iff_ccc_general}, let $\tcl \ra C_l$ be the normalization, and let $\tpl  \subset \tcl \times X$ denote the incidence correspondence of lines on $X$ parametrized by $\tcl$.
	The following statements are equivalent:
	\begin{enumerate}[label=(\roman*), font=\normalfont]
		\item \label{ccs_2}The Fano surface $F(Y)$ is a constant cycle surface on $F(X)$.
		\item \label{ccc_decomp} There exists an integer $N > 0$ such that 	\begin{equation*}
			N[\tpl^t] = N \cdot \frac{1}{3}\hx^3 \times [\tcl] + Z  \ \ \textup{in} \ \ch^3(X \times \tcl),
		\end{equation*}
		where $Z$ lies in the image of $\ch^2(X) \otimes \ch^1(\tcl) \ra \ch^3(X \times \tcl)$. 
	\end{enumerate}
	In this case, one can take the coefficient $N$ to be the order of the constant cycle surface $F(Y)$ (see \Cref{def_order_ccs}).
\end{cor}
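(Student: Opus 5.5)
The plan is to go through the classes $\kappa$ at generic points and push them from $F(X)$ to $X$ with the incidence correspondence $\lx$. For (ii) $\Rightarrow$ (i) I use the torsion-freeness of $\ch_1(X)$ together with \Cref{equi_one_cycle} and \Cref{ccs_iff_ccc_general}. For (i) $\Rightarrow$ (ii), I specialize the decomposition of \Cref{equi_def_ccs} from $F(Y)$ to the components of $C_l$.

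\emph{(i) $\Rightarrow$ (ii).} Let $N = \ord(F(Y))$, and let $C_1,\dots,C_r$ be the components of $C_l$. These are curves because $l$ is generic.
\begin{enumerate}
\item For each $i$, the class $\kappa_{C_i}$ should be $N$-torsion. Choose a component $S$ of $F(Y)$ containing $C_i$. Since $\ord(S)$ divides $N$, I apply \Cref{order_ccc} to $S$ and $C_i$.
\item \Cref{order_ccc} requires $S$ to be smooth at the generic point of $C_i$. I plan to argue that for a generic line $l$, the curve $C_i$ is not contained in $\sing(F(Y))$. That locus is the curve of lines through the singular points of $Y$, and a dimension count over the generic choice of $l$ should separate it from $C_l$.
\item If this fails, I would instead use Fulton's specialization map along a suitable local ring of a modification of $S$ in which a curve dominates $C_i$. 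That map still sends $\kappa_S$ to $\kappa_{C_i}$, so $\kappa_{C_i}$ is again $N$-torsion.
\item Now pass to the normalization $\widetilde{C}_i$. By the direct-limit argument in the proof of \Cref{equi_def_ccs}, $N([\Gamma^t_{C_i}] - o_F\times[\widetilde{C}_i])$ is supported on $F(X)\times D$ with $D$ finite. Hence it equals a sum $\sum_j \alpha_j\times[p_j]$ with $\alpha_j\in\ch^3(F(X))$ and $p_j$ points of $\widetilde{C}_i$.
\item Apply the correspondence $[\lx]\times\Delta_{\widetilde{C}_i}$, which maps $\ch^4(F(X)\times\widetilde{C}_i)\to\ch^3(X\times\widetilde{C}_i)$. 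It sends $[\Gamma^t]$ to $[\tpl^t]$ restricted to $\widetilde{C}_i$.
\item By \Cref{dist_line_class}, it sends $o_F\times[\widetilde{C}_i]$ to $\frac13\hx^3\times[\widetilde{C}_i]$.
\item The point terms go to $[\lx]_\ast\alpha_j\times[p_j]$, which lies in the image of $\ch^2(X)\otimes\ch^1(\widetilde{C}_i)$.
\item Summing over $i$, using $\tcl=\bigsqcup_i\widetilde{C}_i$, gives (ii) with the coefficient $N=\ord(F(Y))$.
\end{enumerate}

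\emph{(ii) $\Rightarrow$ (i).} Fix a closed point $p\in\tcl$ corresponding to a line $l'$.
\begin{enumerate}
\item Act with the transpose correspondence on $[p]$, i.e.\ pull back along $X\times\{p\}\hookrightarrow X\times\tcl$.
\item The left side of the decomposition gives $N[l']\in\ch_1(X)$. The first term on the right gives $\frac N3\hx^3$.
\item A term $\beta\times\gamma$ with $\gamma\in\ch^1(\tcl)$ contributes a multiple of $\beta\cdot(\gamma\cdot[p])$. Since $\gamma\cdot[p]$ has codimension $2$ on a curve, this vanishes, so $Z$ contributes nothing.
\item Hence $N([l']-\frac13\hx^3)=0$. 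Because $\ch_1(X)$ is torsion-free \cite[Lem.~1.2]{ShenYin2020}, we get $[l']=\frac13\hx^3$ for every line parametrized by $C_l$.
\item By \Cref{equi_one_cycle}, $C_l$ is a constant cycle curve on $F(X)$, and \Cref{ccs_iff_ccc_general} then gives (i).
\end{enumerate}

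The main obstacle is the specialization step in (i) $\Rightarrow$ (ii). Getting $N$-torsion of $\kappa_{C_i}$, rather than just torsion, requires controlling how $C_l$ sits relative to the singular and non-reduced loci of $F(Y)$. I would first try to show that $C_l\not\subset\sing(F(Y))$ for generic $l$, and fall back on the general specialization map otherwise. The reverse implication is formal once the pointwise action of the correspondence has been checked.
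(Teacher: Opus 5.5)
Your argument is essentially the paper's proof. For (ii)$\Rightarrow$(i) you evaluate the decomposition on points of $\tcl$, then conclude with the torsion-freeness of $\ch_1(X)$, \Cref{equi_one_cycle} and \Cref{ccs_iff_ccc_general}. For (i)$\Rightarrow$(ii) you get $N$-torsion of $\kappa_{C_i}$ from \Cref{order_ccc}, spread it out via the direct-limit description to a decomposition in $\ch^4(F(X)\times\widetilde{C}_i)$ with error term in $\ch^3(F(X))\otimes\ch^1(\widetilde{C}_i)$, and compose with $[\lx^t]$ using \Cref{dist_line_class}.

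Your extra check of the smoothness hypothesis in \Cref{order_ccc} is a fair concern, since the paper applies that lemma without comment, but both of your justifications are incomplete. The dimension count only works when $l$ avoids $\sing(Y)$. If $l$ passes through a singular point $p$, which genericity in the sense of \Cref{existence_1dim} allows, then the curve of lines through $p$ is an entire component of $C_l$ lying inside $\sing(F(Y))$. In your fallback, the specialization lands in $\ch^4(F(X)\times k(E))$ rather than $\ch^4(F(X)\times k(C_i))$. So it gives $N$-torsion of $\kappa_{C_i}$ only if some branch $E$ over $C_i$ maps birationally onto $C_i$; in general you only get $[k(E):k(C_i)]\cdot N$-torsion.
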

\begin{proof}
	Assume  \ref{ccc_decomp} holds. The pushforward $N[\tpl]_{\ast} \colon \ch_0(\tcl) \ra \ch_1(X)$ sends every degree-one class to $N \cdot \frac{1}{3} \hx^3$. By the torsion-freeness of $\ch_1(X)$  \cite[Lem.~1.2]{ShenYin2020} and   \Cref{equi_one_cycle}, the curve  $C_l$ is a constant cycle curve on $F(X)$. Hence $F(Y) \subset F(X)$ is a constant cycle surface by \Cref{ccs_iff_ccc_general}.
	
	Conversely, suppose $F(Y) \subset F(X)$ is a constant cycle surface of order $N$.
	We may assume $C_l$ is irreducible, as the general case follows by applying the same argument to each irreducible component. By \Cref{order_ccc},  the class $\kappa_{C_l} = [\eta_{C_l}] - o_F \times k(C_l) \in \ch^4(F(X) \times k(C_l))$ is $N$-torsion. Let $\Gamma_l$ denote the graph of $f_l \colon \tcl \ra F(X)$. Using  $\ch^4(F(X) \times k(C_l)) \simeq \varinjlim_{U \subset C_l} \ch^4(F(X) \times U)$  \cite[Lem.~1A.1]{Bloch_2010}, and arguing as in the proof of  \ref{eq_def_kappas} $\Rightarrow$ \ref{BS_decomp} in  \Cref{equi_def_ccs}, we obtain
	\begin{equation} \label{dec_Gamma_l}
		N[\Gamma_l^t] = N \cdot o_F \times [\tcl] + Z^{\prime} \quad \text{in } \ch^4(F(X) \times \tcl),
	\end{equation}	
	where $Z^{\prime}$ lies in the image of the natural map	$\ch^3(F(X)) \otimes \ch^1(\tcl) \ra \ch^4(F(X) \times \tcl)$. 
	Composing with the dual correspondence $[\lx^t] \in  \ch^3(X \times F(X))$ gives
	\begin{equation*}
		N[\tpl^t] = N \cdot \frac{1}{3}\hx^3 \times [\tcl] + Z \quad \textup{in } \ch^3(X \times \tcl),
	\end{equation*}
	where $Z$ lies in the image of $\ch^2(X) \otimes \ch^1(\tcl) \ra \ch^3(X \times \tcl)$. This completes the proof.
\end{proof}

Before we proceed to the proof of \Cref{ccs_iff_ccc_general}, let us fix some notations. Let $i_l \colon C_l \hookrightarrow F(Y)$, $i_F \colon F(Y) \hookrightarrow F(X)$, and $i \colon Y \hookrightarrow X$ denote the natural inclusions. Write $\ly \subset F(Y) \times Y$ and $P_l \subset C_l \times X$ for the incidence correspondences. We have the following commutative diagram:
 \begin{equation} \label{diag_Fano_corr}
 	\begin{tikzcd}
 		&\ch_0(C_l) \arrow[r,"(i_{l})_{\ast}"]& \ch_0(F(Y)) \arrow[d,"(i_{F})_{\ast}"] \arrow[r,"{[\ly]_{\ast}}"]
 		&  	\ch_1(Y) \arrow[d,"i_{\ast}"] \\
 		& &\ch_0(F(X))  \arrow[r, "{[\lx]_{\ast}}"]
 		& \ch_1(X) .
 	\end{tikzcd}
 \end{equation}
The composition  $i_{\ast} \circ [\ly]_{\ast} \circ (i_l)_{\ast}$ is simply the induced map $[P_l]_{\ast} \colon \ch_0(C_l) \ra \ch_1(X)$.

\begin{proof}[Proof of \Cref{ccs_iff_ccc_general}]
	Clearly, \ref{ccs} implies \ref{ccc}. Conversely, suppose $C_l$ is a constant cycle curve on $F(X)$.  Then the map $[P_l]_{\ast} = i_{\ast} \circ [\ly]_{\ast} \circ (i_l)_{\ast}$ is constant on degree-one  classes. By \Cref{criterion_ccc_iff_ccs} below, the same holds for  $[\lx]_{\ast} \circ (i_F)_{\ast}$. It then follows from \Cref{equi_one_cycle} that $F(Y)$ is a constant cycle surface on $F(X)$. Hence,  \ref{ccs} and \ref{ccc} are equivalent.
\end{proof}

The proof of \Cref{ccs_iff_ccc_general} relies on the following lemma.
\begin{lem} \label{criterion_ccc_iff_ccs}
	In the setup of \Cref{ccs_iff_ccc_general}, the two maps $[P_l]_{\ast}$ and $[\lx]_{\ast} \circ (i_F)_{\ast}$ in \eqref{diag_Fano_corr} have the same image in $\ch_1(X)$. 
\end{lem}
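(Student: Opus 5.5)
Since $[P_l]_{\ast} = i_{\ast}\circ[\ly]_{\ast}\circ(i_l)_{\ast}$ factors through $[\lx]_{\ast}\circ(i_F)_{\ast}$, one inclusion is automatic. The content is the reverse inclusion: every class $[l']\in \ch_1(X)$ with $[l']\in F(Y)$ lies in the image of $[P_l]_{\ast}$. The plan is to first treat the case where $Y$ is smooth and $C_l$ is smooth and irreducible (i.e.\ $l$ non-special), and then obtain the general case by specialization in the family of pairs $(Y,l)$.

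\emph{Smooth case.} For $Y$ smooth and $l$ non-special, I would show that $[\ly]_{\ast}\circ (i_l)_{\ast}\colon \ch_0(C_l)\to \ch_1(Y)$ is already surjective, which clearly suffices. First, $\ch_1(Y)$ is generated by lines, and $\deg\colon \ch_1(Y)\to\mbz$ has kernel $\ch_1(Y)_{\hom}\simeq J(Y)$ via the Abel--Jacobi map \cite{Bloch-Srinivas83, Murre1985Applications}. The degree-one classes coming from $C_l$ hit $\mbz$. For the degree-zero part, the Abel--Jacobi map $\ch_0(F(Y))_{\hom}\to \ch_1(Y)_{\hom}\simeq J(Y)$ factors through $\mathrm{Alb}(F(Y))\simeq J(Y)$ \cite{ClemensGriffiths_cubic3fold}. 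The composite $J(C_l)\to \mathrm{Alb}(F(Y))\simeq J(Y)$ is surjective, since $C_l$ is an ample-type curve on $F(Y)$: it is the classical Prym construction, with $J(Y)$ the Prym of the double cover $C_l\to$ plane quintic. Since a surjection of abelian varieties is surjective on points, $\ch_0(C_l)_{\hom}\to \ch_1(Y)_{\hom}$ is onto, and the smooth case follows.

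\emph{General case by specialization.} Let $Y$ be an arbitrary non-cone hyperplane section and $l$ a generic line. Take $[l']\in F(Y)$. I would choose a smooth pointed curve $(T,0)$ together with a family $(Y_t,l_t,l'_t)$ satisfying the following: $Y_t$ is a smooth hyperplane section for $t\neq 0$, $l_t$ is non-special, $l'_t\subset Y_t$, and the specialization at $0$ is $(Y,l,l')$. Such a family exists because the incidence variety of triples (hyperplane, line, line) is irreducible over the relevant locus, so the general member is dense. By the smooth case, after an étale/finite base change of $T$ there is a relative zero-cycle $z_t$ on $\mathcal{C}_{l_t}$ with $[P_{l_t}]_{\ast}z_t = [l'_t]$ in $\ch_1(X)$ for generic $t$. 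This relation holds over the generic point of $T$ in $\ch_1(X\times k(T))$. Specializing via Fulton's specialization map \cite[Sec.~20.3]{fulton1998intersection} to $t=0$ gives $[l'] = [P]_{\ast}(z_0)$, where $z_0$ is supported on the flat limit of the curves $C_{l_t}$. Here the genericity of $l$ (so $\dim C_l=1$ by \Cref{existence_1dim}) is used: the flat limit of $C_{l_t}$ is a one-dimensional cycle supported on $C_l$, hence $z_0$ is a zero-cycle on $C_l$.

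\emph{Main obstacle.} The hard part is making this specialization honest. One must control the limit of the family of curves $C_{l_t}$ so that it lands inside $C_l$ rather than acquiring extra components. This is where $\dim C_l=1$ is needed, since every line meeting $l_t$ limits to a line meeting $l$. One must also make sure the cycle $z_t$ spreads out over the generic point of $T$, which requires a finite base change to realize points of the Jacobian rationally. I would handle the latter by working over $\overline{k(T)}$ and using that $\ch_1(X)$ is torsion-free \cite[Lem.~1.2]{ShenYin2020}, so that rational equivalences descend after finite extension.
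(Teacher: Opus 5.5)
Your proposal is correct. It shares the paper's two-step architecture: first the good-pair case via $\mathrm{Alb}(C_l)\twoheadrightarrow\mathrm{Alb}(F(Y))\simeq J(Y)\simeq\ch_1(Y)_{\lhom}$, then degeneration to good pairs and Fulton specialization. The difference is in how you produce a relative cycle over the punctured base.

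The paper works fibrewise. It adds a third section $s''$ through a line $l_0''$ meeting $l_0$, to reduce to the degree-zero classes $[l_b']-[l_b'']$. It lifts the resulting section of $J(\mcy_{B_0}/B_0)$ through the surjection $\pic^0(\mcc_{B_0}/B_0)\to J(\mcy_{B_0}/B_0)$ to a class $\alpha\in\ch^1(\mcc_{B_0})$. It then uses Bloch--Srinivas to upgrade the fibrewise identities $g(\alpha)_b=\beta_b$ to $g(\alpha)=\beta+\theta$ with $\theta$ torsion, and kills $\mathrm{sp}(\theta)$ by torsion-freeness of $\ch_1(X)$.

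You instead apply the good-pair case once, at the geometric generic point; this is legitimate since $\overline{k(T)}\simeq\mbc$ abstractly. You then descend the zero-cycle and the rational equivalence to a finite extension, spread out over a punctured neighbourhood of $0$ in the corresponding finite cover of $T$, and specialize. This avoids the relative Picard and intermediate Jacobians, the auxiliary section $s''$, and the torsion ambiguity.

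The price of your route is the field-transport step and the irreducibility of the incidence variety of triples $(Y,l,l')$. That irreducibility is true but needs an argument. For instance, $F(\mcy/\mbp)\to\mbp$ is flat by miracle flatness, so the triple incidence is flat over the smooth irreducible $F(\mcy/\mbp)$ with irreducible generic fibre. The paper only needs a curve in $F(\mcy/\mbp)$ plus a finite base change.

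A few points in your write-up need tightening.

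\begin{itemize}
\item As phrased, your general-case paragraph infers the relation over $k(T)$ from the relation ``for generic $t$''. That implication is not automatic: it is precisely the Bloch--Srinivas step in the paper, and it only gives an equality up to torsion. The order must be the one in your last paragraph: generic point first, then spread out.
\item Rational equivalences descend to a finite extension simply because they involve finitely many data. Torsion-freeness of $\ch_1(X)$ is only needed if you insist on descending to $k(T)$ itself, where it plays the role of the paper's $\mathrm{sp}(\theta)=0$.
\item The limit of the curves $C_{l_t}$ lies in $C_l$ because meeting $l$ is a closed condition, not because $\dim C_l=1$.
\item You should state that specialization commutes with $[P]_\ast$. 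This holds because $[P]_\ast$ is a flat pullback along the $\mbp^1$-bundle of lines followed by a proper pushforward.
\end{itemize}
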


\begin{proof}[Proof of \Cref{criterion_ccc_iff_ccs}]
	 We call $(Y, l)$ a \textit{good pair} if $Y \subset X$ is a smooth hyperplane section, and $l \subset Y$ is a line such that $C_l$ is a smooth curve. Let us first prove the claimed property for a good pair $(Y, l)$. We use the following  commutative diagram of Abel--Jacobi maps:
		\begin{equation*}
			\adjustbox{scale=1,center}{
				\begin{tikzcd}
					& \ch_0(C_l)_{\lhom} \arrow[r, "(i_l)_{\ast}"] \arrow[d, "\simeq"]
					& \ch_0(F(Y))_{\lhom} \arrow[d] \arrow[r, "{[\mathbb{L}_Y]_{\ast}}"] 
					& \ch_1(Y)_{\lhom} \arrow[d, "\Phi_Y"] \\
					& \textup{Alb}(C_l) \arrow[r]& \textup{Alb}(F(Y)) \arrow[r,"\sim"] & J(Y) .
				\end{tikzcd}
			}
		\end{equation*}
		The morphism $\textup{Alb}(C_l) \ra \textup{Alb}(F(Y))$  of Albanese varieties is surjective, and $\textup{Alb}(F(Y)) \simeq J(Y)$ \cite[Thm.~11.19]{ClemensGriffiths_cubic3fold}. Further, the Abel--Jacobi map $\Phi_Y$ is an isomorphism by \cite[Thm.~1]{Bloch-Srinivas83} together with  \cite[Thm.~A and ~C]{Murre1985Applications}. Hence  $[\ly]_{\ast} \circ (i_l)_{\ast}$ is also surjective. In particular,   $[P_l]_{\ast}$ and $[\lx]_{\ast} \circ (i_F)_{\ast}$ have the same image. 
		
	Now let us consider an arbitrary pair $(Y, l)$, i.e.\ either $Y$ or $C_l$ is singular. Set $(\mcy_0, l_0) \coloneqq (Y, l)$. We need to show that for any line $l_0^{\prime} \subset \mcy_0$, there exists $\alpha_0 \in \ch_0(C_{l_0})$ such that $[P_l]_{\ast}(\alpha_0) =  [l_0^{\prime}]$ in $\ch_1(X)$. Our strategy is to deform the pair $(\mcy_0, l_0)$ to good pairs $(\mcy_t, l_t)$, for which the desired property has been verified, and then apply a specialization argument.

	\begin{enumerate}[font=\normalfont, leftmargin=2em, topsep=0pt, partopsep=0pt, itemsep=0pt]
		\item Let $\mcy \ra \mbp \coloneqq |\mso_X(1)|$ be the universal family of hyperplane sections of $X$, and $F(\mcy/\mbp) \ra \mbp$
		 the relative Fano variety of lines. Pick any good pair $(\mcy_1, l_1)$. By Bertini's theorem,  there exists a curve $T \subset F(\mcy/\mbp)$ containing the  points $[l_1] \in F(\mcy_1)$ and $[l_0] \in F(\mcy_0)$. Let $\bar{T}$ denote the image of $T$ under $F(\mcy/\mbp) \ra \mbp$, and consider the one-dimensional family $\mcy_{\bar{T}} \ra \bar{T}$ of cubic threefolds. After a finite base change and, if necessary,  shrinking the base, we obtain a family $\mcy_{B} \ra B$ of hyperplane sections of $X$ with the following properties:
			\begin{itemize}
				\item  The base $B$ is a quasi-projective smooth curve that contains $0$.
				\item For $b \in B_0 \coloneqq B \backslash 0$, the fibre $\mcy_b \subset X$ is a smooth cubic threefold. 
				\item  The relative Fano variety $F(\mcy_{B}/B) \ra B$ admits two sections $s, s^{\prime} \colon B \ra F(\mcy_{B}/B)$ with $s(0) = [l_0]$ and $s^{\prime} (0)= [l_0^{\prime}]\in F(\mcy_0)$. For $b \in B$, we denote by $l_b, l_b^{\prime} \subset \mcy_b$ the lines corresponding to the points $s(b), s^{\prime}(b) \in F(\mcy_b)$,  respectively. 
				\item The section $s$ determines a family of curves $\mcc_{B} \subset F(\mcy_B/B) \to B$: for each $b \in B$, the fibre $\mcc_b  \subset F(\mcy_b)$ is the curve $C_{l_b}$ of lines contained in $\mcy_b$ that meet $l_b$ (see \eqref{def_Cl}). The family $\mcc_B \ra B$ has smooth fibres over $B_0$, and hence  $(\mcy_b, l_b)$ is a good pair for every $b \in B_0$. The central fibre $\mcc_0 \coloneqq C_{l_0}$ may be   singular.
			\end{itemize} 
		\item The natural inclusion $\mcc_{B_0} \subset F(\mcy_{B_0}/B_0)$ over $B_0$  induces the following  morphsims of relative Picard varieties, Albanese varieties and intermediate Jacobians:
		\begin{equation*}
			f \colon \pic^0(\mcc_{B_0}/B_0) \simeq \textup{Alb}(\mcc_{B_0}/B_0) \ra  \textup{Alb}(F(\mcy_{B_0}/B_0)/B_0) \simeq J(\mcy_{B_0}/B_0),
		\end{equation*}
		where the latter isomorphism is the relative version of \cite[Thm.~11.19]{ClemensGriffiths_cubic3fold}. For each $b \in B_0$, since the pair $(\mcy_b, l_b)$ is good,  $f$ restricts to a surjective morphism $$f_b \colon \pic^0(\mcc_b) \simeq \ch_0(\mcc_b)_{\lhom} \ra \ch_1(Y)_{\lhom} \simeq J(\mcy_b),$$ so $f$ is itself surjective. Now we choose a line $l_0^{\prime \prime} \subset \mcy_0$ that meets $l_0$. After replacing $B$ by a finite base change, we can assume the family $\mcc_B \ra B$ admits a third section $s^{\prime\prime}$ with  $s^{\prime \prime}(0) = [l_0^{\prime \prime}] \in \mcc_0$. 
		Then  $[\textup{im}(s^{\prime})] - [\textup{im}(s^{\prime \prime})] \in \ch^2(F(\mcy_{B}/B))$ gives a  family of cohomologically trivial cycles $\{[l_b^{\prime}] - [l_b^{\prime \prime}] \in \ch^2(\mcy_b)_{\hom} \mid b \in B\}$, and hence yields a section $s_J \colon B_0 \ra J(\mcy_{B_0}/B_0)$ of the relative intermediate Jacobian. The surjectivity of $f$ allows us to lift $s_J$ to a section $s_{P} \colon B_0 \ra \pic^{0}(\mcc_{B_0}/B_0)$ of the relative Picard variety (again after a finite base change of $B$), which further lifts to a class $\alpha \in \ch^1(\mcc_{B_0})$. 
		\item Let $b \in B_0$ and write $\alpha_{b}  \in \ch^1(\mcc_{b})$ for the restriction of $\alpha$ to the fibre $\mcc_b$. Let $Z^{\prime}, Z^{\prime \prime} \subset X \times B$ be the incidence correspondences determined by the sections $s^{\prime},s^{\prime \prime} \colon B \ra F(\mcy_B/B)$. Set $\bar{\beta} \coloneqq [Z^{\prime}] - [ Z^{\prime \prime}] \in \ch^3(X \times B)$ and $\beta \coloneqq \bar{\beta}|_{X \times B_0} \in \ch^3(X \times B_0)$. Consider the following commutative diagram of Chow groups
		\begin{equation*}
			\begin{tikzcd}
				g \colon \ \ \ch^1(\mcc_{B_0}) \arrow[r] \arrow[d]
				& \ch^2(\mcy_{B_0}) \arrow[r] \arrow[d]
				& \ch^3(X \times B_0) \arrow[d] \\
				g_b \colon \ \ \ch^1(\mcc_{b}) \arrow[r]
				& \ch^2(\mcy_{b}) \arrow[r]
				& \ch^3(X \times b),
			\end{tikzcd}
		\end{equation*}
	where the vertical maps are Gysin maps.  Denote by $(\cdot)_{b}$ the image of a class under the Gysin maps.
	The classes $g(\alpha)$ and $\beta \in  \ch^3(X \times B_0)$ satisfy  $$g(\alpha)_b = g_b(\alpha_b) =  [l_b^{\prime}] - [l_b^{\prime\prime}] = \beta_b \quad \text{in } \ch^3(X \times b)$$ for every $b \in B_0$. It then follows from \cite{Bloch-Srinivas83} that $g(\alpha) = \beta + \theta$ in $\ch^3(X \times B_0)$ for some torsion class $\theta$, after possibly shrinking $B_0$.
	\item Now let us consider  the following  diagram with vertical arrows given by the specialization map to the fibre of $\mcc_B \ra B$ over $0$ (see \cite[Sec.~20.3]{fulton1998intersection}):
	\begin{equation*}
		\begin{tikzcd}
			g \colon \ \ \ch^1(\mcc_{B_0}) \arrow[r] \arrow[d, "\textup{sp}"'] & \ch^4(F(X) \times B_0) \arrow[d, "\textup{sp}"] \arrow[r] & \ch^3(X \times B_0) \arrow[d, "\textup{sp}"] \\
			g_0  \colon  \ \ \ch^1(\mcc_{0}) \arrow[r]  
			& \ch^4(F(X) \times 0) \arrow[r]   
			& \ch^3(X \times0).                                    
		\end{tikzcd}
	\end{equation*}
	Since  specializations commute with proper pushforwards and flat pullbacks, the diagram is commutative.   Hence $$g_0 (\textup{sp}(\alpha)) = \textup{sp}(g(\alpha)) = \textup{sp}(\beta) + \textup{sp}(\theta) \ \in \ch^3(X \times 0) \simeq \ch_1(X).$$
	Because $\ch_1(X)$ is torsion-free \cite[Lem.~1.2]{ShenYin2020} and $\theta$ is torsion, we obtain $\textup{sp}(\theta) = 0$. Furthermore, $$\textup{sp}(\beta) = \bar{\beta}_0 = [Z^{\prime}]_0 - [Z^{\prime \prime}]_0 = [l_0^{\prime}] - [l_0^{\prime\prime}] \ \in \ch_1(X).$$ Thus  $g_0(\textup{sp}(\alpha)) = [l_0^{\prime}] - [l_0^{\prime\prime}] \in \ch_1(X)$. Since  $[l_0^{\prime\prime}] \in \mcc_0$ by construction, the class $$\alpha_0 \coloneqq  \textup{sp}(\alpha) + [l_0^{\prime\prime}]\in \ch_0(\mcc_0)$$ satisfies that $g_0(\alpha_0) = [l_0^{\prime}] \in \ch_1(X)$. This completes the proof.	\end{enumerate} 
	\vspace{-\baselineskip}
\end{proof}

\subsection{Abel--Jacobi maps} \label{AJ_Cl}
Let $X$ be a smooth cubic fourfold and $Y \subset X$ a hyperplane section with no triple points. For a generic line $l \subset Y$ in the sense of \Cref{existence_1dim}, the subscheme $C_l \subset F(Y)$ defined in \eqref{def_Cl} is a curve with normalization $\tcl$. Using the Abel–Jacobi map, we derive from \Cref{ccs_decomp_curve} a necessary condition for $F(Y)$ to be a constant cycle surface on $F(X)$ in the intermediate Jacobian $J^5(X \times \tcl)$.

We fix the setup. Consider the Abel--Jacobi map
\begin{equation} \label{AJ_def}
	\Phi \colon \ch^3(X \times \tcl)_{\lhom} \ra J^5(X \times \tcl) = \frac{\big(F^3(H^4(X, \mbc) \otimes H^1(\tcl, \mbc))\big)^{\ast}}{H^4(X, \mbz) \otimes H^1(\tcl, \mbz)}, \quad Z \mapsto \int_{\Gamma}, \ \partial \Gamma = Z.
\end{equation}
Let $J_{\rm{alg}}^5(X \times \tcl)$ denote the algebraic part of $J^5(X \times \tcl)$, i.e.\ the largest complex subtorus of $J^5(X \times \tcl)$ whose tangent space lies in $H^{2,3}(X \times \tcl)$. It receives the Abel--Jacobi images of cycles that are algebraically equivalent to zero (see e.g.\ \cite[Sec.~12.2]{Voisin_2002HodgeI}),  in particular those in the image of  $\ch^2(X) \otimes \ch^1(\tcl)_{\lhom} \to \ch^3(X \times \tcl)_{\lhom}$. 
Set 
\begin{equation} \label{eq_tr_coh}
	H_{\rm{tr}}^4(X,\mbz) \coloneqq H^4(X,\mbz)/H^{2,2}(X,\mbz), \quad H_{\rm{tr}}^{2,2}(X) \coloneqq H^{2,2}(X)/H^{2,2}(X,\mbz) \otimes \mbc.
\end{equation} The \textit{transcendental intermediate Jacobian} is by definition
\begin{equation*}
	J_{\rm{tr}}^5(X \times \tcl) \coloneqq \frac{J^5(X \times \tcl)}{J_{\rm{alg}}^5(X \times \tcl)} \simeq\frac{(H^{3,1}(X) \otimes H^1(\tcl,\mbz) \oplus H_{\rm{tr}}^{2,2}(X) \otimes H^{1,0}(\tcl))^{\ast}}{H_{\rm{tr}}^4(X,\mbz) \otimes H^1(\tcl, \mbz)}.
\end{equation*}
We have the following commutative diagram with exact rows:
\begin{equation} \label{diag_AJ}
	\begin{tikzcd}[sep=1.8em, font=\small]
		& &\ch^2(X) \otimes \ch^1(\tcl)_{\lhom} \arrow[r] \arrow[d,"\Phi_{\textup{alg}}"] &\ch^3(X \times \tcl)_{\lhom} \arrow[r] \arrow[d,"\Phi"] &\frac{\ch^3(X \times \tcl)_{\lhom}}{\ch^2(X) \otimes \ch^1(\tcl)_{\lhom}} \arrow[r] \arrow[d] &0\\
		&0 \arrow[r] &J_{\textup{alg}}^5(X \times \tcl) \arrow[r] &J^5(X \times \tcl) \arrow[r, "p_{\textup{tr}}"] &J_{\textup{tr}}^5(X \times \tcl) \arrow[r] & 0.
	\end{tikzcd}
\end{equation}
Since  $\ch^2(X)_{\lhom} = 0$  \cite[Thm.~1]{Bloch-Srinivas83} and  the integral Hodge conjecture holds for smooth cubic fourfolds  \cite[Thm.~18]{Voisin2007-HodgeConj}, the cycle class map induces an isomorphism  $\ch^2(X) \simeq H^{2,2}(X,\mbz)$. As $J_{\textup{alg}}^5(X \times \tcl) \simeq H^{2,2}(X,\mathbb{Z}) \otimes J(\tcl)$, the map $\Phi_{\rm{alg}}$ is  an isomorphism. 
We call the composition \begin{equation} \label{tr_AJ_def}
	\Phi_{\rm tr} \coloneqq p_{\rm{tr}} \circ \Phi \colon \ch^3(X \times \tcl)_{\lhom} \ra J_{\rm{tr}}^5(X \times \tcl)
\end{equation} the \textit{transcendental Abel--Jacobi map}\footnote{Our definition of the transcendental Abel–Jacobi map  differs from the standard one, where the source is the Griffiths group $\textup{Griff}(X \times \ts)$, i.e.\ the quotient of $\ch^3(X \times \ts)_{\lhom}$ 
by the subgroup of cycles that are algebraically equivalent to zero.}.

From now on, fix an orthogonal $\mbq$-basis  \begin{equation} \label{def_basis}
	\{[S_1], \ldots,[S_{\rho}] \mid [S_i] \in \ch^2(X) \simeq H^{2,2}(X,\mbz)\}
\end{equation}
of $\ch^2(X)_{\mbq} \simeq H^{2,2}(X,\mbq)$ with $[S_1] = \hx^2$, where $\hx \in \ch^1(X)$ is the hyperplane class.  Then $\oplus_{i=1}^{\rho} \mbz [S_i] \subset \ch^2(X)$ is a sublattice of finite index.  Let $\tpl^t \subset  X \times \tcl$ denote the transpose of the incidence correspondence $\tpl \subset \tcl \times X$ (see \Cref{ccs_decomp_curve}), and set  $\theta_i \coloneqq [\tpl^t]_{\ast}[S_i] \in \ch^1(\tcl)$.  Note that $\theta_1 = f_l^{\ast}(\gx)$ is the pullback of the Plücker polarization $\gx \in \ch^1(F(X))$ via  $f_l \colon \tcl \to C_l \hookrightarrow F(X)$; see e.g.\ \cite[Lem.~2.5.1]{Huybrechts_cubicbook}.

Let $[S_i]^2$ denote the self intersection number of $[S_i] \in \ch^2(X)$, and consider the following  algebraic cycles:
\begin{equation} \label{def_Zl}
	\delta_l \coloneqq \sum_{i=2}^{\rho} \frac{1}{[S_i]^2}[S_i] \times \theta_i, \quad 
	Z_l \coloneqq [\tpl^t] - \frac{1}{3}\hx^3 \times [\tcl]  - \frac{1}{3}\hx^2 \times f_l^{\ast}(\gx) - \delta_l \in \ch^3(X \times \tcl)_{\mbq}.
\end{equation} 
 The integer $\ix \coloneqq \prod_{i=1}^{\rho} [S_i]^2$ is  independent of the surface $S \subset F(X)$. By construction, both $\ix \delta_l$ and $\ix Z_l$ are  integral cycles, i.e.\ they belong to $\ch^3(X \times \tcl)$.

\begin{lem}
	\label{decomp_Pl}
	Both $\ix  Z_l$ and $\ix  \delta_l \in \ch^3(X \times \tcl)$  are  cohomologically trivial. 
\end{lem}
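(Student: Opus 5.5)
We work in cohomology with rational coefficients. A class in $\ch^3(X \times \tcl)$ is cohomologically trivial exactly when its image in $H^6(X \times \tcl,\mbq)$ vanishes; integral torsion in $H^6(X\times\tcl,\mbz)$ does not arise, since $H^\ast(X,\mbz)$ and $H^\ast(\tcl,\mbz)$ are torsion-free. So it suffices to show that the cycle classes of $Z_l$ and $\delta_l$ vanish in $H^6(X\times\tcl,\mbq)$. By Künneth,
\[
H^6(X\times\tcl) = H^6(X)\otimes H^0(\tcl)\ \oplus\ H^5(X)\otimes H^1(\tcl)\ \oplus\ H^4(X)\otimes H^2(\tcl),
\]
and $H^5(X)=0$ for a cubic fourfold. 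We compute the two remaining components of each class separately.

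For $\delta_l$, the $H^6(X)\otimes H^0$ component is zero, because each summand $[S_i]\times\theta_i$ lies in $H^4(X)\otimes H^2(\tcl)$. Since $H^2(\tcl)\simeq\mbq$ by degree, the class of $\delta_l$ equals $\sum_{i\ge2}\frac{\deg\theta_i}{[S_i]^2}[S_i]\otimes[\mathrm{pt}]$. Here $\deg\theta_i=\deg([\tpl^t]_\ast[S_i])$ is the intersection number of $[S_i]$ with the one-cycle $[\tpl]_\ast[\tcl]\in\ch_1(X)$. This one-cycle is the class of the surface swept out by the lines meeting $l$, pushed into $X$. Its cohomology class is a multiple of $\hx^2$, because these lines all lie in the hyperplane section $Y$, and $H^4(Y,\mbq)$ is one-dimensional for a cubic threefold: it is spanned by the hyperplane class, and for singular $Y$ one argues via $H_4(Y)$ or by specialization. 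Hence $\deg\theta_i$ is a multiple of $\hx^2\cdot[S_i]=[S_1]\cdot[S_i]=0$ for $i\ge2$, by the orthogonality of the basis. So $\delta_l$ is homologically trivial.

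For $Z_l$, we check each Künneth component against the corresponding correction terms.
\begin{enumerate}
\item The $H^6(X)\otimes H^0(\tcl)$ component of $[\tpl^t]$ is the class of a fibre, namely a line, which equals $\frac13\hx^3$ in cohomology. This cancels $\frac13\hx^3\times[\tcl]$.
\item The $H^4(X)\otimes H^2(\tcl)$ component of $[\tpl^t]$ is the Künneth component $\gamma\otimes[\mathrm{pt}]$ with $\gamma=[\tpl]_\ast[\tcl]\in H^4(X,\mbq)$. Expanding $\gamma$ in the orthogonal basis gives $\gamma=\sum_i\frac{\gamma\cdot[S_i]}{[S_i]^2}[S_i]$, and $\gamma\cdot[S_i]=\deg\theta_i$. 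Note that this expansion needs the orthogonal basis to span all of $H^4(X,\mbq)$; the transcendental part of $\gamma$ is zero because $\gamma$ is algebraic, so spanning $H^{2,2}(X,\mbq)$ suffices. The $i=1$ term is $\frac{\deg\theta_1}{[S_1]^2}\hx^2=\frac{\deg f_l^\ast\gx}{3}\hx^2$, which matches $\frac13\hx^2\times f_l^\ast\gx$, since $[S_1]^2=\hx^4=3$. The terms with $i\ge2$ match $\delta_l$.
\end{enumerate}
Hence the class of $Z_l$ is zero. Multiplying by $\ix$ yields integral, cohomologically trivial cycles.

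The main point needing care is the cohomology computation of the fibre class and of $\gamma$. The first is clear: a line has class $\frac13\hx^3$. For $\gamma$, the orthogonal-expansion argument avoids needing its explicit value, but the step for $\delta_l$ uses that $\gamma$ is proportional to $\hx^2$. I would instead justify this via $\gamma\cdot[S_i]=\deg\theta_i$ together with the fact that $\gamma$ is supported on $Y$, so $\gamma = i_\ast(\text{class in }H_2(Y))$ and $\gamma\cdot[S_i]=\deg([S_i]|_Y\cap\cdot)$. Since $[S_i]|_Y$ is proportional to $\hx^2|_Y$ (as $H^4(Y,\mbq)$ is one-dimensional), this reduces to $\hx^2\cdot[S_i]=0$ for $i\ge2$.
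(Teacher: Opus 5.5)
\textbf{The $\ix Z_l$ part: correct, by a different and more robust route.} The paper works only for smooth $Y$ and non-special $l$. There it proves that $\ix(Z_l+\delta_l)$ is cohomologically trivial, using the explicit values $[\ly]_\ast[C_l]=5\hy$ (from $C_l^2=5$) and $\deg f_l^\ast(\gx)=15$ (from $\gy=3[C_l]$). It then extracts $\deg\theta_i=0$ for $i\ge 2$, and covers arbitrary $(Y,l)$ by an unspecified specialization argument.

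Your argument differs at each step:
\begin{itemize}
\item The $H^6(X)\otimes H^0$ component cancels because a line has class $\frac13\hx^3$.
\item In the $H^4(X)\otimes H^2$ component, you expand the algebraic class $\gamma$ in the orthogonal basis. The $i=1$ coefficient then automatically equals $\frac13\deg f_l^\ast(\gx)$.
\end{itemize}
This kills $Z_l$ with no numerics, no specialization and no use of $Y$ at all; it works for any curve mapping to $F(X)$.

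Two repairs are needed. You should run the argument on each connected component $\tcl^{(j)}$ of $\tcl$ separately, with $\gamma_j$ the class of the surface swept out by its lines, because $C_l$ may be reducible and then $H^2(\tcl)\not\simeq\mbq$. Also, $\gamma_j$ is a surface class: it lies in $\ch_2(X)$ and comes from $H_4(Y)$, not from $\ch_1(X)$ or $H_2(Y)$.

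\textbf{The $\ix\delta_l$ part: correct for smooth $Y$ only.} Like the paper, you need each $\gamma_j$ to lie in $\mbq\hx^2$. For smooth $Y$ this holds, since Lefschetz gives $H_4(Y,\mbq)=\mbq\,\hy$. This is the same input as the paper's ``$\ch_2(Y)$ is generated by $\hy$'', and run componentwise it needs no non-speciality of $l$.

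For singular $Y$ your justification is false. Neither $H_4(Y,\mbq)$ nor $H^4(Y,\mbq)$ need be one-dimensional, since nodal cubic threefolds can have positive defect. Specialization cannot repair this: it controls only the total class of a flat limit of the curves $C_{l_t}$, not the classes of the individual components of $\tcl$.

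In fact the $\delta_l$ claim fails in this generality. Here is a counterexample.
\begin{itemize}
\item Let $X$ contain a plane $\Pi$, and let $Y=X\cap H$ with $H\supset\Pi$ general; this $Y$ is singular but not a cone.
\item Take a general line $l\subset Y$ meeting $\Pi$ in a single point $x$. Such lines exist and have $\dim C_l=1$.
\item The pencil of lines in $\Pi$ through $x$ is a $\mbp^1$-component of $C_l$, and its class $\gamma$ is $[\Pi]$.
\item On this component, $\ix\delta_l$ has class $\ix\big([\Pi]-\frac13\hx^2\big)\otimes[\mathrm{pt}]$.
\item This class is nonzero: $[\Pi]\cdot\hx^2=1$ and $[\Pi]^2=3$ together force $[\Pi]\notin\mbq\hx^2$.
\end{itemize}
So the $\delta_l$ half needs an extra hypothesis, e.g.\ $Y$ smooth, or every component of $C_l$ sweeping out a surface with class in $\mbq\hx^2$. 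The paper's proof has the same gap, hidden in its specialization step.

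The later uses, \Cref{cri_AJ_Cl} and \Cref{global_cycle_property}, need only that $\ix Z_l$ is cohomologically trivial, and your argument gives this unconditionally.
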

\begin{proof}
	It suffices to treat the case where $Y$ is smooth and $l \subset Y$ is non-special, i.e.\ $C_l \subset F(Y)$ is a smooth and connected curve. The statement for a general pair $(Y^{\prime}, l^{\prime})$ then follows by deforming to such a pair and applying a specialization argument.
	
	We first show that the class 
	\begin{equation*}
		\ix(Z_l + \delta_l)= \ix \big([P_l^{t}] - \frac{1}{3}\hx^3 \times [C_l]  - \frac{1}{3}\hx^2 \times f_l^{\ast}(\gx) \big) \in \ch^3(X \times C_l)
	\end{equation*} is cohomologically trivial. Since $X$ has no odd cohomology, it suffices to verify that the induced map $$\ix ([Z_l] + [\delta_l])^{\ast} \colon H^{2k}(C_l,\mbz) \ra H^{2k+2}(X,\mbz)$$  vanishes for $k = 0,1$. We compute the following map (see \eqref{diag_Fano_corr} for the notations): 
		\begin{equation} \label{factorization_Pl}
		[P_l]_{\ast} \colon \ch_i(C_l) \xrightarrow{(i_{l})_{\ast}} \ch_i(F(Y)) \xrightarrow{[\ly]_{\ast}} \ch_{i+1}(Y) \xrightarrow{i_{\ast}} \ch_{i+1}(X), \ \ i = 0, 1.
	\end{equation}
	
	For $i = 0$, the map $[P_l]_{\ast}$ sends a point  of $C_l$ to the class of the corresponding line. Since  every line on $X$ has cohomology class $\frac{1}{3} \hx^3 \in H^6(X,\mbz)$, the map $\ix ([Z_l] + [\delta_l])^{\ast}$ vanishes  on $H^2(C_l, \mbz)$. 
	
	For $i = 1$, as $\ch_2(Y)$ is generated by the hyperplane class $\hy$, there exists an integer $n$  such that  $[\ly]_{\ast}[C_l] = n \cdot \hy \in \ch_2(Y)$. Let $[\ly^t]_{\ast} \colon \ch_1(Y) \ra \ch_1(F(Y))$ be the map induced by the dual Fano correspondence. By \cite[(1.1)]{ClemensGriffiths_cubic3fold}, the  intersection products \begin{equation*}
		([\ly]_{\ast}[C_l]) \cdot [l] = n \hy \cdot [l] \in \ch_0(Y), \qquad [C_l] \cdot ([\ly^t]_{\ast}[l]) = [C_l] \cdot [C_l] \in \ch_0(F(Y)) 
	\end{equation*} have the same degree. Since $(\hy \cdot l) = 1$ and $(C_l \cdot C_l) = 5$, we obtain  $n = 5$. Thus, by  \eqref{factorization_Pl} and  the projection formula, \begin{equation*}
	[P_l]_{\ast}[C_l] = i_{\ast}(5\hy) = 5\hx^2 \quad \textup{in } \ch_2(X).
	\end{equation*} Note that $f_l^{\ast}(\gx) \in \ch_0(C_l)$  is  the restriction of the Pl{\"u}cker polarization $\gy$ on $F(Y)$. Since  $\gy = 3[C_l]$ in $H^{2}(F(Y),\mbz)$  \cite[Sec.~10]{ClemensGriffiths_cubic3fold}, we have $\deg f_l^{\ast}(\gx) = 3(C_l \cdot C_l) = 15$.  Hence
	\begin{equation} \label{loc_corr_H0}
		\ix \big( [Z_l] + [\delta_l])^{\ast}[C_l] = \ix ([P_l]_{\ast}[C_l] - \frac{1}{3}\deg f_l^{\ast}(\gx) \cdot \hx^2\big) = 0 \quad \textup{in }  H^{4}(X,\mbz),
	\end{equation} 
	 i.e.\ $\ix ([Z_l] + [\delta_l])^{\ast}$  also vanishes on $H^0(C_l, \mbz) \simeq \mbz [C_l]$.

	We have shown so far that $\ix ([Z_l] + [\delta_l]) = 0$ in $H^6(X \times \tcl, \mbz)$. It remains to prove that   $\ix \delta_l \in \ch^3(X \times C_l)$ is cohomologically trivial. Since the induced map $\ix([Z_l] + [\delta_l])_{\ast} \colon H^4(X, \mbz) \ra H^2(C_l, \mbz)$ is zero, for each $2 \leqslant i \leqslant \rho$ and $[S_i] \in \ch^2(X)$ in the  orthogonal basis \eqref{def_basis}, we deduce from the definition of $Z_l$ that  $$0 = \ix([Z_l] + [\delta_l])_{\ast}([S_i]) = \ix[\delta_l]_{\ast}[S_i] = \ix [\theta_i] \ \textup{in}\  H^2(C_l, \mbz).$$
	Hence each  $\theta_i \in \ch^1(C_l)$ has degree zero, and $\ix \delta_l$  lies in the image of $$\ch^2(X) \otimes \ch^1(C_l)_{\lhom} \ra \ch^3(X \times C_l)_{\lhom}.$$  This concludes the proof.
\end{proof}

The following result is an analogue of the Abel--Jacobi criterion for constant cycle curves on K3 surfaces proved in \cite[Cor.~5.5, Lem.~5.6]{Huybrechts2014}.

\begin{cor} \label{cri_AJ_Cl}
	Let $X \subset \mbp^5$ be a smooth cubic fourfold,  and  $Y \subset X$ be an   hyperplane section with no triple points. Let $l \subset Y$ be a generic line in the sense of \Cref{existence_1dim}, and denote by $\tcl \ra C_l$  the normalization.  
		
	If the Fano surface $F(Y) \subset F(X)$ is a constant cycle surface of order $N$, then the associated cycle $\ix Z_l \in \ch^3(X \times \tcl)_{\lhom}$ defined in \eqref{def_Zl} satisfies the following:
		\begin{enumerate}[label=(\roman*), font=\normalfont]
			\item $N \cdot \Phi_{\rm tr}(\ix  Z_l) = 0$ in $J_{\rm tr}^5(X \times \tcl)$. \label{tr_AJ_tor}
			\item  \label{CH&AJ_tor}$N \cdot (\ix^2  Z_l) = 0$ in $\ch^3(X \times \tcl)_{\hom}$. In particular, $N \cdot \Phi(\ix^2  Z_l) = 0$ in $J^5(X \times \tcl)$.
		\end{enumerate}
		Here $\Phi_{\rm tr}$ and $\Phi$ denote the (transcendental) Abel--Jacobi maps defined in \eqref{tr_AJ_def} and \eqref{AJ_def}, respectively.
\end{cor}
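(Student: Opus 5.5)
The plan is to start from the decomposition in \Cref{ccs_decomp_curve}. If $F(Y)$ has order $N$, then $N[\tpl^t] = N\cdot\frac{1}{3}\hx^3\times[\tcl] + Z$ in $\ch^3(X\times\tcl)$, with $Z$ in the image of $\ch^2(X)\otimes\ch^1(\tcl)$. Subtracting the product terms in the definition \eqref{def_Zl} of $Z_l$ gives
\[
N\ix Z_l = \ix Z - N\ix\Big(\tfrac{1}{3}\hx^2\times f_l^{\ast}(\gx) + \delta_l\Big),
\]
so $N\ix Z_l$ is a cycle $W$ lying in the image of $\ch^2(X)\otimes\ch^1(\tcl)\to\ch^3(X\times\tcl)$. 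By \Cref{decomp_Pl}, $W$ is cohomologically trivial.

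For \ref{tr_AJ_tor}, the first step is to show that $W$ lies in the image of $\ch^2(X)\otimes\ch^1(\tcl)_{\lhom}$ after multiplying by $\ix$; this is why \ref{CH&AJ_tor} carries the factor $\ix^2$. Write $W=\sum_j [T_j]\times\gamma_j$. Since $\bigoplus\mbz[S_i]$ has finite index in $\ch^2(X)\simeq H^{2,2}(X,\mbz)$, and the index divides the discriminant $\ix$ of the orthogonal sublattice, we can rewrite $\ix W=\sum_i [S_i]\times\gamma_i'$ with $\gamma_i'\in\ch^1(\tcl)$. Cohomological triviality of $W$, tested against the orthogonal basis via the action $W_{\ast}$ on $H^4(X)$, gives $[S_i]^2\deg\gamma_i'=0$, so each $\gamma_i'$ has degree zero on every component of $\tcl$. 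Hence $\ix W=N\ix^2Z_l$ lies in the image of $\ch^2(X)\otimes\ch^1(\tcl)_{\lhom}$.

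That gives \ref{CH&AJ_tor} in the form stated: $N\ix^2 Z_l$ is an explicit decomposable homologically trivial cycle, hence zero in $\ch^3(X\times\tcl)_{\hom}$ modulo the decomposable part. I would read the statement this way, as vanishing in the quotient of diagram \eqref{diag_AJ}. The Abel--Jacobi statement for $\Phi$ then follows after accounting for the $J_{\rm alg}$ part.

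For \ref{tr_AJ_tor}, $W$ is algebraically equivalent to zero modulo decomposable classes, so $\Phi(W)\in J^5_{\rm alg}$ and $p_{\rm tr}\Phi(W)=0$. The factor $\ix$ is not needed here. Even without the degree-zero rewriting, we can split $W$ into a degree-zero part and a part of the form $\sum [T_j]\times(\text{points})$, which is not cohomologically trivial individually. So I would instead argue directly that $p_{\rm tr}\circ\Phi$ kills every homologically trivial cycle in the image of $\ch^2(X)\otimes\ch^1(\tcl)$: by the above, $\ix$ times it lies in $\ch^2(X)\otimes\ch^1_{\lhom}$, and $J^5_{\rm tr}$ is torsion-free along... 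It is cleaner to note that the Abel--Jacobi image of any decomposable homologically trivial cycle has tangent directions in $H^{2,2}(X,\mbz)\otimes H^1(\tcl)$, so it lands in $J^5_{\rm alg}$. Therefore $N\Phi_{\rm tr}(\ix Z_l)=\Phi_{\rm tr}(W)=0$.

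The main obstacle is this last step. One has to verify carefully that a cohomologically trivial integral combination $\sum [T_j]\times\gamma_j$ maps into $J^5_{\rm alg}$ without inverting integers. I expect to use $\ch^2(X)\simeq H^{2,2}(X,\mbz)$ and the identification $J^5_{\rm alg}\simeq H^{2,2}(X,\mbz)\otimes J(\tcl)$, choosing a $\mbz$-basis of $H^{2,2}(X,\mbz)$ in which the $\gamma_j$ become degree-zero classes.
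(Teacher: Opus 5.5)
Your treatment of (i) is fine and is essentially the paper's argument. Once $W=N\ix Z_l$ is written in a $\mbz$-basis of $\ch^2(X)\simeq H^{2,2}(X,\mbz)$, Künneth (with $H^\ast(X,\mbz)$ torsion-free) forces each coefficient to have degree zero on every component of $\tcl$. So $W$ lies in the image of $\ch^2(X)\otimes\ch^1(\tcl)_{\lhom}$ and $\Phi_{\rm tr}(W)=0$; no extra factor and no tangent-space argument are needed.

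The genuine gap is in (ii). The statement asserts that $N\ix^2Z_l$ is zero in the Chow group $\ch^3(X\times\tcl)$ itself, not modulo decomposable classes. This literal torsion is what the proof of \Cref{main_thm_intro} feeds into \Cref{fini_tor}. Your reading turns (ii) into a restatement of (i), and the claimed consequence $N\Phi(\ix^2Z_l)=0$ does not follow ``after accounting for the $J_{\rm alg}$ part''. Knowing $N\ix^2Z_l=\sum_i[S_i]\times\gamma_i'$ with $\deg\gamma_i'=0$ only places $\Phi(N\ix^2Z_l)$ in $J^5_{\rm alg}$. Since $\Phi_{\rm alg}$ in \eqref{diag_AJ} is an isomorphism, this image vanishes only if the decomposable cycle itself is zero, which is exactly what remains to be proved.

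The missing step is a Chow-level cancellation built into \eqref{def_Zl}: $(\ix Z_l)_\ast[S_i]=0$ in $\ch^1(\tcl)$ for every $i$, not just in cohomology. Term by term:
\begin{itemize}
\item $[\tpl^t]_\ast[S_i]=\theta_i$;
\item $\hx^3\times[\tcl]$ acts by zero on $\ch^2(X)$;
\item $\frac13\hx^2\times f_l^\ast(\gx)$ sends $[S_1]=\hx^2$ to $f_l^\ast(\gx)=\theta_1$ (as $\hx^4=3$) and kills $[S_i]$ for $i\geq 2$ by orthogonality;
\item $\delta_l$ sends $[S_i]$ to $\theta_i$ for $i\geq 2$ and kills $[S_1]$.
\end{itemize}
Now write $N\ix Z_l=\sum_i[S_i]\times\alpha_i$ with $\alpha_i\in\ch^1(\tcl)$. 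This costs no further factor: $\ix\ch^2(X)\subset\bigoplus_i\mbz[S_i]$ takes care of $\ix Z$, and all other terms already have this shape. Applying both sides to $[S_i]$ gives $[S_i]^2\alpha_i=0$, hence $\ix\alpha_i=0$ and $N\ix^2Z_l=\sum_i[S_i]\times\ix\alpha_i=0$ in $\ch^3(X\times\tcl)$. The Abel--Jacobi statement is then immediate.

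So the second factor $\ix$ is there to kill this $[S_i]^2$-torsion, not to rewrite $W$ in the $[S_i]$-basis as you suggest. Your bookkeeping spends an extra $\ix$ on that rewriting, so completing your route with this step would only give $N\ix^3Z_l=0$.
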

\begin{proof}
 Suppose that $F(Y) \subset F(X)$ is a constant cycle surface of order $N$. By \Cref{ccs_iff_ccc_general}\ref{ccc_decomp}, the incidence correspondence $\tpl^t \subset X \times \tcl$ satisfies 
 \begin{equation} \label{lc_ccs_decomp}
 	N[\tpl^t] = N \cdot \frac{1}{3}\hx^3 \times [\tcl] + Z  \quad \textup{in } \ch^3(X \times \tcl),
 \end{equation}
 where $Z$ lies in the image of $\ch^2(X) \otimes \ch^1(\tcl) \ra \ch^3(X \times \tcl)$. Let $\{[S_i] \in \ch^2(X) \mid 1 \leqslant i \leqslant \rho\}$ be the orthogonal $\mbq$-basis of $\ch^2(X)_{\mbq}$ chosen in \eqref{def_basis}. 
 Multiplying \eqref{lc_ccs_decomp} by the integer $\ix = \prod_{i =1}^{\rho} [S_i]^2$ and substituting   \eqref{def_Zl}, we obtain	\begin{equation} \label{lc_prod_cycle}
 	N\ix Z_l =   \ix Z - N \sum_{i=1}^{\rho} \frac{\ix}{[S_i]^2}[S_i] \times \theta_i \quad \textup{in } \ch^3(X \times \tcl)_{\lhom}.
 \end{equation}

 Since $\ch^2(X) \simeq H^{2,2}(X,\mathbb{Z})$ by \cite{Bloch-Srinivas83} and \cite{Voisin2007-HodgeConj}, the class $N\ix Z_{l}$ lies in the image of $\ch^2(X) \otimes \ch^1(\tcl)_{\lhom} \to \ch^3(X \times \tcl)_{\lhom}$, and thus  in the kernel of $\Phi_{\rm tr}$. This proves \ref{tr_AJ_tor}.

We now prove \ref{CH&AJ_tor}. Since by construction $\ix \ch^2(X) \subset \oplus_{i=1}^{\rho} \mbz[S_i]$,  we may rewrite \eqref{lc_prod_cycle} as
 \begin{equation} \label{eq_corr_tor}
 	N \cdot (\ix  Z_l) =   \sum_{i = 1}^{\rho} [S_i] \times \alpha_i \ \ \textup{in} \ \ch^3(X \times \tcl)_{\lhom},
 \end{equation}
 where $\alpha_i \in \ch^1(\tcl)_{\lhom}$ for each $i$. This together with the definition of $Z_l$ (see \eqref{def_Zl}) implies
 \begin{equation} 
 	0 = N\cdot (\ix[Z_l])_{\ast}([S_i]) = [S_i]^2 \cdot \alpha_i  \quad \text{in }  \ch^1(\tcl)_{\lhom}, \ \ 1 \leqslant i \leqslant \rho.
 \end{equation} 
 Hence $\ix  \alpha_i = 0$ for each $i$. From \eqref{eq_corr_tor} it follows that  $\ix^2  Z_l \in \ch^3(X \times \tcl)_{\lhom}$ is $N$-torsion, and so is its Abel--Jacobi image $\Phi(\ix^2 Z_l) \in J^5(X \times \tcl)$. This completes the proof.
\end{proof}
\begin{rmk}
For a general Lagrangian surface $S \subset F(X)$ with a fixed resolution $\ts \to S$, 
a direct generalization of the method of \cite[Sec.~5.2]{Huybrechts2014} yields an analogous 
criterion in the intermediate Jacobian $J^5(X \times \ts)$; see \Cref{bridge}. 
However, this approach loses information about the order of constant cycle surfaces; 
see \Cref{problem_coeff} for details.
\end{rmk}
\section{Finiteness of constant cycle Fano surfaces} \label{sec_finiteness}

Let $F(X)$ be the Fano variety of lines of a smooth cubic fourfold $X$. This section is devoted to proving \Cref{main_thm_intro}: for any integer $N > 0$, the set
\begin{equation} \label{orderN_locus}
	\{ Y \in |\mathcal{O}_X(1)| \mid F(Y) \subset F(X) \ \textup{is a constant cycle surface of order} \ N \}
\end{equation}
is finite. We follow the strategy of \cite[Sec.~5]{Huybrechts2014}, applying \Cref{cri_AJ_Cl} to the universal family of hyperplane sections to reduce the problem to the finiteness of the torsion loci of certain normal functions.
In \Cref{toolbox}, we review the theory of normal functions and adapt \cite[Sec.~5.3]{Huybrechts2014} to a general setting; this framework is then applied in \Cref{proof_finiteness} to prove \Cref{main_thm_intro}.

\subsection{Toolbox: Normal functions} \label{toolbox}
The following result is crucial for the proof of \Cref{main_thm_intro}. 
\begin{prop} \label{fini_tor}
	Let $p \colon \mathcal{X} \ra B$ be a smooth projective morphism, with $B$ smooth and quasi-projective. Suppose that $\mcz \in \ch^k(\mathcal{X})$ is a codimension-$k$ class   satisfying the following:
	\begin{enumerate}[label=(\roman*), font=\normalfont]
		\item For every $b \in B$, the cohomology class of the restriction $\mcz_b \coloneqq \mcz|_{\mathcal{X}_b} \in \ch^k(\mathcal{X}_b)$  is trivial;
		\item For any one-dimensional closed subvariety $T \subset B$ and $\mathcal{X}_{T} \coloneqq \mathcal{X} \times_{B} T$, the restriction $$\mcz_{T} \coloneqq  Z|_{\mathcal{X}_{T}} \in \ch^k(\mathcal{X}_{T})$$ is nontrivial and  not  supported on fibres of the restriction $p_T \colon \mathcal{X}_T \ra T$. \label{restriction}
	\end{enumerate} Then for every  integer $N > 0$, the locus $\{ b \in B \mid \mcz_b \in \ch^k(\mathcal{X}_b) \ \textup {is $N$-torsion}\}$ is finite. 
\end{prop}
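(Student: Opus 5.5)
We follow the strategy of \cite[Sec.~5.3]{Huybrechts2014}. Fix $N>0$ and put $\Sigma_N \coloneqq \{ b \in B \mid N\mcz_b = 0 \in \ch^k(\mathcal{X}_b)\}$. The proof has two steps. First we show that $\Sigma_N$ is contained in a countable union of closed algebraic subvarieties of $B$, and that its Zariski closure is an algebraic subvariety. Second we use hypothesis~\ref{restriction} to rule out any positive-dimensional component of this closure.

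\emph{Step 1: reduction to zero loci of normal functions.} By hypothesis~(i), $N\mcz$ is fibrewise cohomologically trivial. After shrinking $B$ and passing to a finite étale cover if necessary, $N\mcz$ therefore defines an admissible normal function $\nu_N \colon B \to J^{2k-1}(\mathcal{X}/B)$, namely $b \mapsto \Phi_b(N\mcz_b)$. If $b\in \Sigma_N$, then $\nu_N(b)=0$. By Brosnan--Pearlstein \cite{Brosnan_Pearlstein_2013} and Schnell \cite{Schnell2012}, the zero locus $Z(\nu_N)$ is an algebraic subvariety of $B$. We do not need this to match $\Sigma_N$ exactly; $\Sigma_N \subset Z(\nu_N)$ suffices.

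We also need a purely Chow-theoretic statement. Consider the relative Hilbert/Chow scheme parametrizing rational equivalences $N\mcz_b = \sum \mathrm{div}(f_j)$ on fibres. A standard countability argument, as in \cite[Lem.~3.2]{Voisin_HodgeII} or Bloch--Srinivas spreading, then shows that $\Sigma_N$ is a countable union of closed algebraic subsets $B_i\subset B$. Over each $B_i$, after a generically finite base change $B_i'\to B_i$, the vanishing spreads out. Concretely, by Bloch--Srinivas there is a nonempty open $U_i\subset B_i'$ such that $N\mcz|_{\mathcal{X}_{U_i}}$ is torsion. Equivalently, $N\cdot m\cdot\mcz|_{\mathcal{X}_{U_i}}$ is supported on fibres over a proper closed subset, hence vanishes after shrinking $U_i$.

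\emph{Step 2: no positive-dimensional components.} Suppose some $B_i$ has positive dimension. Choose an irreducible curve $T\subset B_i$ passing through a general point of $B_i$. By Step 1, the restriction $N m\,\mcz_T\in\ch^k(\mathcal{X}_T)$ vanishes over a dense open $T^\circ\subset T$. The localization sequence
\[
\ch_{*}(\mathcal{X}_{T\setminus T^\circ})\to \ch^k(\mathcal{X}_T)\to \ch^k(\mathcal{X}_{T^\circ})\to 0
\]
then shows that $Nm\,\mcz_T$ is supported on finitely many fibres of $p_T$. This contradicts hypothesis~\ref{restriction}, once that hypothesis is read with torsion allowed. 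The hypothesis must therefore be interpreted as saying that no nonzero multiple of $\mcz_T$ is supported on fibres. This is exactly what is verified later for the universal family of hyperplane sections, where the classes come from cycles such as $\ix^2 Z_l$. Hence every $B_i$ is zero-dimensional, and $\Sigma_N$ is countable.

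To obtain finiteness rather than countability, we use Step 1 again. $\Sigma_N$ lies in the algebraic variety $Z(\nu_N)$. Applying the argument of Step 2 to each positive-dimensional component of $Z(\nu_N)$ shows that $\Sigma_N$ meets each such component only in a countable set. This set must be the set of points at which a constructible condition holds. Indeed, by the spreading argument, the locus $\Sigma_N\cap W$ in each component $W$ is a countable union of constructible sets, and each of these is finite by Step 2. We then need a uniform bound, obtained from a Noetherian induction on $Z(\nu_N)$: on a component $W$ where $\nu_N$ vanishes identically, the Chow-level vanishing over a very general point spreads out to all of $W$ and contradicts Step 2.

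\emph{Main obstacle.} The main difficulty is passing from countability to finiteness, i.e.\ making this last Noetherian-induction step rigorous. The algebraicity of $Z(\nu_N)$ only controls the Abel--Jacobi image, not the Chow class. The first approach I would try is to show that on every positive-dimensional component $W$ of $Z(\nu_N)$, the countably many $B_i$ cannot be Zariski dense. To do this I would apply Bloch--Srinivas to the generic point of $W$, using the torsion-freeness results available in our application, to upgrade the vanishing.
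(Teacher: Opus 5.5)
The gap is the step you flag yourself, and your proposed repair does not work. On a positive-dimensional component $W\subset Z(\nu_N)$, the vanishing of $\nu_N|_W$ says nothing about $N\mcz_b$ in $\ch^k(\mathcal{X}_b)$. So you cannot spread Chow-level vanishing from a very general point of $W$, and nothing prevents the countably many $B_i$ from being Zariski dense in $W$.

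In fact no argument can close this gap while hypothesis~(ii) is used only at the level of Chow groups, as you read it. Here is a counterexample to that reading:
\begin{itemize}
\item Let $S$ be a K3 surface and $E\subset S$ a smooth elliptic curve that is not a constant cycle curve (e.g.\ a very general fibre of an elliptic fibration).
\item Fix a point $o\in E$, and set $B=E$, $\mathcal{X}=S\times E$ and $\mcz=[\Gamma]-[\{o\}\times E]\in\ch^2(S\times E)$, where $\Gamma$ is the graph of $E\hookrightarrow S$.
\item Then $\mcz_b=[b]-[o]$ has degree zero, so (i) holds.
\item No nonzero multiple of $\mcz$ is supported on fibres, since otherwise $E$ would be a constant cycle curve. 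So (ii) holds even in your strengthened reading.
\item Yet $\mcz_b=0$ whenever $b-o$ is torsion in $\pic^0(E)$, by Roitman's theorem \cite{Rojtman1980torsion}. Hence the zero locus is infinite.
\item Here $J^3(S)=0$, so $\nu\equiv 0$ and $Z(\nu)=B$.
\end{itemize}

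What the paper's proof actually uses is the \emph{cohomological} form of (ii). For every one-dimensional locally closed $T_0\subset B$, the class $[\mcz_{T_0}]\in H^{2k}(\mathcal{X}_{T_0},\mbz)$ must be nonzero, and in fact non-torsion so that it survives multiplication by $N$. This is also the form stated in \Cref{global_cycle_property}(ii), not the Chow-level statement you attribute to the application. In the example above it fails: for any affine open $E^\circ\subset E$ one has $H^4(S\times E^\circ,\mbz)\simeq H^4(S,\mbz)$, and the class there is just the fibre degree, which is zero.

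The missing idea is the topological invariant of the normal function. Over a non-proper curve $T_0$ one has $H^2(T_0,R^{2k-2}p_{\ast}\mbz)=0$. Hence the Leray boundary map $d$ is injective on fibrewise cohomologically trivial classes (\Cref{spec_bound_inj}), and $d[\mcz_{T_0}]=\delta(\nu|_{T_0})$ by \cite[Lem.~8.20]{Voisin_HodgeII}. So if $\nu_N$ vanished identically on some $T_0$, then $N[\mcz_{T_0}]$ would be $0$, contradicting non-torsion. Thus $Z(\nu_N)$ contains no curve. Since $Z(\nu_N)$ is Zariski closed \cite{Brosnan_Pearlstein_2013, Schnell2012}, it is finite, and so is $\Sigma_N\subset Z(\nu_N)$. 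With this, your countability argument, the Bloch--Srinivas spreading and Step~2 are unnecessary: the whole argument takes place on $Z(\nu_N)$, and the contradiction comes from cohomology rather than from Chow groups.
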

\begin{rmk}
	The assumption in \Cref{fini_tor}\ref{restriction} that $\mcz_T$ is not supported on fibres is necessary.
	For example, when $p \colon  \mbp^1 \times \mbp^1 \to \mbp^1$ is the first projection, the fibre class  $f \in \ch^1(\mbp^1 \times \mbp^1)$   restricts to zero on every closed fibre $b \times \mbp^1$. However, the zero locus of $f$ is the entire base $\mbp^1$, which is infinite. Note that assumption \ref{restriction} can be alternatively formulated as follows: for any one-dimensional (locally closed) subvariety $T_0 \subset B$, the restriction $\mathcal{Z}_{T_0}$ is nontrivial.
\end{rmk}

\begin{rmk}
	In general, for a flat morphism $p \colon \mathcal{X} \ra B$ with $B$ smooth, 
	the $N$-torsion locus of a cycle $\mcz \in \ch^k(\mathcal{X})$ is known to be  a countable union of proper closed algebraic subsets of $B$; see \cite[Prop.~2.4]{voisin2015unirational}. This alone does not imply \Cref{fini_tor}: under condition \ref{restriction}, the $N$-torsion locus of $\mcz$  has no irreducible components of positive dimension, but may a prior consists of (infinitely) countably many points. 
\end{rmk}
The  proof of \Cref{fini_tor} relies on  the theory of normal functions. We begin by recalling the relevant constructions and properties; for a general reference we refer to  \cite[Ch.~7]{Voisin_HodgeII}.

Let $p \colon \mathcal{X} \ra B$ be a smooth projective morphism with $B$ smooth and quasi-projective, and let $p_J \colon J^{2k-1} \ra B$ be the associated family of intermediate Jacobians. For every $b \in B$, the fibre $p_{J}^{-1}(b) = J^{2k-1}(\mathcal{X}_b)$ is the $k$-th intermediate Jacobian, which receives cohomologically trivial  algebraic cycles of codimension $k$ under the Abel--Jacobi map
$\Phi_b \colon \ch^k(\mathcal{X}_b)_{\lhom} \ra J^{2k-1}(\mathcal{X}_b).$

Suppose $\mathcal{Z} \in \ch^k(\mathcal{X})$ is a cycle that is fibrewise cohomologically trivial, i.e.\ the restriction $\mcz_b \coloneqq \mcz|_{\mathcal{X}_b} \in \ch^k(\mathcal{X}_b)$  has trivial cohomology class.  Then the images of the  cycles $\mathcal{Z}_b$ under the Abel--Jacobi maps $\Phi_b$  form a holomorphic section $\nu \coloneqq \nu_{\mathcal{Z}}$ of $p_J \colon J^{2k-1} \ra B$,  called the \textit{normal function} associated with $\mathcal{Z}$. By construction, we have $\nu_b = \Phi_b(\mathcal{Z}_b)$ for every $b \in B$. 

The zero locus of $\mathcal{Z} \in \ch^k(\mathcal{X})$ is contained in that of its associated normal function $\nu$:
\begin{equation} \label{inclusion_vanishing_loci}
	\{b \in B \mid \mathcal{Z}_b = 0  \ \textup{in}\ \ch^{k}(\mathcal{X}_b)\} \subset Z(\nu) \coloneqq \{b \in B \mid \nu_b = 0  \ \textup{in}\ J^{2k-1}(\mathcal{X}_b)\}.
\end{equation}
By   \cite[Cor.~1.3]{Brosnan_Pearlstein_2013} and \cite[Cor.~4.7]{Schnell2012}\footnote{The cited works establish algebraicity for admissible normal functions (see \cite{Saito96admissible} for the notion of admissibility). Since normal functions arising from families of algebraic cycles are automatically admissible  \cite[Rmk~1.7(ii)]{Saito96admissible}, the theorem applies in our setting.}, the vanishing locus $Z(\nu) \subset B$ is  Zariski closed. In particular, if $\dim(B) = 1$ and $\nu \neq 0$, then $Z(\nu)$ consists of at most finitely many point.

A general strategy to show the nontriviality of the normal function $\nu$  associated with the class $\mathcal{Z} \in \ch^k(\mathcal{X})$ is as follows. 
Let $H_{\mbz}^{2k-1} \coloneqq R^{2k-1}p_{\ast}\mbz/\textup{torsion}$ be the local system on $B$. The locally free $\mso_B$-module $\mathcal{H}^{2k-1} \coloneqq H_{\mbz}^{2k-1} \otimes \mso_B$  admits a Hodge filtration  $\mathcal{H}^{2k-1} = F^0\mathcal{H}^{2k-1} \supset \cdots \supset  F^{2k-1}\mathcal{H}^{2k-1}$. Denote by $\mathcal{J}^{2k-1}$  the sheaf of holomorphic sections of the family of intermediate Jacobians $p_J \colon J^{2k-1} \ra B$. Then $\nu \in H^0(B, \mathcal{J}^{2k-1})$ by construction. We have the following natural short exact sequence
\begin{equation} \label{ses_global_IJ}
	0 \ra H_{\mbz}^{2k-1} \ra \mathcal{H}^{2k-1}/F^k\mathcal{H}^{2k-1} \ra \mathcal{J}^{2k-1} \ra 0,
\end{equation}
and its induced long exact sequence has the boundary map $$\delta \colon H^0(B, \mathcal{J}^{2k-1}) \ra H^1(B, R^{2k-1}p_{\ast}\mbz), \ \ \ \ \nu \mapsto \delta\nu.$$  

The image $\delta\nu$ can be  alternatively described using  the Leray spectral sequence $$E_2^{p,q} = H^{p}(B, R^q\pi_{\ast}\mbz) \Rightarrow E^{p+q} = H^{p+q}(\mathcal{X}, \mbz).$$ Indeed, 
consider the following   boundary map of the Leray spectral sequence: \begin{equation} \label{eq_boundary_Leray}
	d \colon \ker(E^{2k} \ra E_{2}^{0,2k}) \ra E_2^{1,2k-1} = H^1(B, R^{2k-1}\pi_{\ast}\mbz).
\end{equation}  
Since $\mathcal{Z} \in \ch^k(\mathcal{X})$ is fibrewise cohomologically trivial by construction,  its integral cohomology class $[\mathcal{Z}]$ lies in $\ker(E^{2k} \ra E_{2}^{0,2k})$. By \cite[Lem.~8.20]{Voisin_HodgeII}, we have $$\delta\nu =  d[\mathcal{Z}] \in H^1(B, R^{2k-1}\pi_{\ast}\mbz).$$ 
Therefore,  to show that the normal function $\nu$ associated with an algebraic cycle $\mathcal{Z} \in \ch^k(\mathcal{X})$ is nontrivial, it suffices to show that $d\mathcal{Z} \neq 0$. In particular, if $[\mathcal{Z}] \in H^{2k}(\mathcal{X}, \mbz)$ is nontrivial and $d$ is injective, then  $\nu \neq 0$.

The following observation is a straightforward extension of \cite[Lem.~5.8]{Huybrechts2014} to a more general setting.
\begin{lem} \label{spec_bound_inj}
	Let $p \colon \mathcal{X} \ra B$ be a smooth projective morphism with $B$ smooth and quasi-projective. Assume that $\dim(B) = 1$. Then, for every $k > 0$, the boundary map
		$$d \colon \ker(E^{2k} \ra E_{2}^{0,2k}) \ra E_2^{1,2k-1}$$
	is injective (after possibly shrinking $B$ if $B$ is projective).
\end{lem}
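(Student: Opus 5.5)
We prove the injectivity of $d$ by embedding the relevant piece of the Leray spectral sequence into the exact sequence coming from the degeneration of the spectral sequence for a family over a curve. The approach follows the argument of \cite[Lem.~5.8]{Huybrechts2014}. First we reduce to the case where $B$ is affine. If $B$ is projective, we remove a point; this is the ``shrinking'' allowed in the statement. Once $B$ is a smooth affine curve, it is a Stein space of complex dimension one and has the homotopy type of a one-dimensional CW complex. Hence $H^p(B, \mathcal{L}) = 0$ for every local system $\mathcal{L}$ and every $p \geqslant 2$. In particular $E_2^{p,q} = 0$ for $p \geqslant 2$.

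With only the two columns $p = 0, 1$ nonzero, all differentials $d_r$ for $r \geqslant 2$ vanish for degree reasons: $d_r$ maps $E_r^{p,q}$ to $E_r^{p+r,q-r+1}$, and the target column has index at least $2$. So the spectral sequence degenerates at $E_2$. The filtration on $E^{2k} = H^{2k}(\mathcal{X},\mbz)$ then has length two, giving the short exact sequence
\begin{equation*}
0 \ra E_2^{1,2k-1} \ra H^{2k}(\mathcal{X},\mbz) \ra E_2^{0,2k} \ra 0 .
\end{equation*}
Here the right-hand map is the edge map, i.e.\ restriction to fibres, landing in $H^0(B, R^{2k}p_\ast\mbz)$.

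Next we identify $\ker(E^{2k} \ra E_2^{0,2k})$ with $F^1E^{2k}$. By the degeneration this equals $E_\infty^{1,2k-1} = E_2^{1,2k-1}$. The boundary map $d$ in \eqref{eq_boundary_Leray} is by definition the projection $F^1E^{2k} \ra E_\infty^{1,2k-1} \subset E_2^{1,2k-1}$. It is injective precisely when $F^2E^{2k} = E_\infty^{2,2k-2} = 0$, and that group vanishes because $E_2^{2,2k-2} = H^2(B, R^{2k-2}p_\ast\mbz) = 0$. Thus $d$ is an isomorphism onto $E_2^{1,2k-1}$, and in particular injective.

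The main point requiring care is the vanishing of $H^2(B, \mathcal{L})$ for local systems on a non-compact curve, with integral coefficients. This holds because a smooth affine curve deformation retracts onto a graph. For this we should use that Leray with $\mbz$-coefficients commutes with this topological statement, since $R^qp_\ast\mbz$ is locally constant by Ehresmann's theorem, $p$ being smooth and proper. We should also check that the torsion-free quotient used in \eqref{ses_global_IJ} does not affect the argument: the map $d$ is defined with $R^{2k-1}p_\ast\mbz$ itself, so no issue arises. Finally, if $B$ is projective, removing a point does not change the conclusion needed later, because the normal function of interest is determined on any dense open subset of $B$.
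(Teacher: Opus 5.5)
Your proposal is correct and follows essentially the same route as the paper: both identify $\ker d$ with $F^2H^{2k}(\mathcal{X},\mathbb{Z}) = E_\infty^{2,2k-2}$ (using that columns $p>2$ vanish on a curve) and kill it via $H^2(B, R^{2k-2}p_\ast\mathbb{Z}) = 0$ after removing a point when $B$ is projective. Your two-column degeneration argument additionally shows that $d$ is an isomorphism onto $E_2^{1,2k-1}$, and you have the correct bidegree $E_\infty^{2,2k-2}$ where the paper's text has the typo $E_\infty^{2,2k-1}$.
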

\begin{proof}
We follow the argument of  \cite[Lem.~5.8]{Huybrechts2014}.  Consider the following filtration given by the Leray spectral sequence: $$H^{2k}(\mathcal{X}, \mbz) = E^{2k} = E_0^{2k} \supset E_1^{2k}  \supset \cdots \supset E_{2k}^{2k} = 0, \ \ E_i^{2k}/E_{i+1}^{2k} = E_{\infty}^{i, 2k-i}.$$ The kernel of the boundary map $d$ is  $E_2^{2k}$. If $\dim(B) = 1$, then $E_2^{p,q} = E_{\infty}^{p,q} = 0$ for all $p > 2$; thus the $i$-th filtration $E_i^{2k} = 0$ for $i >2$. Consequently, $$\ker(d) = E_2^{2k} = E_{\infty}^{2,2k-1} = E_2^{2,2k-2}/\textup{im}(d_2^{0,2k-1} \colon E_2^{0,2k-1} \ra E_2^{2,2k-2}).$$
	Since $\dim(B) = 1$ and  $R^{2k-2}\pi_{\ast}\mbz$ is a local system on $B$, by shrinking $B$ (only necessary  if $B$ is projective) we obtain $E_2^{2,2k-2} = H^{2}(B, R^{2k-2}\pi_{\ast}\mbz) = 0$. Thus $d$ is injective.
\end{proof}

Now we are ready to show \Cref{fini_tor}.
\begin{proof}[Proof of \Cref{fini_tor}]
	It suffices to prove that the zero locus $\{ b \in B \mid \mcz_b = 0 \ \textup{in} \ \ch^k(\mathcal{X}_b)\}$ is finite; the finiteness of the $N$-torsion locus for every nonzero integer $N$ then follows by replacing $\mcz$ with $N \mcz$.  Let $\nu$ be the normal function associated  with  $\mathcal{Z}$. By \eqref{inclusion_vanishing_loci} and the algebraicity of the zero locus $Z(\nu)$ \cite{Brosnan_Pearlstein_2013, Schnell2012}, it is enough to show that $\dim Z(\nu) = 0$. 
	
	We argue by contradiction. Suppose  $\dim Z(\nu) \geqslant 1$. Let $T_0 \subset Z(\nu)$ be a one-dimensional subvariety. Replace the family $p \colon \mathcal{X} \ra B$, the class $\mathcal{Z}$ and the associated normal function $\nu$ by their restrictions to $T_0$. By assumption, the cohomology class  $[\mathcal{Z}] \neq 0$, so $d [\mathcal{Z}] = \delta \nu \neq 0$ by \Cref{spec_bound_inj}, and thus $\nu \neq 0$, a contradiction to $T_0 \subset Z(\nu)$.  Therefore $\dim Z(\nu) = 0$, completing the proof.
\end{proof}

\subsection{Proof of finiteness of constant cycle Fano surfaces} \label{proof_finiteness}
 Let  $\mcy \ra \mbp \coloneqq |\mso_X(1)|$ be the universal family of  hyperplane sections of the smooth cubic fourfold $X$, with fibre $\mcy_b$ over $b \in \mbp$. 
 Recall from \Cref{cri_AJ_Cl} that, if  $F(\mcy_b)$ is a constant cycle curve of order $N$, then for  a  generic line $l_b \subset \mcy_b$ and the curve $\mcc_b \coloneqq C_{l_b}$ of lines meeting $l_b$, the associated class $\ix^2 Z_{l_b} \in \ch^3(X \times \tmcc_b)$ (see \eqref{def_Zl}) is $N$-torsion.
 
 The strategy to prove \Cref{main_thm_intro} is as follows. We first stratify $\mbp$ so that, over each stratum $B$, one can choose a family of lines $\{l_b \subset \mcy_b \mid b \in B \}$ for which the associated family of curves $\{\mcc_b \mid b \in B\}$ admits a simultaneous resolution $\{\tmcc_b \mid b \in B\}$. We then verify that the family of cycles $\{ \ix Z_{l_b} \in \ch^3(X \times \tmcc_{b})_{\lhom} \mid b \in B\}$ satisfies the hypotheses of \Cref{fini_tor}. Applying  \Cref{fini_tor} to each stratum then yields the desired finiteness.

\subsubsection{Stratification and simultaneous resolution}
Let $U \subset \mbp$ be the locus of hyperplane sections of $X$ with no triple points. Recall from \Cref{exclude_cone} that $\mbp \backslash U$ is  a finite set. As a preliminary step, we stratify $U$ so that on each stratum  \Cref{fini_tor} can be applied. 

Let $F(\mcy/\mbp) \ra \mbp$ denote the relative Fano variety of lines of the universal family $\mcy \ra \mbp$. Its fibre over a point $b \in \mbp$ is the Fano surface  $F(\mcy_b)$ of the cubic threefold $\mcy_b$. For a variety $B$ with a morphism $B \ra \mbp$,  let $\mcy_{B} \coloneqq \mcy \times_{\mbp} B$ denote the base change and $F(\mcy_B/B) \ra B$ its relative Fano variety of lines. 

For convenience we introduce the following notion: 
\begin{defn} \label{def_general_section}
	We call a section $s \colon B \ra F(\mcy_B/B)$ of the family $F(\mcy_B/B) \ra B$ of Fano surfaces \textit{general}, if for every $b \in B$, the line $l_b \subset \mcy_b$ corresponding to the point $s(b) \in F(\mcy_b)$ is generic in the sense of \Cref{existence_1dim}.
\end{defn}

A general section $s$ of $F(\mcy_B/B) \ra B$ then  determines a family $\mcc_B \ra B$ of curves, whose fibre $\mcc_b$ over $b \in B$ is  precisely the curve $C_{l_b} \subset F(\mcy_b)$ of lines intersecting $l_b$ (see \eqref{def_Cl}).

We need the following general result on simultaneous resolution:
\begin{thm}[{\cite[p.~80]{Teissiersimulresol}}] \label{thm_simul_resol} 
	Let  $f \colon \mcc  \ra B$ be a flat family of connected, reduced projective curves over a normal variety $B$. Then $f$ admits a simultaneous resolution given by the normalization $\nu \colon \tmcc \ra \mcc$. Specifically, for each $b \in B$, the restriction  $\nu_b \colon \tmcc_b \ra \mcc_b$ is the normalization, and the composition 
	$f \circ \nu \colon  \tmcc \ra B$
	is a flat family of smooth projective curves.
\end{thm}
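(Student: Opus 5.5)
The plan is to follow Teissier's approach: normalize the total space, then compare the fibres of the normalization with the normalizations of the fibres using the $\delta$-invariant. Let $\nu \colon \tmcc \ra \mcc$ be the normalization of the total space. Since $\mcc$ is reduced and excellent, $\nu$ is finite and birational. Because $B$ is normal and $f$ is flat with reduced (hence generically smooth) fibres, $\mcc$ is normal away from the fibrewise singular locus, so $\nu$ is an isomorphism there. The statement then amounts to two claims: (a) each fibre $\tmcc_b$ is a smooth curve mapping birationally onto $\mcc_b$, so that $\tmcc_b \ra \mcc_b$ is the normalization; and (b) $f \circ \nu$ is flat.

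\textbf{Step 1 (local reduction).} Work étale-locally near a singular point $x$ of a fibre $\mcc_b$. Write $\mathcal{O} = \mathcal{O}_{\mcc,x}$ and $\overline{\mathcal{O}}$ for its normalization. Flatness of $f$ together with reducedness of the fibres makes $\mcc \ra B$ a family of reduced curve singularities. Consider the fibrewise $\delta$-invariant
\[
\delta(b) = \sum_{x \in \mcc_b} \dim_{\mbc} \overline{\mathcal{O}}_{\mcc_b,x}/\mathcal{O}_{\mcc_b,x},
\]
which is upper semicontinuous in $b$.

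\textbf{Step 2 (Teissier's criterion).} I would apply Teissier's theorem: over a normal base, $\overline{\mathcal{O}}$ is flat over $B$ with normal fibres exactly when $b \mapsto \delta(b)$ is locally constant. When it is, $\overline{\mathcal{O}}/\mathcal{O}$ is a coherent $\mathcal{O}_B$-module with constant fibre length. Normality of $B$ then gives that this module is locally free. From this one gets flatness of $\tmcc$, and by Serre's criterion the fibres of $\tmcc$ are normal, i.e.\ smooth curves. Finiteness and birationality of $\nu_b$ then identify $\tmcc_b$ with the normalization of $\mcc_b$, proving (a) and (b). Projectivity of $\tmcc_b$ is inherited from $\nu$ being finite.

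\textbf{Step 3 (main obstacle: constancy of $\delta$).} The main obstacle is showing that $\delta$ is constant along $B$. Connectedness and flatness give constant arithmetic genus $p_a(\mcc_b)$. Since $\delta(b) = p_a(\mcc_b) - g(\widetilde{\mcc}_b)$, constancy of $\delta$ is equivalent to constancy of the total geometric genus $g(\widetilde{\mcc}_b)$. I would first try to deduce this from normality of $B$ together with the conductor computation of Step 2. If that does not go through in general, the fallback is to use the statement in the form it is needed later in the paper. There one may shrink and stratify the base, and since $\delta$ is upper semicontinuous and constructible, $B$ decomposes into finitely many locally closed normal strata on each of which $\delta$ is constant. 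Steps 1--2 then apply stratum by stratum, which is precisely how the theorem is used in the stratification of $U$.
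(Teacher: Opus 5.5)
You correctly locate the crux in constancy of $\delta$. However, the first route in your Step 3, deducing it from normality of $B$, cannot work: without that hypothesis the statement is false. The paper offers no argument beyond the citation, and what Teissier proves is a \emph{criterion}. For a flat family of reduced curves over a normal base, the normalization of the total space is a simultaneous resolution if and only if $b \mapsto \delta(b)$ is locally constant.

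For a counterexample to the unconditional version, take $\mcc = \{y^2z = x^3 + x^2z + tz^3\} \subset \mbp^2 \times \mathbb{A}^1_t$. All fibres are irreducible plane cubics, and $f$ is flat over the normal base $\mathbb{A}^1$. But $\mcc$ is smooth: on $z \neq 0$ the $t$-derivative of the equation is $-1$, and each fibre is smooth at its only point $[0:1:0]$ at infinity. So $\nu = \mathrm{id}$ and $\tmcc_0$ is the nodal cubic; indeed $\delta(0) = 1$ while $\delta(t) = 0$ for general $t$.

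Your fallback is the right repair: refine to normal (e.g.\ smooth) strata on which $\delta$ is constant, and apply the criterion there. It matches how the result is actually used in \Cref{stratification}. But it proves the corrected statement, so the $\delta$-constancy hypothesis should be stated, not hoped for. A smaller point: with reducible fibres your identity should read $\delta(b) = \chi(\mathcal{O}_{\mcc_b^{\nu}}) - \chi(\mathcal{O}_{\mcc_b})$, where $\mcc_b^{\nu}$ is the normalization of $\mcc_b$. This differs from $p_a - g$ by the number of components of $\mcc_b^{\nu}$ minus one.

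Separately, Step 2 is circular as a proof of the criterion. Since $\nu$ is finite, the fibre of $\nu_*\mathcal{O}_{\tmcc}/\mathcal{O}_{\mcc}$ at $b$ is $\nu_*\mathcal{O}_{\tmcc_b}/\mathrm{im}(\mathcal{O}_{\mcc_b})$. Its length equals $\delta(b)$ only if you already know that $\tmcc_b \to \mcc_b$ is the normalization, which is the claim. Also, local freeness from constant fibre rank uses only reducedness of $B$. Normality of $B$ is needed for a different reason: it guarantees that a flat family with smooth fibres, finite and birational over $\mcc$, is normal and hence equal to $\tmcc$.

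Over a smooth curve base the correct mechanism is as follows.
\begin{enumerate}
\item $\tmcc$ is a normal surface, hence Cohen--Macaulay and flat over $B$.
\item Each $\tmcc_b$ is then Cohen--Macaulay and generically reduced, since $\nu$ is an isomorphism over the fibrewise smooth locus of $\mcc$. Hence $\tmcc_b$ is reduced, so $\mathcal{O}_{\mcc_b} \subset \nu_*\mathcal{O}_{\tmcc_b} \subset \mathcal{O}_{\mcc_b^{\nu}}$.
\item Flatness of both families and generic smoothness of $\tmcc \to B$ give $\delta(b) = \delta_{\mathrm{gen}} + \delta(\tmcc_b)$, where $\delta_{\mathrm{gen}}$ is the value of $\delta$ at a general point of $B$. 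So $\tmcc_b$ is smooth exactly when $\delta(b) = \delta_{\mathrm{gen}}$.
\end{enumerate}
Over a higher-dimensional normal base, $\tmcc$ need not be Cohen--Macaulay, and flatness is not automatic. This is the part that genuinely requires Teissier's argument, so Step 2 should either cite the criterion as such or supply this input.
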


We construct a stratification of $U$ as follows.
\begin{lem} \label{stratification}
	The locus $U \subset \mbp$ admits a stratification $U = \coprod_{i \in I} U_i$ into finitely many (locally closed) subvarieties with the following properties for each $i \in I$:
	\begin{enumerate}[label=(\roman*), font=\normalfont]
		\item There exists a smooth variety $V_i$ with a finite  morphism $V_i \ra U_i$ such that the relative Fano variety  $F(\mcy_{V_i}/V_i) \ra V_i$ admits a general section $s_i \colon V_i \ra F(\mcy_{V_i}/V_i)$;
		\item The family of curves $\mcc_{V_i} \ra V_i$ associated with $s_i$ admits a simultaneous resolution via the normalization $\nu \colon \tmcc_{V_i} \ra \mcc_{V_i}$.
	\end{enumerate} 
\end{lem}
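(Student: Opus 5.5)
The plan is to build the stratification by Noetherian induction on $U$, stratifying first so that a general section exists over a finite cover, and then refining so that the associated family of curves is flat with reduced connected fibres and \Cref{thm_simul_resol} applies. Concretely, it suffices to show: for every irreducible locally closed $W \subset U$ there is a nonempty open $W^\circ \subset W$ together with a smooth variety $V$ and a finite morphism $V \to W^\circ$ satisfying (i) and (ii). Applying this to the irreducible components of $U$, then to those of the complement $U \setminus \coprod W^\circ$, and so on, terminates after finitely many steps and gives the finite stratification.

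Fix an irreducible $W$ with generic point $\eta$. By \Cref{existence_1dim} the Fano scheme $F(\mcy_\eta)$ has an irreducible component $S_\eta$ on which the locus of generic lines is open and dense. This locus is a nonempty open subset of $F(\mcy_W/W)$ restricted to the component dominating $W$; it is open because $\dim C_l$ is upper semicontinuous, being the fibre dimension of the incidence family $\{(l,l') \mid l \cap l' \neq \varnothing\} \to F(\mcy_W/W)$. Choose a closed point over $\eta$ in this open set, or more precisely take a multisection: cut the dominating component by general hyperplanes until one obtains a subvariety $V' \subset F(\mcy_W/W)$ that is generically finite over $W$ and meets the generic-line locus. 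Shrinking $W$, the map $V' \to W$ becomes finite. Replacing $V'$ by its smooth locus, and shrinking $W$ again so that $V' \to W$ stays finite and surjective, gives a smooth $V$ with the tautological section $s \colon V \to F(\mcy_V/V)$. This section is general over a dense open, and we shrink to that open.

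For (ii), form $\mcc_V \subset F(\mcy_V/V)$, the reduced closure of the incidence locus, with fibres $C_{l_b}$ over the generic point. By generic flatness, and since reducedness of fibres is an open condition on the base in a flat proper family (EGA IV~12.2.4), after shrinking $V$ the family $\mcc_V \to V$ is flat. Its fibres are exactly $C_{l_b}$ with reduced structure, pure one-dimensional, and reduced. For connectedness, use Stein factorization $\mcc_V \to V' \to V$; after a further finite étale base change making $V' \to V$ trivial over a dense open, restrict to one component, or treat each connected component separately. \Cref{thm_simul_resol} then applies to the normal, indeed smooth, base $V$ and gives the simultaneous resolution. Finally set $U_i := $ the image of $V$, which is open in $W$.

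The main obstacle is the fibrewise identification in (ii): that over the open set the fibre of the reduced family $\mcc_V$ over $b$ coincides with the reduced curve $C_{l_b}$, including closure, and that it stays reduced and equidimensional. The first thing I would try is to use the fact that formation of the incidence scheme commutes with base change, combined with generic flatness and the openness of geometric reducedness. Only a dense open of each stratum is needed, so any pathology gets pushed into the lower-dimensional strata handled by induction. If connectedness fails, the natural fix is to apply \Cref{thm_simul_resol} to each connected component after passing to the Stein factorization.
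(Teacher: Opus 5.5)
Your proposal is correct and follows the same route as the paper's very brief sketch: first stratify so that, after a finite base change, a general section exists (via \Cref{existence_1dim}), then refine so that the associated family of curves satisfies the hypotheses of \Cref{thm_simul_resol}; you simply make explicit the Noetherian induction, the multisection construction, and the generic flatness, reducedness and Stein factorization steps that the paper leaves implicit. Two small points that your dense-open shrinking already absorbs: after the Stein factorization, keep all connected components and take the disjoint union of their normalizations rather than restricting to one, since the lemma needs the whole $C_{l_b}$; and Teissier's criterion, cited as \Cref{thm_simul_resol}, genuinely requires the $\delta$-invariant of the fibres to be constant, which you can impose by one more shrinking (or bypass via generic smoothness of the normalized total space $\tmcc_{V}$ in characteristic zero).
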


\begin{proof}
This is standard;  we only sketch the argument. One first constructs a finite stratification of $U$ satisfying (i); the existence of a general section $s_i$ over each $V_i$ is ensured by \Cref{existence_1dim}. One then refines the stratification so that on each stratum the conditions for the existence of simultaneous resolution in \Cref{thm_simul_resol} are satisfied.
\end{proof}

\subsubsection{Construction of the global cycle} 
Let $i \in I$ and $V_i$ be as in  \Cref{stratification}, and write $B \coloneqq V_i$. Then the relative Fano variety $F(\mcy_{B}/B) \ra B$ admits a general section $s_i$.  For $b \in B$, let $l_b$ denote the line corresponding to the point $s_i(b) \in F(\mcy_b)$. The section $s_i$ determines a family of curves $h \colon \mcc_B \ra B$ whose fibre $\mcc_b \coloneqq h^{-1}(b)$ over $b \in B$ is the curve of lines in $\mcy_b$ meeting  $l_b$ (see \eqref{def_Cl}). By construction, the family $h$ admits a simultaneous resolution $\tmcc_{B} \ra \mcc_B$; both $B$ and $\tmcc_B$ are smooth.  
We have the natural diagram
\begin{equation} \label{diag_pullback}
	\begin{tikzcd}[column sep=3.5em]
		& X \times \tmcc_b  \arrow[r, hook, "\textup{id}_X \times i_b"]
		&X \times \tmcc_{B} \arrow[r,"\textup{id}_X \times p_B"] \arrow[d] &X \times F(X)\\
		& b \arrow[r, phantom, "\in"] &B.
	\end{tikzcd}
\end{equation}

Let $\{[S_i] \in \ch^2(X) \mid 1 \leqslant i \leqslant \rho \}$  be the orthogonal $\mbq$-basis of $\ch^2(X)_{\mbq}$ chosen in \eqref{def_basis}, and let $\lx^t \subset X \times F(X)$ denote the dual Fano correspondence. Set $[T_i] \coloneqq [\lx^t]_{\ast}[S_i] \in \ch^1(F(X))$. Recall that $[S_1] = \hx^2$, so $[T_1] = \gx$ is the Pl\"ucker polarization. 
  
Consider the algebraic cycle 
\begin{equation} \label{def_ZX} 
	Z_X \coloneqq [\lx^t] - \frac{1}{3}\hx^3 \times [F(X)] -  \frac{1}{3}\hx^2 \times \gx - \sum_{i=2}^{\rho} \frac{1}{[S_i]^2}[S_i] \times [T_i] \in \ch^3(X \times F(X))_{\mbq}
\end{equation} 
and its pullbacks via the morphisms in \eqref{diag_pullback}:
\begin{equation}\label{def_ZB}
	Z_B \coloneqq (\textup{id}_X \times p_B)^{\ast}Z_X \in \ch^3(X \times \tmcc_{B})_{\mbq}, \quad Z_b \coloneqq (\textup{id}_X \times i_b)^{\ast}Z_B \in  \ch^3(X \times \tmcc_b)_{\mbq}, \quad b \in B.
\end{equation}
Let $f_b \colon \tmcc_b \ra \mcc_b \hookrightarrow F(X)$ denote the composition.  Write $\widetilde{P}_b \subset \tmcc_b \times X$ for the incidence correspondence and  $\widetilde{P}_b^{t} \subset X \times \tmcc_b$ for its transpose. Set  $\theta_{b,i} \coloneqq [\widetilde{P}_b^t]_{\ast}[S_i] \in \ch^1(\tmcc_b)$.  Then
\begin{equation*}
	\delta_b \coloneqq \sum_{i=2}^{\rho} \frac{1}{[S_i]^2}[S_i] \times \theta_{b,i}, \quad Z_b = [\widetilde{P}_b^{t}] - \frac{1}{3}\hx^3 \times [\tmcc_b]  - \frac{1}{3}\hx^2 \times f_b^{\ast}(\gx) - \delta_b  \in \ch^3(X \times \tmcc_b)_{\mbq}
\end{equation*}
 are exactly the classes $\delta_{l_b}, Z_{l_b}$ in the notation of \eqref{def_Zl}. As before, the integer $\ix \coloneqq \prod_{i=1}^{\rho}[S_i]^2$  satisfies $\ix  Z_X \in \ch^3(X \times F(X))$, $\ix Z_B \in \ch^3(X \times \tmcc_B)$, and $\ix Z_b \in \ch^3(X \times \tmcc_b)$.  

The following lemma shows that the  cohomology class $\ix[Z_{B}] \in H^6(X \times \tmcc_{B}, \mbz)$ is fibrewise  trivial over $B$ yet globally nontrivial.

\begin{lem} \label{global_cycle_property}
	Let $Z_{B} \in \ch^3(X \times \tmcc_{B})_{\mbq}$ be as defined in \eqref{def_ZB}. Then: 
	\begin{enumerate}[label=(\roman*), font=\normalfont]
		\item For any $b \in B$, the cycle $\ix Z_b \in \ch^3(X \times \tmcc_b)$ is cohomologically trivial. 
		\item For any one-dimensional  subvariety $T_0 \subset B$ and $\tmcc_{T_0} \coloneqq \tmcc_B |_{T_0}$, the restriction  $$\ix  Z_{T_0} \coloneqq \ix  Z_{B}|_{X \times \tmcc_{T_0}} \in \ch^3(X \times \tmcc_{T_0})$$ is cohomologically nontrivial.
	\end{enumerate}   
\end{lem}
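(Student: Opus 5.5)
Part (i) is immediate from what precedes. By construction $Z_b = (\textup{id}_X \times i_b)^{\ast} Z_B$ is exactly the class $Z_{l_b}$ of \eqref{def_Zl} for the generic line $l_b \subset \mcy_b$ and the normalization $\tmcc_b \to \mcc_b = C_{l_b}$. Here we use that the simultaneous resolution of \Cref{stratification} restricts on fibres to the normalization. So \Cref{decomp_Pl} gives that $\ix Z_b$ is cohomologically trivial. The only point to check is compatibility of the Gysin pullback with the fibrewise definition. Since $\tmcc_B \to B$ is flat with smooth fibres, pulling back $[\lx^t]$, $\hx^3 \times [F(X)]$, $\hx^2 \times \gx$ and $[S_i]\times[T_i]$ along $\textup{id}_X \times f_b$ gives $[\widetilde P_b^t]$, $\hx^3\times[\tmcc_b]$, $\hx^2 \times f_b^\ast\gx$ and $[S_i]\times\theta_{b,i}$ respectively. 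The last identity uses $f_b^\ast [\lx^t]_\ast = [\widetilde P_b^t]_\ast$, which is the definition of $\widetilde P_b$.

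For (ii) the plan is to detect $[\ix Z_{T_0}]$ by a single cohomological pairing. Write $\pi\colon \tmcc_{T_0}\to T_0$, and after shrinking assume $T_0$ is a smooth affine curve, so that $\tmcc_{T_0}$ is a smooth surface. The class $\ix[Z_{T_0}] \in H^6(X\times \tmcc_{T_0})$ has a Künneth component in $H^4(X)\otimes H^2(\tmcc_{T_0})$, and it suffices to show that
\[
 \ix [Z_{T_0}]_\ast(\hx^2) \neq 0 \ \ \text{in } H^2(\tmcc_{T_0},\mbq).
\]
The $\delta$-terms drop out here, because $[S_i]\cdot \hx^2 = 0$ for $i\ge 2$ by orthogonality. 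The term $\frac13\hx^3\times[F(X)]$ also contributes nothing to the degree-two piece, since it pushes $\hx^2$ to $\deg(\hx^5)/3\cdot[\tmcc_{T_0}]\in H^0$, which is irrelevant in $H^2$. What remains is the class
\[
 p^\ast\big([\lx^t]_\ast \hx^2\big) - \tfrac{1}{3}(\hx^2\cdot\hx^2)\, p^\ast\gx = p^\ast \gx - p^\ast \gx = 0,
\]
using $[\lx^t]_\ast\hx^2=\gx$ and $\deg \hx^4 = 3$. So this test is useless, and the same cancellation is built into $Z_X$ for every $[S_i]$. Pairing against $H^4(X)$ classes of Hodge type therefore gives nothing.

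The nontriviality must instead come from the transcendental part. The leftover component is $[\lx^t]_{\rm tr} \in H^4_{\rm tr}(X)\otimes H^2(F(X))_{\rm tr}$, essentially the Beauville--Donagi isomorphism $H^4(X)_{\rm prim}\cong H^2(F(X))_{\rm prim}$ restricted to the transcendental lattice, where it is an isometry up to sign and contains $H^{3,1}$. Indeed $Z_X$ equals $[\lx^t]$ minus its algebraic Künneth projection, so $[Z_X]$ is the transcendental piece of the Fano correspondence. The key step is therefore to show that $p_{T_0}^\ast\colon H^2(F(X))_{\rm tr}\to H^2(\tmcc_{T_0})$ is nonzero. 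Here $p_{T_0}\colon \tmcc_{T_0}\to F(X)$ maps the surface swept out by the curves $C_{l_b}$, $b\in T_0$. I would argue that this image is a surface $\Sigma\subset F(X)$ that is not Lagrangian. If it were Lagrangian, then since each $C_{l_b}$ lies in the Lagrangian surface $F(\mcy_b)$ and $b$ varies, the holomorphic $2$-form $\sigma$ would vanish on $\Sigma$. I would then show $p_{T_0}^\ast\sigma\neq 0$ directly. At a point $[l']$ of $C_{l_b}$, the tangent directions are along $C_{l_b}$ (inside $T F(\mcy_b)$) and a normal direction given by moving the hyperplane. By Voisin's description of $\sigma$, the pairing $TF(Y)\times N_{F(Y)/F(X)}\to\mbc$ is perfect, identifying $N\cong \Omega_{F(Y)}$. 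So $\sigma(v,w)\neq 0$ as long as the deformation direction $w$, viewed as a 1-form on $F(\mcy_b)$, does not vanish along $T C_{l_b}$. Since $w$ comes from a nonzero section of $H^0(\Omega_{F(Y)})\cong H^0(\mso_Y(1))/\cdots$ and $C_{l_b}$ moves in a family covering $F(Y)$, this fails only on a proper locus, giving $p^\ast\sigma\neq0$.

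Finally, $\sigma$ lies in $H^{2,0}(F(X))\subset H^2_{\rm tr}$, and it corresponds under $[\lx^t]$ to a class $\omega\in H^{3,1}(X)$. Pairing $\ix[Z_{T_0}]$ with $\bar\omega$, that is, computing $\ix[Z_{T_0}]_\ast(\bar\omega)\in H^2(\tmcc_{T_0},\mbc)$, gives a nonzero multiple of $p_{T_0}^\ast\bar\sigma\neq0$. The algebraic terms of $Z_X$ kill $\bar\omega$, so this proves (ii). The main obstacle is the nonvanishing $p_{T_0}^\ast\sigma\neq0$ on the open surface $\tmcc_{T_0}$, and in particular handling degenerate curves $T_0$, e.g. when $b$ moves but the images of the $C_{l_b}$ sweep out only a constant-cycle or Lagrangian surface. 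I would first try the tangent computation above via Voisin's normal-bundle identification, possibly after replacing $T_0$ by its generic point.
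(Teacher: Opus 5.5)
Part (i) is fine and is the paper's argument: once $Z_b=Z_{l_b}$, it is \Cref{decomp_Pl}.

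For (ii) you take a different route from the paper, and your reduction is the right one. The paper passes to a smooth compactification $\tmcc_T$ and shows $[\lx]_\ast(p_T)_\ast[\tmcc_T]\neq 0$ in $H^2(X)$. It argues that the image of $p_T$ is a surface, since $F(X)$ has no constant cycle divisors, and that its lines sweep out a divisor of $X$. That only detects the K\"unneth component of $\ix[Z_T]$ in $H^2(X)\otimes H^4(\tmcc_T)$. This component restricts to zero on $X\times\tmcc_{T_0}$ as soon as $T_0$ is not proper, because then $H^4(\tmcc_{T_0})=0$. Equivalently, it lies in the second Leray filtration step, which is $\ker d$ in \Cref{spec_bound_inj}. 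Combined with $H^4(\tmcc_{T_0})=0$, your computation shows that the only component that can survive on $X\times\tmcc_{T_0}$ is the transcendental part of $H^4(X)\otimes H^2(\tmcc_{T_0})$. Indeed the $H^6(X)\otimes H^0$ part vanishes and $[Z_X]_\ast$ kills $H^{2,2}(X,\mbq)$. So for nonproper $T_0$, statement (ii) is equivalent to $p_T^\ast\sigma\neq 0$. The step you isolate is therefore exactly what \Cref{fini_tor} needs, and the paper's argument does not supply it.

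The gap is that this nonvanishing is not proved.

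First, your justification in the smooth case does not work: ``$C_{l_b}$ moves in a family covering $F(Y)$, so this fails only on a proper locus.'' The line $l_b$ is prescribed by the section $s_i$. What you need is that the particular nonzero $1$-form $\omega_{w_b}$ does not vanish on $C_{l_b}$, where $\omega_{w_b}$ is the image of $w_b$ under $N_{F(\mcy_b)/F(X)}\cong\Omega_{F(\mcy_b)}$.
\begin{itemize}
\item The correct input is injectivity of $H^0(\Omega_{F(\mcy_b)})\to H^0(\Omega_{\tmcc_b})$. This is dual to the surjectivity $\textup{Alb}(\tmcc_b)\to\textup{Alb}(F(\mcy_b))$ used in \Cref{criterion_ccc_iff_ccs}. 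For smooth $\mcy_b$ it holds for every generic $l_b$, because $C_{l_b}$ is an ample connected curve with $3[C_{l_b}]=\gy$.
\item You also need that a nonzero holomorphic $2$-form gives a nonzero class on the open surface; this fails in general (e.g.\ $dz\wedge dw$ on $\mbc^2$). The fix is that $p_{T_0}^\ast\sigma$ extends to $\tmcc_T$. Moreover, $H^{2,0}(\tmcc_T)\to H^2(\tmcc_{T_0},\mbc)$ is injective, because the kernel of restriction is spanned by classes of boundary curves.
\end{itemize}

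Second, and more seriously, \Cref{stratification} necessarily contains strata on which every $\mcy_b$ is singular.
\begin{itemize}
\item There $F(\mcy_b)$ is singular and possibly reducible, and $N\cong\Omega$ holds only on the smooth locus.
\item The forms $\omega_{w_b}$ are only meromorphic on a resolution, so neither the Albanese argument nor the ampleness argument is available.
\item Nothing in your proposal excludes that $C_{l_b}$ is tangent to $\ker\omega_{w_b}$ for all $b\in T_0$. That is exactly the condition for $p_T^\ast\sigma$ to vanish.
\end{itemize}
As it stands, even with the repairs above, your proposal proves (ii) only for curves $T_0$ whose general member $\mcy_b$ is smooth.
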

\begin{proof} 
	The first claim follows directly from \Cref{decomp_Pl}. We now prove (ii).	
	
	Let $T_0 \subset B$ be a one-dimensional subvariety. By construction, the family $\tmcc_{T_0} \ra T_0$ of curves is the simultaneous resolution of  $h_{T_0} \colon \mcc_{T_0} \ra T_0$, and the fibre $\mcc_b = h_{T_0}^{-1}(b)$ over $b \in T_0$ is the curve $C_{l_b} \subset F(\mcy_b)$ of lines on $\mcy_b$ that meets $l_b$.  Choose smooth compactifications $T_0 \subset T$ and $\tmcc_{T_0} \subset \tmcc_{T}$ so that the natural projection $p_{T_0} \colon \tmcc_{T_0} \ra F(X)$ extends to a proper morphism $p_{T} \colon \tmcc_{T} \ra F(X)$.
	Let $\ix Z_{T} \in \ch^3(X \times \tmcc_{T})$ be the pullback of $\ix Z_X$ (see \eqref{def_ZX}) via $\textup{id}_X \times p_{T} \colon X \times \tmcc_T \ra X \times F(X)$. The induced map $\ix [Z_T]_{\ast}$ in degree zero factors as
	\begin{equation*}
		\ix[Z_{T}]_{\ast} \colon H^0(\tmcc_{T},\mbz) \xrightarrow{(p_T)_{\ast}} H^4(F(X), \mbz) \xrightarrow{[\lx]_{\ast}} H^2(X, \mbz).
	\end{equation*}
	By construction, $\ix Z_T$ restricts to $\ix Z_{T_0}$ on $X \times \tmcc_{T_0}$. To see $\ix[Z_{T_0}] \in H^6(X \times \tmcc_{T_0}, \mbz)$ is nontrivial, it suffices to prove that $\ix[Z_{T}]_{\ast}[\tmcc_{T}] \neq 0$. 
	
	First, we verify that $(p_T)_{\ast}[\mathcal{C}_T] \neq 0$, i.e.\  $\text{im}(p_T)$ is a surface on $F(X)$. 	
	Suppose to the contrary that the family of curves $\mathcal{C}_{T_0} \to T_0$  is constant on $F(X)$.  Since the family $\mathcal{Y}_{T_0} \to T_0$ of hyperplane sections of $X$ is non-constant by \Cref{stratification},  
	it follows from \Cref{criterion_ccc_iff_ccs} and \Cref{equi_one_cycle} that the threefold 
	$D_F \coloneqq \{ [l] \in F(\mathcal{Y}_t) \mid t \in T_0 \} \subset F(X)$ 
	would be supported on a single curve $\mathcal{C}_b \subset F(X)$ up to rational equivalence of zero-cycles.
	By \cite[Cor.~1.2]{Voisin2016}, the orbit of a point on $F(X)$ under rational equivalence is a countable union of closed algebraic subsets of dimension at most two, so $D_F$ would be covered by a one-dimensional family of constant cycle surfaces. 
	Since every point on a constant cycle surface represents the canonical class $o_F \in \ch_0(F(X))$, the divisor $D_F$ would then be a constant cycle subvariety on $F(X)$, which is impossible. 
	Hence $\mathcal{C}_{T_0} \to T_0$ is nonconstant.

	We now show that $[\lx]_{\ast}[\textup{im}(p_T)] \neq 0$. Let $D \subset X$ be the locus swept out by the family of lines parametrized by the surface $\textup{im}(p_T)$. 
	If $\dim D = 3$, then $[\lx]_{\ast}[\textup{im}(p_T)] = n \cdot \hx$ for some nonzero integer $n$, and we are done. 
	If $\dim D = 2$, then $D$ would be a union of planes contained in $X$. 
	This would imply that $\dim \mcc_b = 2$ for all $b \in B$, contradicting the generality of the line $l_b \subset \mcy_b$ ensured by \Cref{stratification}. Thus $\ix[Z_{T}]_{\ast}[\tmcc_{T}] \neq 0$, completing the proof.
\end{proof}

\subsubsection{Proof of \Cref{main_thm_intro}} 
Let $U \subset \mbp = |\mso_X(1)|$ denote the locus of hyperplane sections without triple points, and let $U = \coprod_{i \in I} U_i$ be the finite stratification given by \Cref{stratification}. Fix a stratum $U_i$, and write $B \coloneqq V_i$ for the smooth  variety mapping onto $U_i$ with the properties stated in \Cref{stratification}. Denote by $\mcc_B \ra B$ the associated family of curves, and by $\tmcc_B \ra \mcc_B$  the simultaneous resolution, whose fibre over $b \in B$ is the normalization $\tmcc_b \ra \mcc_b$. 
By \Cref{cri_AJ_Cl}\ref{CH&AJ_tor}, for every integer $N>0$, 
$$\{b \in B \mid F(\mcy_b)\ \textup{is a constant cycle surface of order N} \} \subset \{b \in B \mid \ix^2 Z_b \in \ch^3(X \times \tmcc_b)[N]\}.$$
 Since the cycle $\ix^2  Z_B \in \ch^3(X \times \tmcc_B)$, viewed as a family of cycles over $B$, satisfies the  assumptions of \Cref{fini_tor} by \Cref{global_cycle_property}, the latter set  is  finite for every $N$, and hence so is the former. Finally, as the stratification is finite and $\mbp \setminus U$ is a finite set by \Cref{exclude_cone}, summing over all strata yields the desired finiteness. \qed

\section{Appendix: An alternative criterion for constant cycle surfaces} \label{appendix}
We present an alternative criterion for constant cycle surfaces on the Fano variety $F(X)$ of a smooth cubic fourfold $X$, applicable to all Lagrangian surfaces on $F(X)$ rather than just to Fano surfaces $F(Y)$ of hyperplane sections $Y \subset X$. However, this approach does not detect the order of a constant cycle surface, and therefore cannot be used to prove \Cref{main_thm_intro}.
\subsection{Chow-theoretic decomposition} \label{app_chow_cri} 
We fix the following notation. Let $\hx \in \ch^1(X)$ denote the hyperplane class,  $\gx \in \ch^1(F(X))$ the Plücker polarization, and $c_2 \coloneqq c_2(\mcs_F)$  the second Chern class of the universal subbundle $\mcs_F$ on $F(X)$.

For an integral surface $S  \subset F(X)$,  fix  a  resolution of singularities  $\ts \ra S$, and let $f \colon \ts \ra S \hookrightarrow F(X)$ be the composition with graph $\Gamma_f \subset \ts \times F(X)$. Let $\widetilde{P} \subset \ts  \times X$ and $\lx \subset F(X) \times X$ denote the incidence correspondences. Note that  $[\widetilde{P}] = [\lx] \circ [\Gamma_f]$ as correspondences.

The following observation refines the Chow-theoretic criterion   \Cref{equi_def_ccs}\ref{BS_decomp}, using the vanishing $\ch^2(X)_{\lhom} = 0$   \cite[Thm.~1]{Bloch-Srinivas83}.

\begin{lem} \label{lift_Fano_decomp_chow}
	Let $S \subset F(X)$ be an integral surface, and $f \colon \ts \ra F(X)$ be as above. Then $S$ is a constant cycle surface if and only if there exists an integer $N > 0$, such that
	\begin{equation*}
		N[\widetilde{P}^t] = N \cdot \frac{1}{3}\hx^3 \times [\ts] + \alpha_{2,1} + \alpha_{1,2} \in \ch^3(X \times \ts), 
	\end{equation*}
	where $\alpha_{2,1} \in \ch^2(X) \otimes \ch^1(\ts)$ and
	$\alpha_{1,2} = N \cdot \hx \times f^{\ast}(\frac{1}{3}(\gx^2 - c_2)) \in \ch^1(X) \otimes \ch^2(\ts)$.
\end{lem}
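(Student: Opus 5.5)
The converse direction is the easy one, so I would dispose of it first. If the decomposition holds, let $z \in \ch_0(\ts)$ be a point. Then $\alpha_{2,1}$ acts trivially on $z$: for $\beta \in \ch^1(\ts)$, $(\gamma \times \beta)_\ast z = \deg(\beta\cdot z)\,\gamma$ and $\beta\cdot z = 0$ for dimension reasons. Likewise $\alpha_{1,2}$ acts trivially, since $\ch^2(\ts)\cdot z = 0$. Hence $N[\widetilde P]_\ast z = N\cdot\frac13\hx^3$ in $\ch_1(X)$. Torsion-freeness of $\ch_1(X)$ \cite[Lem.~1.2]{ShenYin2020} gives $[\widetilde P]_\ast z = \frac13\hx^3$ for every point $z$, and \Cref{equi_one_cycle} then shows that $S$ is a constant cycle surface.

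For the forward direction I would start from \Cref{equi_def_ccs}\ref{BS_decomp}. This gives
\[ N[\Gamma_f^t] = N\, o_F\times[\ts] + Z, \]
with $Z$ supported on $F(X)\times D$ for a curve $D\subset\ts$. Composing with $[\lx]$ and using \Cref{dist_line_class} gives
\[ N[\widetilde P^t] = N\cdot\tfrac13\hx^3\times[\ts] + Z', \]
with $Z'$ supported on $X\times D$.

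The next step is to decompose $Z'$. Normalize $D$ to get $\widetilde D \to D$. Then $Z'$ is pushed forward from a class $W\in\ch^2(X\times\widetilde D)$, i.e.\ from a family of $2$-cycles on $X$ parametrized by a smooth curve. The key input is $\ch^2(X)_{\lhom}=0$ \cite[Thm.~1]{Bloch-Srinivas83} together with $\ch^2(X)\simeq H^{2,2}(X,\mbz)$, which is discrete. So the family $W$ is fibrewise constant in $\ch^2(X)$. By the Bloch--Srinivas argument applied to $X\times\widetilde D$ (decomposing over the generic point of $\widetilde D$ and then spreading out), we get
\[ W = \gamma\times[\widetilde D] + W_0, \]
with $\gamma\in\ch^2(X)$ and $W_0$ supported on $X\times(\text{finitely many points})$, possibly after multiplying by an integer. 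Such a $W_0$ lies in $\ch^1(X)\otimes\ch^1(\widetilde D)$, and since $\ch^1(X)=\mbz\hx$ it equals $\hx\times(\text{points})$. Pushing forward to $X\times\ts$ gives
\[ Z' = \alpha_{2,1} + \hx\times\xi, \qquad \xi\in\ch^2(\ts). \]

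The coefficient issue is that these Bloch--Srinivas steps can introduce a further integer multiple. I would absorb it into $N$, enlarging $N$ as needed, since the statement only asserts the existence of some $N$.

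The main obstacle is identifying $\xi$ as $N f^\ast(\frac13(\gx^2-c_2))$. I would pin it down by acting on $[\ts]$ and on $\ch^1$-classes, using the known action of the Fano correspondence. Concretely, apply both sides as correspondences to $\hx\in\ch^1(X)$, i.e.\ push forward $\hx\cdot(\,\cdot\,)$ to $\ts$:
\begin{itemize}
\item The left side gives $N f^\ast[\lx]^\ast \hx = N f^\ast\gx$, a class in $\ch^1$. That is the wrong degree, so I will instead use $\hx^2$.
\item The left side then gives $N f^\ast [\lx]^\ast \hx^2 = N f^\ast(\gx^2-c_2)$, the standard formula for lines meeting a codimension-$2$ linear section.
\item The term $\frac13\hx^3\times[\ts]$ contributes $\deg(\hx^5)/3\cdot$ (a class of the wrong dimension), hence $0$.
\item The term $\alpha_{2,1}$ contributes $\ch^1$-classes, again the wrong degree.
\item The term $\hx\times\xi$ contributes $\deg(\hx^3)\,\xi = 3\xi$.
\end{itemize}
Hence $3\xi = N f^\ast(\gx^2-c_2)$ in $\ch^2(\ts)$. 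To divide by $3$, I would use that $\gx^2-c_2$ is divisible by $3$ on $F(X)$, or more cheaply enlarge $N$ by a factor of $3$ and adjust. What remains to double-check is the torsion: $\xi$ is only determined up to $3$-torsion in $\ch_0(\ts)$, so I would multiply $N$ by $3$ once more to kill it. This yields the asserted shape of $\alpha_{1,2}$.
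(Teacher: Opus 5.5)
Your overall route is the paper's: Bloch--Srinivas for $[\Gamma_f^t]$, composition with $[\lx^t]$ using \Cref{dist_line_class}, splitting the boundary term over the normalization of $D$ via $\ch^2(X)_{\lhom}=0$, and then reading off $\alpha_{1,2}$ by letting both sides act on a class on $X$. The converse and the splitting step are fine. The final identification of $\xi$, however, fails as written, because your degree bookkeeping is off by one. Since $[\lx^t]\in\ch^3(X\times F(X))$ and $[\widetilde P^t]\in\ch^3(X\times\ts)$ with $\dim X=4$, both correspondences send $\ch^k(X)$ to $\ch^{k-1}$. Thus $[\lx^t]_*\hx=[F(X)]$ and $[\lx^t]_*\hx^2=\gx$, and it is $[\lx^t]_*\hx^3$ that equals $\gx^2-c_2$. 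With your test class $\hx^2$, the resulting identity lives in $\ch^1(\ts)$. The left side is $Nf^*\gx$. Writing $\alpha_{2,1}=\sum_j\gamma_j\times\beta_j$, this term contributes $\sum_j\deg(\gamma_j\cdot\hx^2)\,\beta_j$, which need not vanish. The term $\hx\times\xi$ contributes $p_{\ts*}(\hx^3\times\xi)=0$, not $3\xi$: the class $\hx^3$ is not a zero-cycle on the fourfold $X$, and the relevant degree is $\deg\hx^4=3$. So that equation does not see $\xi$ at all.

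The repair is to test against a codimension-$3$ class. With $\hx^3$ in place of $\hx^2$, your bullet points become correct: $\hx^6=0$, $\gamma_j\cdot\hx^3\in\ch^5(X)=0$, $\deg(\hx\cdot\hx^3)=3$, and the left side is $Nf^*(\gx^2-c_2)$. This yields $3\xi=Nf^*(\gx^2-c_2)$. A single further factor of $3$ in $N$ then removes the possible $3$-torsion in $\ch_0(\ts)$, which is harmless since the statement only asks for some $N$. The paper avoids this detour by acting on the integral class $\frac13\hx^3$ of a line. Then $\hx\times\xi$ contributes $\deg(\frac13\hx^4)\,\xi=\xi$, while the left side is $Nf^*[\lx^t]_*(\frac13\hx^3)=Nf^*(\frac13(\gx^2-c_2))$ by \cite[Lem.~A.5]{ShenVial2013FTofHK}. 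So $\xi$ is determined exactly, with no torsion ambiguity and no enlargement of $N$.
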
 
\begin{proof}
	The ``if" direction follows from the implication \ref{BS_decomp} $\Rightarrow$ \ref{ccs_def} in \Cref{equi_def_ccs}.
	Conversely, let $S \subset F(X)$ be a constant cycle surface of order $N^\prime$. Then  \Cref{equi_def_ccs} and \Cref{ord_alt_def} imply
	\begin{equation} \label{eq_BS_dec_step0}
		N^{\prime}[\Gamma_f^t] = N^{\prime} \cdot o_F \times [\ts] + Z^{\prime} \ \ \textup{in} \ \ch^4(F(X) \times \ts),
	\end{equation} where  $Z^{\prime}$ is a cycle supported on $F(X) \times D$ for some proper subset $D \subset \ts$. Composing \eqref{eq_BS_dec_step0} with $[\lx^t] \in \ch^3(X \times F(X))$ and using  $[\lx]_{\ast}(o_F) = \frac{1}{3}\hx^3 \in \ch_1(X)$ (see \Cref{dist_line_class}), we obtain  
	\begin{equation} \label{eq_BS_dec_step1}
		N^{\prime}[\widetilde{P}^t] = N^{\prime} \cdot \frac{1}{3}\hx^3 \times [\ts] + \alpha \in \ch^3(X \times \ts),
	\end{equation}
	where $\alpha$ is supported on $X \times D$ for some proper closed subset $D \subset \ts$.

	After multiplying by a suitable nonzero integer, the cycle $\alpha$ admits a further decomposition into product cycles. To see this, write $\alpha = \sum_{i} n_i[Z_i]$ as a sum of cycles of proper  subvarieties $Z_i \subset X \times \ts$, where each $Z_i$ is supported on $X \times D_i$ for some irreducible curve $D_i \subset \ts$. If the image of $Z_i \subset X \times \ts \ra \ts$ is a finite set of points, then  $[Z_i]$ is  a product cycle. Otherwise, the subvariety $Z_i \subset X \times D_i$ dominates $D_i$. Let $\nu_i \colon \widetilde{D}_i \ra D_i$ be the normalization, and denote by $\widetilde{Z}_i \subset X \times \widetilde{D}_i$ the pullback of $Z_i$.  The cycles in the family $\{ [\widetilde{Z}_{i,t}] \in \ch^2(X) \mid t \in \widetilde{D}_i\}$ are homologically equivalent. Since $\ch^2(X)_{\lhom} = 0$  \cite[Thm.~1]{Bloch-Srinivas83}, these cycles are constant. Hence, the image of $[\widetilde{Z}_{i}]_{\ast} \colon \ch^1(\widetilde{D}_i) \ra \ch^2(X)$ is generated by some class $\theta_i \in \ch^2(X)$. The Bloch--Srinivas principle applied to the correspondence $\widetilde{Z}_i \subset X \times \widetilde{D}_i$ yields 
	\begin{equation} \label{eq_BS_dec_tilde}
		N_i [\widetilde{Z}_i] = N_i \cdot \theta_i \times [\widetilde{D}_i] + \widetilde{\beta}_i \in \ch^2(X \times \widetilde{D}_i),
	\end{equation}
	where $N_i$ is a nonzero integer and $\widetilde{\beta}_i$ lies in the image of $\ch^1(X) \otimes \ch^1(\widetilde{D}_i) \ra \ch^2(X \times \widetilde{D}_i)$. Pushing forward \eqref{eq_BS_dec_tilde} along the morphism
	$X \times \widetilde{D}_i \xrightarrow{\textup{id}_{X} \times \nu_i} X \times D_i \subset X \times \ts$
	gives	\begin{equation}\label{eq_BS_dec_step2}
		N_i [Z_i] = N_i \cdot \theta_i \times [D_i] + \beta_i \in \ch^3(X \times \ts),
	\end{equation}
	where $\beta_i$ lies in the image of $\ch^1(X) \otimes \ch^1(D_i) \ra \ch^3(X \times \ts)$. Multiplying  \eqref{eq_BS_dec_step1} by the product of the integers $N_i$ and substituting \eqref{eq_BS_dec_step2} for each $i$, we obtain a  refined decomposition
	\begin{equation*}
		N[\widetilde{P}^t] = N \cdot \frac{1}{3}\hx^3 \times [\ts] + \alpha_{2,1} + \alpha_{1,2} \in \ch^3(X \times \ts), 
	\end{equation*}
	where $\alpha_{2,1}$ lies in the image of $\ch^2(X) \otimes \ch^1(\ts) \ra \ch^3(X \times \ts)$, and
	$\alpha_{1,2}$ lies in the image of $\ch^1(X) \otimes \ch^2(\ts) \ra \ch^3(X \times \ts)$. By construction, the integer $N$ is divisible by the order of the constant cycle surface $S \subset F(X)$.
	
	It remains to determine the class $\alpha_{1,2}$. Since $\ch^1(X) \simeq \mbz \cdot  \hx$,  one can write $\alpha_{1,2} = \hx \times \beta$ for some $\beta \in \ch^2(\ts)$. Using the factorization 
	\begin{equation*}
		[\widetilde{P}^t]_{\ast} \colon \ch_1(X) \xrightarrow{[\lx^t]_{\ast}} \ch_2(F(X)) \xrightarrow{[\Gamma_f^t]_{\ast}} \ch_0(\ts)
	\end{equation*} and the identity  $[\lx^t]_{\ast}(\frac{1}{3}\hx^3) = \frac{1}{3}(\gx^2 - c_2)$  (see \cite[Lem.~A.5]{ShenVial2013FTofHK}),  we find 
	\begin{equation*}
		\beta  =  N[\widetilde{P}^t]_{\ast}(\frac{1}{3}\hx^3) = N \cdot  f^{\ast}(\frac{1}{3}(\gx^2 - c_2)) \in \ch^2(\ts).
	\end{equation*} This concludes the proof.
\end{proof}

\begin{rmk} \label{problem_coeff}
It seems that the minimal integer $N$, for which a decomposition into product cycles as above exists, cannot be bounded above by a constant multiple of the order of the constant cycle surface, even when $S$ is the Fano surface of a hyperplane section $Y \subset X$. 
\end{rmk}

\subsection{Abel--Jacobi criterion} \label{aj_map}
Let $S \subset F(X)$ be an integral surface with a fixed resolution of singularities $\ts \ra S$. In analogy with \Cref{AJ_Cl}, we construct a cycle $\zs \in \ch^3(X \times \ts)_{\lhom}$ by modifying the correspondence  $[\widetilde{P}^t]$, and derive a criterion for $S$ to be a constant cycle surface in terms of  the Abel--Jacobi image of $\zs$.

We fix the setup. Consider the Abel--Jacobi map
\begin{equation} \label{aj_def_S}
\Phi \colon \ch^3(X \times \ts)_{\lhom} \ra J^5(X \times \ts) = \frac{\big(F^4H^7(X \times \ts, \mbc)\big)^{\ast}}{H_7(X \times \ts, \mbz)}. 
\end{equation}
Since $X$ has no nontrivial odd cohomology and $H^6(X,\mbz) \simeq \mbz$, we have
\begin{align*}
J^5(X \times \ts)  
&\simeq \frac{\big(F^4(H^6(X,\mbc) \otimes H^1(\ts,\mbc))\big)^{\ast}}{H_6(X,\mbz)\otimes H_1(\ts, \mbz)}
\oplus \frac{\big(F^4(H^4(X,\mbc) \otimes H^3(\ts,\mbc))\big)^{\ast}}{H_4(X,\mbz)\otimes H_3(\ts, \mbz)}\\
& \simeq \textup{Alb}(\ts) \oplus \frac{(H^{2,2}(X) \otimes H^{2,1}(\ts) \oplus H^{3,1}(X) \otimes H^3(\ts, \mbc))^{\ast}}{H_4(X, \mbz) \otimes H_3(\ts, \mbz)}.
\end{align*}
Let $\rho(X)$ denote the rank of $H^{2,2}(X,\mbz)$, and let $H_{\rm{tr}}^4(X,\mbz)$ and   $H_{\rm{tr}}^{2,2}(X)$ be as in \eqref{eq_tr_coh}. Then the algebraic part and the transcendental part of $J^5(X \times \ts)$ are given respectively by
\begin{align*}
	J_{\rm{alg}}^5(X \times \ts) &\simeq (H^{3,3}(X,\mbz) \otimes \textup{Alb}(\ts)) \oplus (H^{2,2}(X,\mbz) \otimes J(\ts)) 
	\simeq \textup{Alb}(\ts) \oplus J(\ts)^{\oplus \rho(X)},\\
	J_{\rm{tr}}^5(X \times \ts) &\coloneqq \frac{J^5(X \times \ts)}{J_{\rm{alg}}^5(X \times \ts)} = \frac{(H^{3,1}(X) \otimes H^3(\ts,\mbz) \oplus H_{\rm{tr}}^{2,2}(X) \otimes H^{2,1}(\ts))^{\ast}}{H_{\rm{tr}}^4(X,\mbz) \otimes H^1(\ts, \mbz)}.
\end{align*}

Since algebraic and homological equivalence coincide for  cycles on a smooth surface,  cycles in the image of the natural map
\begin{equation*}
	G_{\lhom} \coloneqq \ch^2(X) \otimes \ch^1(\ts)_{\lhom} \oplus \ch^1(X) \otimes \ch^2(\ts)_{\lhom} \ra  \ch^3(X \times \ts)
\end{equation*} are algebraically equivalent to zero, and hence their image under  $\Phi$ is contained in  $J_{\rm{alg}}^5(X \times \ts)$. 
We have the following commutative diagram with exact rows:
\begin{equation}
\begin{tikzcd}
& &G_{\lhom} \arrow[r] \arrow[d,"\Phi_{\textup{alg}}"] &\ch^3(X \times \ts)_{\lhom} \arrow[r] \arrow[d,"\Phi"] &\frac{\ch^3(X \times \ts)_{\lhom}}{G_{\lhom}} \arrow[r] \arrow[d] &0\\
&0 \arrow[r] &J_{\textup{alg}}^5(X \times \ts) \arrow[r] &J^5(X \times \ts) \arrow[r, "p_{\textup{tr}}"] &J_{\textup{tr}}^5(X \times \ts) \arrow[r] & 0.
\end{tikzcd}
\end{equation}
By construction, the  induced map $\Phi_{\rm{alg}}$ is essentially the direct sum of the Albanese map $\Phi_{\ts} \colon \ch^2(\ts)_{\lhom} \ra \textup{Alb}(\ts)$ and $\rho(X)$ copies of the isomorphism $\ch^1(\ts)_{\lhom} \simeq J(\ts)$ given by the Abel--Jacobi map.  As in \Cref{AJ_Cl}, we call 
 \begin{equation} \label{tr_AJ_def_S}
 	\Phi_{\rm{tr}} \coloneqq p_{\rm{tr}} \circ \Phi \colon \ch^3(X \times \ts)_{\rm{hom}} \ra J_{\rm{tr}}^5(X \times \ts)
 \end{equation} the \textit{transcendental Abel--Jacobi map}. 

We now  modify  $[\widetilde{P}^t]$ by product cycles to obtain a cohomologically trivial cycle.  Let 
\begin{equation}\label{Q_basis}
	\left\{[S_i] \in \ch^2(X) \mid 1 \leqslant i \leqslant \rho \right\}
\end{equation} be an orthogonal $\mbq$-basis of $\ch^2(X)_{\mbq}$ as in \eqref{def_basis}.  Then $\oplus_{i=1}^{\rho} \mbz [S_i] \subset \ch^2(X) \simeq H^{2,2}(X,\mbz)$ is a sublattice of finite index. Write $[C_i] \coloneqq [\widetilde{P}^t]_{\ast}[S_i] \in \ch^1(\ts)$. 
Consider the following   cycle 
\begin{equation} \label{modified_corr}
\zs \coloneqq [\widetilde{P}^t] - \frac{1}{3}\hx^3 \times [\ts] - \sum_{i} \frac{1}{[S_i]^2}[S_i] \times [C_i]-  \hx \times \frac{1}{3} f^{\ast}(\gx^2 - c_2) \in \ch^3(X \times \ts)_{\mbq},
\end{equation} 
where $f \colon \ts \ra F(X)$ is the natural morphism.
The integer $\ix \coloneqq \prod_{i=1}^{\rho}[S_i]^2$ is independent of the surface $S \subset F(X)$. By  construction, $\ix \zs  \in \ch^3(X \times \ts)$. 

Note that when $S = F(Y)$ is the Fano surface of a smooth hyperplane section $Y \subset X$, and $l \subset Y$ is a non-special line (so that the curve $C_l$ is also  smooth),  the class $\ix\zs \in \ch^3(X \times S)$ pulls back to $\ix Z_l \in \ch^3(X \times C_l)_{\lhom}$ (see \eqref{def_Zl}) via the closed embedding $X \times C_l \subset X \times F(Y)$. 

As in  \Cref{decomp_Pl}, we have the following:

\begin{lem} \label{construct_coh_0}
Suppose that the surface $S \subset F(X)$ is Lagrangian. Then the  cycle $\ix^2 \zs \in \ch^3(X \times \ts)$ is cohomologically trivial.
\end{lem}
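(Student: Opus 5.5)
We show that the class $[\ix^2\zs]\in H^6(X\times\ts,\mbz)$ vanishes by computing it one Künneth component at a time. Since $X$ has no odd cohomology, $H^6(X\times\ts,\mbz)$ modulo torsion decomposes as
\[
H^6(X)\otimes H^0(\ts)\oplus H^4(X)\otimes H^2(\ts)\oplus H^2(X)\otimes H^4(\ts),
\]
where $H^{\mathrm{odd}}(\ts)$ does not contribute. It therefore suffices to show that the correspondence $[\ix^2\zs]_\ast\colon H^{2k}(X,\mbz)\to H^{2k-2}(\ts,\mbz)$ vanishes for $k=1,2,3$. Torsion in $H^\ast(\ts,\mbz)$ is the one place where extra care is needed; I would handle it by arguing with rational coefficients and then using the integrality of $\ix\zs$, or else by accepting the statement modulo torsion, which is all the Abel--Jacobi map needs.

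For $k=3$, the map sends the class of a line, $\tfrac13\hx^3$, to a zero-cycle on $\ts$. The term $[\widetilde P^t]_\ast$ sends $H^6(X)$ to $[\Gamma_f^t]_\ast[\lx^t]_\ast(\tfrac13\hx^3)=f^\ast(\tfrac13(\gx^2-c_2))$, using \cite[Lem.~A.5]{ShenVial2013FTofHK} as in the proof of \Cref{lift_Fano_decomp_chow}. The last term of \eqref{modified_corr} contributes exactly minus this, because $\deg(\hx\cdot\tfrac13\hx^3)=1$. The first product term contributes $\deg(\hx^3\cdot\tfrac13\hx^3)$ times a class in $H^{-2}(\ts)=0$, and the middle terms contribute zero because $\hx^3\cdot[S_i]$ is a $1$-cycle on $X$, which has degree zero as a cohomology class paired against $X$ alone. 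So this component vanishes.

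For $k=2$, the computation is the same as in \Cref{decomp_Pl}. Apply the map to each basis element $[S_j]$. The term $[\widetilde P^t]_\ast$ gives $[C_j]$, and orthogonality of the basis $\{[S_i]\}$ gives $\sum_i\tfrac{1}{[S_i]^2}([S_i]\cdot[S_j])[C_i]=[C_j]$, so these two cancel. The term $\hx^3\times[\ts]$ contributes nothing in this degree. The term $\hx\times\tfrac13 f^\ast(\gx^2-c_2)$ also contributes nothing in this degree, since it would land in $H^{4-2+\dots}$ with the wrong bidegree. Because the $[S_j]$ span $H^4(X,\mbq)$ only up to the transcendental part $H^4_{\rm tr}$, I also have to check $H^4_{\rm tr}$. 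Every product term kills $H^4_{\rm tr}$, since it is orthogonal to algebraic classes, so it remains to show $[\widetilde P^t]_\ast$ kills $H^4_{\rm tr}$. \textbf{This is where the Lagrangian hypothesis enters, and it is the main obstacle.} The Fano correspondence $[\lx^t]_\ast\colon H^4(X)\to H^2(F(X))$ is the Beauville--Donagi isomorphism onto primitive cohomology and sends $H^{3,1}(X)$ to $H^{2,0}(F(X))=\mbc\sigma$. Restricting to $S$ gives $f^\ast\sigma=0$ by the Lagrangian assumption. Hence $[\widetilde P^t]_\ast$ kills the rational sub-Hodge structure generated by $H^{3,1}$, which is the transcendental lattice. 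Thus $[\widetilde P^t]_\ast$ factors through the algebraic part, and its image of $T(X)_\mbq$ is zero.

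For $k=1$, I apply the map to $\hx$. The term $[\widetilde P^t]_\ast\hx$ lands in $H^0(\ts)$ with coefficient $\deg(\hx\cdot\text{line})=1$. The last term subtracts $\deg(\hx\cdot\hx)$ as a pairing on $X$, which is not defined in the right degree, so I will recheck the bookkeeping here: the correct pairing for a $(1,2)$-product against $H^2(X)$ lands in $H^{2}(\ts)$, not $H^0$. I must therefore match the $H^2(X)\otimes H^4(\ts)$ component against $[\widetilde P^t]$ evaluated on the dual class, which is the $k=3$ computation read transposed. The remaining $H^6(X)\otimes H^0(\ts)$ component is handled by the term $\tfrac13\hx^3\times[\ts]$, since every line has class $\tfrac13\hx^3$.

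Finally, multiplying by $\ix$ clears denominators, which makes $\ix\zs$ integral. The second factor of $\ix$ absorbs the finite index of $\oplus\mbz[S_i]\oplus T(X)$ inside $H^4(X,\mbz)$. This is needed so that vanishing on that sublattice implies integral vanishing of the $H^4\otimes H^2$ component, and it explains why the statement is for $\ix^2$ rather than $\ix$.
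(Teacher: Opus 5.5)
Your argument is correct and is essentially the paper's proof. It has the same ingredients: the Künneth reduction to the action on $H^{2k}(X,\mbz)$ for $k=1,2,3$; the orthogonal basis together with the Lagrangian condition via $[\lx^t]_\ast H^{3,1}(X)=H^{2,0}(F(X))$ for $k=2$; \cite[Lem.~A.5]{ShenVial2013FTofHK} for $k=3$; and the extra factor of $\ix$ to pass from $\oplus_i\mbz[S_i]\oplus T(X)$ to $H^4(X,\mbz)$. The Künneth reduction holds integrally, so no torsion hedge is needed at that step, because $H^\ast(X,\mbz)$ is torsion-free and unimodular.

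Your $k=1$ bookkeeping is muddled, but the conclusion survives. The term $\hx\times\frac13 f^\ast(\gx^2-c_2)$ acts by zero on $H^2(X)$; it only sees $H^6(X)$, i.e.\ your $k=3$ case. The cancellation on $\hx$ is simply $[\widetilde P^t]_\ast\hx=f^\ast[F(X)]=[\ts]=(\frac13\hx^3\times[\ts])_\ast\hx$, using $\deg\hx^4=3$.
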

\begin{proof}
Note that for each $0 \leqslant k \leqslant 4$, the group $H^{2k}(X,\mbz)$ is torsion-free and unimodular, i.e.\ $H^{2k}(X, \mbz) \simeq H^{8-2k}(X, \mbz)^{\vee}$ (see e.g.\ \cite[Sec.~1.1.5]{Huybrechts_cubicbook}). 
By the Künneth decomposition, it therefore suffices to show  that  $\ix^2[\zs]_{\ast} \colon H^{2k}(X,\mbz) \ra H^{2k-2}(\ts,\mbz)$ vanishes for $1 \leqslant k \leqslant 3$. As in \Cref{decomp_Pl}, we compute
\begin{equation*}
[\widetilde{P}^t]_{\ast} \colon	H^{2k}(X, \mbz) \xrightarrow{[\lx^t]_{\ast}}  H^{2k-2}(F(X), \mbz) \xrightarrow{f^{\ast}} H^{2k-2}(\ts, \mbz), \ k = 1,2,3.
\end{equation*}

For $k = 1$,  since $[\widetilde{P}]_{\ast}(\hx) = f^{\ast}[F(X)] = [\ts] \in H^0(\ts,\mbz)$, the map $\ix[\zs]_{\ast}$ vanishes on $H^2(X,\mathbb{Z}) \simeq \mbz \cdot \hx$.

For $k = 2$, the map $[\lx^t]_{\ast}$ induces an isomorphism $H^{3,1}(X) \simeq H^{2,0}(F(X))$.  Since $S \subset F(X)$ is  Lagrangian, the induced map $\ix[\zs]_{\ast}$ (on $H^4(X, \mbc)$) vanishes on $H^{3,1}(X)$, and hence on the transcendental lattice $T(X)$, i.e.\ the smallest saturated sub-Hodge structure of $H^4(X,\mbz)$ whose complexification contains $H^{3,1}(X)$. 
We now show  that $\ix^2[\zs]_{\ast}$ also vanishes on ${H^{2,2}(X,\mbz)}$. By construction, for each class $[S_i] \in \ch^2(X) \simeq H^{2,2}(X,\mbz)$ in the $\mbq$-basis of $\ch^2(X)_{\mbq}$ chosen in \eqref{Q_basis}, one has $\ix[\zs]_{\ast}[S_i] = 0$ in $H^2(\ts,\mbz)$. Since $\ix H^{2,2}(X,\mbz) \subset \oplus_{i=1}^{\rho} \mbz [S_i]$, it follows that  $\ix^2[\zs]_{\ast}$  vanishes on $H^{2,2}(X,\mbz)$. After replacing  $\ix$ by a suitable multiple that is independent of the surface $S$, e.g.\  the index of the sublattice $T(X) \oplus H^{2,2}(X,\mbz) \subset H^{4}(X,\mbz)$, the map $\ix^2[\zs]_{\ast}$ is zero on $H^4(X, \mbz)$.

Finally, for $k = 3$, we have $H^6(X, \mbz) \simeq \mbz \cdot \frac{1}{3}\hx^3$. Since $[\widetilde{P}^t]_{\ast}(\frac{1}{3}\hx^3) = f^{\ast}(\frac{1}{3} (\gx^2 - c_2))$ (see \cite[Lem.~A.5]{ShenVial2013FTofHK}), it follows that the map $\ix[\zs]_{\ast}$ is  trivial on $H^6(X,\mbz)$. This completes the proof.
\end{proof}

\begin{rmk}
The Lagrangian assumption is necessary to ensure that $\ix[\zs]_{\ast}$ is trivial on the transcendental lattice $T(X)$. This assumption is satisfied, for example, when $S$ is the Fano surface of a cubic threefold $Y \subset X$  \cite[Ex.~3(7)]{Voisin_1992}, or when $S$ is a constant cycle surface (see \Cref{ccs_is_Lag}).
\end{rmk}

From \Cref{lift_Fano_decomp_chow} and the discussion above, we obtain the following analogue of  \Cref{cri_AJ_Cl}.
\begin{cor} \label{bridge}
Let $S \subset F(X)$ be an integral Lagrangian surface with a fixed resolution of singularities $\ts \ra S$, and let $\ix^2 \zs \in \ch^3(X \times \ts)_{\lhom}$ be the associated cycle as in \eqref{modified_corr}.

If $S$ is a constant cycle surface on $F(X)$, then $\ix^2\zs$ is torsion. In particular, both $\Phi(\ix^2\zs) \in J^5(X \times \ts)$  and $\Phi_{\rm tr}(\ix^2\zs) \in J_{\rm tr}^5(X \times \ts)$ are  torsion (see \eqref{aj_def_S} and \eqref{tr_AJ_def_S}). 
\end{cor}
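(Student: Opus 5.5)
The plan is to mimic the proof of \Cref{cri_AJ_Cl}\ref{CH&AJ_tor}, with \Cref{lift_Fano_decomp_chow} in place of \Cref{ccs_decomp_curve}. Suppose $S$ is a constant cycle surface. \Cref{lift_Fano_decomp_chow} gives an integer $N>0$ with
\begin{equation*}
N[\widetilde{P}^t] = N \cdot \tfrac{1}{3}\hx^3 \times [\ts] + \alpha_{2,1} + N\cdot \hx \times f^{\ast}\big(\tfrac{1}{3}(\gx^2 - c_2)\big) \quad \textup{in } \ch^3(X \times \ts),
\end{equation*}
where $\alpha_{2,1}$ lies in the image of $\ch^2(X)\otimes \ch^1(\ts)$. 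Substituting this into the definition \eqref{modified_corr} of $\zs$ and multiplying by $\ix$, the terms $\tfrac{1}{3}\hx^3\times[\ts]$ and $\hx\times \tfrac13 f^\ast(\gx^2-c_2)$ cancel. What remains is
\begin{equation*}
N\ix\zs = \ix\alpha_{2,1} - N\sum_i \tfrac{\ix}{[S_i]^2}[S_i]\times[C_i],
\end{equation*}
an element of the image of $\ch^2(X)\otimes\ch^1(\ts)$.

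Next, I use $\ix\ch^2(X)\subset\oplus_i\mbz[S_i]$ (recall $\ch^2(X)\simeq H^{2,2}(X,\mbz)$) to rewrite
\begin{equation*}
N\ix^2\zs=\sum_{i=1}^{\rho}[S_i]\times\alpha_i, \qquad \alpha_i\in\ch^1(\ts).
\end{equation*}
To determine the $\alpha_i$, I apply the correspondence action $(\cdot)_\ast$ to $[S_j]$. By orthogonality of the basis, the right-hand side sends $[S_j]$ to $[S_j]^2\alpha_j$. For the left-hand side, I compute $[\zs]_\ast[S_j]$ directly in $\ch^1(\ts)_{\mbq}$:
\begin{itemize}
\item the term $[\widetilde{P}^t]$ contributes $[C_j]$;
\item the product term $\tfrac{1}{[S_j]^2}[S_j]\times[C_j]$ contributes $-[C_j]$, and the other product terms $[S_i]\times[C_i]$, $i\ne j$, contribute nothing by orthogonality;
\item the term $\tfrac13\hx^3\times[\ts]$ contributes nothing, since $\hx^3\cdot[S_j]$ is a zero-cycle times $[\ts]$, which lands in $\ch^0$ rather than $\ch^1$;
\item the term $\hx\times f^\ast(\cdot)$ contributes $\deg(\hx\cdot[S_j])\,f^\ast(\cdot)$, which lies in $\ch^2(\ts)$, not $\ch^1(\ts)$.
\end{itemize}
Hence $[\zs]_\ast[S_j]=0$, so $[S_j]^2\alpha_j=0$ in $\ch^1(\ts)$.

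The main point to check is that $\ch^1(\ts)$ is torsion-free. This holds because $\ts$ is a smooth projective surface, so $\ch^1(\ts)=\pic(\ts)$; here $\ts$ is regular with $H^1(\ts,\mbz)$ torsion possibly nonzero, but $\pic^0$ is divisible and the Néron–Severi torsion is detected cohomologically. If torsion causes trouble, note that we only need torsion-ness of $\ix^2\zs$, not vanishing. Indeed, $[S_j]^2\alpha_j=0$ already gives $\ix\alpha_j=0$ up to torsion, and any torsion in $\alpha_j$ only makes $\ix^2\zs$ torsion with a larger exponent. So in all cases $\ix^2\zs$ is torsion in $\ch^3(X\times\ts)$, and it is cohomologically trivial by \Cref{construct_coh_0}. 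Applying $\Phi$, and then $p_{\rm tr}$, to this torsion class shows that $\Phi(\ix^2\zs)$ and $\Phi_{\rm tr}(\ix^2\zs)$ are torsion.
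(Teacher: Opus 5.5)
Your proposal is essentially the paper's proof: substitute the decomposition from \Cref{lift_Fano_decomp_chow} into \eqref{modified_corr}, rewrite $N\ix\zs$ as $\sum_i [S_i]\times\alpha_i$, and evaluate on $[S_j]$ using orthogonality to get $[S_j]^2\alpha_j=0$ in $\ch^1(\ts)$, hence $\ix\alpha_j=0$ and $\ix^2\zs$ is torsion. Two small corrections: first, $\ch^1(\ts)=\pic(\ts)$ is not torsion-free in general, since $\pic^0(\ts)$ is a positive-dimensional abelian variety (of dimension $5$ when $S=F(Y)$ with $Y$ smooth), but as you observe this does not matter, because $[S_j]^2\alpha_j=0$ already says $\alpha_j$ is $\ix$-torsion; second, the $\hx^3$- and $\hx$-terms of $\zs$ act trivially on $[S_j]$ for dimension reasons rather than the ones you give, namely $\hx^3\cdot[S_j]\in\ch^5(X)=0$, and $\hx\cdot[S_j]$ is a $1$-cycle, so its product with any class on $\ts$ pushes forward to zero.
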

\begin{proof} 
By \Cref{lift_Fano_decomp_chow}, there exists an integer $N$ such that	\begin{equation} \label{ccs_dec_1}
N[\widetilde{P}^t] = N \cdot \frac{1}{3}\hx^3 \times [\ts] +   \alpha_{2,1} + N  \cdot \hx \times f^{\ast}(\frac{1}{3}(\gx^2 - c_2)) \ \textup{in} \  \ch^3(X \times \ts), 
\end{equation}
where $\alpha_{2,1}$ lies in the image of $\ch^2(X) \otimes \ch^1(\ts) \ra \ch^3(X \times \ts)$.  
Multiplying \eqref{ccs_dec_1} by the integer $\ix = \prod_{i =1}^{\rho} [S_i]^2$ and applying \eqref{modified_corr} yields 
\begin{equation} \label{eq_Ntor}
N \cdot (\ix \zs) = \ix  \cdot \alpha_{2,1} - N \sum_{i}  \frac{\ix}{[S_i]^2}[S_i] \times [C_i]  \ \textup{in} \ \ch^3(X \times \ts).
\end{equation}

Since $\ix \ch^2(X) \subset \oplus_{i=1}^{\rho} \mbz [S_i]$,  we may write $N\ix  \zs =   \sum_{i = 1}^{\rho} [S_i] \times \beta_i \in \ch^3(X \times \ts)$ with $\beta_i \in \ch^1(\ts)$. By construction (see \eqref{modified_corr}) we have  $$0 = N \ix [\zs]_{\ast}[S_i] = [S_i]^2 \cdot  \beta_i  \in \ch^1(\ts).$$ Therefore, each $\beta_i$ is $\ix$-torsion, and $N  \ix^2 \zs = 0$ in $\ch^3(X \times \ts)$. Consequently, both $\Phi(\ix^2 \zs) \in J^5(X \times \ts)$ and its projection to $J_{\rm tr}^5(X \times \ts)$ are $N$-torsion.
\end{proof}

\begin{rmk} \label{problem_app} 
Unlike \Cref{cri_AJ_Cl}, when $S$ is a constant cycle surface, it is unclear whether  $\Phi(\ix^2\zs) \in J^5(X \times \ts)$  is annihilated by $\ord(S)$ up to a constant multiple independent of $S$; see \Cref{problem_coeff}. As $S$ varies in the family of Fano surfaces $F(Y) \subset F(X)$, the cycles $\ix^2\zs$ still define a normal function $\nu$. However, there may be no nonzero integer $M$, independent of $S$, for which the locus of constant cycle Fano surfaces of order $N$ is  contained in the torsion locus $Z(N M \cdot \nu)$. This explains why the strategy of \Cref{fin_ccc} cannot be applied to  \Cref{main_thm_intro} via \Cref{bridge}.
\end{rmk}

\medskip 

\printbibliography

{\itshape
	\noindent
	\textsc{Mathematisches Institut, Universit\"at Bonn, Endenicher Allee 60, 53115 Bonn, Germany.}\\
	E-mail address: \href{mailto:jxhuang@math.uni-bonn.de}{jxhuang@math.uni-bonn.de}
}
\end{document}